\definecolor{ududff}{rgb}{0.30196078431372547,0.30196078431372547,1}
\newtheorem{theorem}{Theorem}[section]
\newtheorem{lemma}[theorem]{Lemma}
\newtheorem{proposition}[theorem]{Proposition}
\newtheorem*{ThmA}{Theorem A}
\newtheorem*{ThmB}{Theorem B}
\newtheorem*{ThmC}{Theorem C}
\newtheorem*{ThmD}{Theorem D}
\theoremstyle{definition}
\newtheorem{remark}[theorem]{Remark}
\newenvironment{enumeratei}{\begin{enumerate}[\upshape (a)]}
	{\end{enumerate}}
\def\cent#1#2{{\bf C}_{#1}(#2)}
\def\oh#1#2{{\bf O}_{#1}(#2)}
\def\zent#1{{\bf Z}(#1)}
\def\fit#1{{\bf F}(#1)}
\def\frat#1{{\bf \Phi}(#1)}
\newcommand{\F}{{\mathbb F}}
\def\cent#1#2{{\bf C}_{#1}(#2)}
\def\norm#1#2{{\bf N}_{#1}(#2)}
\def\oh#1#2{{\bf O}_{#1}(#2)}
\def\zent#1{{\bf Z}(#1)}
\def\fit#1{{\bf F}(#1)}
\def\cent#1#2{{\bf C}_{#1}(#2)}
\def\oh#1#2{{\bf O}_{#1}(#2)}
\def\zent#1{{\bf Z}(#1)}
\def\norm#1#2{{\bf N}_{#1}(#2)}
\mathchardef\coso="2023
\begin{document}
	
\title[Equal order pairs]{Group cosets with all elements of equal order}
	
	%\date{april 2023}

\author[A. R. Camina]{A. R. Camina}
\address{Alan R. Camina,  School of Engineering, Mathematics and Physics, University of East Anglia, Norwich,  NR4 7TJ, United Kingdom.}
\email{alan.camina@gmail.com}
	
\author[R. D. Camina]{R. D. Camina}
\address{Rachel D. Camina, Fitzwilliam College, Cambridge, CB3 0DG, United Kingdom.}
\email{rdc26@cam.ac.uk}	
	
\author[M. L. Lewis]{Mark L. Lewis}
\address{Mark L. Lewis, Department of Mathematical Sciences, Kent State University, Kent, OH 44266, USA.}
\email{lewis@math.kent.edu}

\author [E. Pacifici]{E. Pacifici}
\address{Emanuele Pacifici, Dipartimento di Matematica e Informatica U. Dini, Universit\`a degli Studi di Firenze, Viale Morgagni 67/A, 50134 Firenze, Italy.}
\email{emanuele.pacifici@unifi.it}

\author [L. Sanus]{L. Sanus}
\address{Lucia Sanus, Departament de Matem\`atiques, 
Universitat de Val\`encia,
46100 Burjassot, Val\`encia, Spain. }
\email{lucia.sanus@uv.es}

\author [M. Vergani]{M. Vergani}
\address{Marco Vergani, Dipartimento di Matematica e Informatica U. Dini, Universit\`a degli Studi di Firenze, Viale Morgagni 67/A, 50134 Firenze, Italy.}
\email{marco.vergani@unifi.it}

\subjclass[2010]{Primary 20E34}
\keywords{Camina pairs, cosets, normal subgroup}

\thanks{The fourth and the sixth authors are partially supported by INdAM-GNSAGA and by the European Union-Next Generation EU, Missione 4 Componente 1, CUP B53D23009410006, PRIN 2022 2022PSTWLB - Group Theory and Applications. The research of the fifth  author is supported by Ministerio de Ciencia e Innovaci\'on (PID2022-137612NB-I00 funded by MCIN/AEI/10.13039/501100011033 and `ERDF A way of making Europe')}
\thanks{This article was initiated while some of the authors were visiting the Dipartimento di Matematica e Informatica at the Università degli Studi di Firenze. We would like to thank the department for its hospitality.}	

%\author[Camina et al]{A.R. Camina, R.D. Camina, M.L. Lewis, E. Pacifici, L. Sanus, M. Vergani}

\begin{abstract}
Let $G$ be a finite group and $N$ a proper, nontrivial, normal subgroup of $G$. If, for every element $x$ of $G$ not lying in $N$, the elements in the coset $xN$ all have the same order as $x$, then we say that $(G,N)$ is an {\it{equal order pair}}. This generalizes the concept of a Camina pair, that was introduced by the first author. In the present paper we study several properties of equal order pairs, showing that in many respects they resemble Camina pairs, but with some important differences.
\end{abstract}
	
\maketitle
	
\section{Introduction}\label{Sect 1}

Every group considered in this paper is assumed to be a finite group. Let $G$ be a group and let $N$ be a proper, nontrivial, normal subgroup of $G$: the pair $(G,N)$ is called a {\it{Camina pair}} if, for every element $x$ of $G$ not lying in $N$, the coset $xN$ is contained in the conjugacy class of $x$ in $G$. Camina pairs are introduced in a paper by the first author (\cite{camina}) and then extensively studied by several researchers; it is known that, if $(G,N)$ is a Camina pair, then either $G$ is a Frobenius group with kernel $N$ (which happens if and only if the orders of $N$ and $G/N$ are coprime) or at least one among $N$ and $G/N$ is a group of prime-power order (see \cite[Theorem~2 and Proposition~1]{camina}). Moreover, D. Chillag, A. Mann and C. Scoppola prove in \cite[Lemma~2.1]{CMS} that if $(G,N)$ is a Camina pair where $G/N$ is solvable, then $G$ itself is solvable. In particular, if $(G,N)$ is any Camina pair then $N$ is solvable. We refer the reader to the survey \cite{marksurvey} for an exposition of many known results concerning Camina pairs, where also some generalizations are discussed. One interesting extension of the concept of a Camina pair is treated in a recent paper by Z. Akhlaghi and A. Beltr\'an (\cite{beltran}); in that paper the authors study the case when every nontrivial coset $xN$ is contained in the union of the conjugacy class of $x$ and that of $x^{-1}$.
	
Note that  if $(G,N)$ is a Camina pair, or if it satisfies the condition of \cite{beltran}, then the elements in a nontrivial coset $xN$ all have the same order. Our aim in this paper is to explore pairs that have this property, so we introduce the following concept. 

%\begin{definition}
\bigskip
\noindent
{\bf Definition:} 
Let $G$ be a finite group and let $N$ be a proper, nontrivial, normal subgroup of $G$. We say that the pair $(G,N)$ is an {\it equal order pair} if, for every $x \in G \setminus N$, the elements in the coset $xN$ all have the same order as $x$.
\medskip
%\end{definition}

Straightforward examples are the pairs $(G,N)$ where $G$ is an elementary abelian $p$-group or a cyclic $p$-group for a prime number $p$, and $N$ is any proper nontrivial subgroup of $G$. In these cases, $(G,N)$ is clearly not a Camina pair.   In fact, these are not even pairs that satisfy the property studied in \cite{beltran} except when $G$ is cyclic of order $4$.  In Section \ref{Sect 2}, we will provide other examples of equal order pairs.  We also observe that, unlike Camina pairs, equal order pairs are not well behaved with respect to taking factor groups: if $(G,N)$ is an equal order pair and $M$ is a normal subgroup of $G$ properly contained in $N$, then it is not necessarily true that $(G/M,N/M)$ is an equal order pair. Nevertheless, as we will see, several techniques and arguments introduced in \cite{camina} for Camina pairs can be adapted to the present context.

After discussing some general properties, we will prove in Proposition~\ref{nilpotent} that if a {\it{nilpotent}} group $G$ has a subgroup $N$ for which $(G,N)$ is an equal order pair, then $G$ has prime-power order (and in the case when $G$ is abelian, we will describe the relevant subgroups $N$ which can occur in an equal order pair). The following result includes this observation.

\begin{ThmA}
Let $(G,N)$ be an equal order pair. Then, denoting by $\fit G$ the Fitting subgroup of $G$, either $N$ is contained in $\fit G$ or $\fit G$ is contained in $N$. Moreover, if $N$ is properly contained in $\fit G$, then $\fit G$ has prime-power order.
\end{ThmA}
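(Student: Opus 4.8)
The plan is to isolate one elementary mechanism and feed the whole argument through it. I would first record two easy facts. The \emph{restriction principle}: if $N\le H\le G$ with $H\neq N$, then $(H,N)$ is again an equal order pair, since for $x\in H\setminus N\subseteq G\setminus N$ the coset $xN$ already lies in $G$ and is governed by the hypothesis on $(G,N)$. The \emph{coprimality lemma}: if $x\in G\setminus N$ and $a\in N$ commute, then every prime dividing $o(a)$ divides $o(x)$; indeed, replacing $a$ by a suitable power we may assume $o(a)=r$ is a prime not dividing $o(x)$, and then $ax\in xN$ forces $o(ax)=o(x)$, contradicting $o(ax)=r\cdot o(x)$ for commuting elements of coprime order. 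The ``moreover'' clause is then immediate: if $N\subsetneq\fit G$, then $N$ is a proper nontrivial normal subgroup of the nilpotent group $\fit G$, so $(\fit G,N)$ is an equal order pair by the restriction principle, and Proposition~\ref{nilpotent} shows that $\fit G$ has prime-power order.

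For the dichotomy I would argue by contradiction, assuming $N\not\subseteq\fit G$ and $\fit G\not\subseteq N$ (so in particular $\fit G\neq 1$), and split according to $\fit G\cap N$. If $\fit G\cap N=1$, then $[\fit G,N]\subseteq\fit G\cap N=1$, so every element of $\fit G$ commutes with every element of $N$; applying the coprimality lemma to an element $x\in\fit G$ of prime order $r$ (automatically outside $N$) and an element $a\in N$ of prime order $s$ gives $s=r$, whence both $\fit G$ and $N$ are $r$-groups for one common prime $r$. Then $\fit G N=\fit G\times N$ is a normal nilpotent subgroup, so $\fit G N\subseteq\fit G$ and $N\subseteq\fit G$, contradicting $\fit G\cap N=1$ and $N\neq 1$.

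If instead $\fit G\cap N\neq1$, then $\fit G\cap N$ is a proper nontrivial normal subgroup of the nilpotent group $\fit G$, so $(\fit G,\fit G\cap N)$ is an equal order pair and Proposition~\ref{nilpotent} forces $\fit G$ to be a $p$-group for some prime $p$ (hence $\oh{p'}G=1$). The subgroup $\zent{\fit G}\cap N=\zent{\fit G}\cap(\fit G\cap N)$ is nontrivial, since the nontrivial normal subgroup $\fit G\cap N$ of the $p$-group $\fit G$ meets its center; choose $a$ of order $p$ in it. Then $a\in N$ centralizes $\fitg G=\fit G\lay G$, because $a\in\zent{\fit G}$ and $[\fit G,\lay G]=1$. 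A component $L\not\subseteq N$ would be generated by its $p'$-elements (its simple quotient has order divisible by a prime other than $p$, and the $p'$-elements generate a normal subgroup mapping onto that quotient), so it would contain a $p'$-element outside $N$ centralizing $a$, violating the coprimality lemma. Thus $\lay G\subseteq N$, and $\fitg G=\fit G\lay G$ is self-centralizing, i.e. $\cent G{\fitg G}\subseteq\fitg G$.

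The crux, and the step I expect to be the main obstacle, is to derive a contradiction in this last configuration, i.e. to rule out $N\not\subseteq\fit G$ once $\fit G$ is a $p$-group, $\oh{p'}G=1$, and $\lay G\subseteq N$. Here the coprimality lemma has a built-in trap: setting $A=\Omega_1(\zent{\fit G}\cap N)$, a nontrivial $\F_p$-module contained in $N$, it shows that every $p'$-element of $G\setminus N$ acts fixed-point-freely on $A$ (a fixed nonzero vector would be an element of $N$ of order $p$ centralized by a $p'$-element outside $N$). Since $N\not\subseteq\oh pG$, the group $N$ is not a $p$-group, so some prime $q\neq p$ divides $|N|$; when $G/N$ is not a $p$-group a $p'$-element outside $N$ genuinely exists, so the forbidden commuting pair cannot be produced directly, and the case $G/N$ a $p$-group must be handled on its own. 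I would therefore attack the generic case through this fixed-point-free action: the $p'$-elements outside $N$ yield a Frobenius-like action on $A$, which I would combine with the self-centralizing property $\cent G{\fitg G}\subseteq\fitg G$ and with $\oh{p'}N=1$ (each $\oh rN$ with $r\neq p$ is characteristic in $N$, hence normal in $G$, hence inside the $p$-group $\fit G$) to force $N$ back into $\fit G$, against the assumption. Making this final reduction precise, rather than the earlier bookkeeping, is where the real work lies.
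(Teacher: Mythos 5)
There is a genuine gap, and you have located it yourself: the case $\fit G\cap N\neq 1$ is never closed. Your preliminary work is sound --- the restriction principle is the paper's Lemma~\ref{subgroups}, your coprimality lemma is a correct variant of Lemma~\ref{two}, the ``moreover'' clause is handled exactly as in the paper, and your treatment of the case $\fit G\cap N=1$ (both $\fit G$ and $N$ are $r$-groups for a common prime $r$, so $\fit GN$ is normal and nilpotent, forcing $N\subseteq\fit G$) is correct and arguably more direct than the paper's, which instead deduces $\fit N=1$, hence $N$ nonsolvable, and contradicts Lemma~\ref{product}. But after reducing to $\fit G=\oh pG$ a $p$-group with $\fit G\cap N\neq 1$, your route through $\lay G\subseteq N$ and $\cent G{\fitg G}\subseteq\fitg G$ is bookkeeping that is never cashed in, and the fixed-point-free action of $p'$-elements outside $N$ on ${\bf\Omega}_1(\zent{\fit G}\cap N)$ cannot finish the job: when $G/N$ is a $p$-group there are no such elements, and this subcase is not a degenerate corner --- by Theorem~D it is exactly what happens whenever $N$ is nonnilpotent, and the ${\rm M}_{10}$-type example at the end of the paper shows such configurations genuinely occur. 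So no semiregularity argument can rule it out; the contradiction must come from elsewhere.

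The missing idea is the paper's Lemma~\ref{seven}, i.e.\ the Frobenius--Wielandt mechanism from Lemma~3.3 of \cite{lewis}. In your final configuration, $N\not\subseteq\fit G=\oh pG$ forces $N$ not to be a $p$-group (a normal $p$-subgroup would lie in $\oh pG$), and a Sylow $p$-subgroup $P$ of $G$ is not contained in $N$ (otherwise $\fit G\subseteq P\subseteq N$). Every element of $PN\setminus N$ lies in a coset $xN$ with $x\in P\setminus N$, hence by the equal order hypothesis is a $p$-element and therefore conjugate into $P$; Lemma~3.3 of \cite{lewis} then makes $(PN,P,P\cap N)$ a Frobenius--Wielandt triple, meaning $P\cap P^g\subseteq P\cap N$ for all $g\in PN\setminus P$ (and $PN\setminus P\neq\emptyset$ since $N$ is not a $p$-group). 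Since $\fit G=\oh pG$ is normal and contained in every Sylow $p$-subgroup, $\fit G\subseteq P\cap P^g\subseteq P\cap N\subseteq N$, the desired contradiction --- uniformly in whether $G/N$ is a $p$-group, which is precisely why this device succeeds where your $p'$-semiregularity trap does not. Without this (or an equivalent substitute), your proof establishes only the easy half of the dichotomy.
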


In light of Theorem~A, the analysis of equal order pairs is naturally divided into two parts, depending on whether the subgroup $N$ is nilpotent (i.e., it is contained in $\fit G$) or not. In the former case, this analysis turns out to be closely related to the notion of {\it{$p'$-semiregularity}} of a group, as introduced by P.~Fleischmann, W. Lempken and P. H. Tiep in \cite{FLT}: for a given prime number $p$ and a field $\F$, a finite group $G$ is called $p'$-semiregular over $\F$ if there exists a finite-dimensional $\F G$-module $V$ such that every nontrivial $p'$-element of $G$ does not centralize any nontrivial element of $V$. As we will see in Proposition~\ref{p'semiregular}, if $(G,N)$ is an equal order pair such that $N$ is nilpotent, then $G/N$ is a $p'$-semiregular group over the field ${\rm {GF}}(p)$ of $p$~elements for every prime divisor $p$ of $|N|$. The conclusions about the Sylow subgroups of $G/N$ provided by the following result are a consequence of this fact, although they can be easily proved directly. 

\begin{ThmB}
Let $(G,N)$ be an equal order pair such that $N$ is nilpotent.
\begin{enumeratei}
\item Assume that $N$ is a $p$-group for a prime $p$. If $H$ is a nontrivial $p'$-subgroup of $G$, then $NH$ is a Frobenius group with kernel $N$ and $H$ as a Frobenius complement. In particular, for every prime $q \neq p$, the Sylow $q$-subgroups of $G$ are cyclic or generalized quaternion groups.
\item Assume that $N$ is not a group of prime-power order. Then we have $N = \fit G$. Moreover, if $q$ is a prime divisor of $|G/N|$ and $Q$ is a Sylow $q$-subgroup of $G$, then $NQ/(N\cap Q)$ is a Frobenius group with kernel $N/(N\cap Q)$ and $Q/(N\cap Q)$ as a Frobenius complement. In particular, the Sylow subgroups of $G/N$ are all cyclic or generalized quaternion groups.
\end{enumeratei}
\end{ThmB}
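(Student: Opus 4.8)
The plan is to derive both statements from one elementary observation: if $a$ and $b$ are commuting elements of coprime orders in a finite group, then $ab$ has order $|a|\,|b|$, which is strictly larger than $|a|$ as soon as $b \neq 1$. Combined with the equal-order hypothesis this forces a fixed-point-free action, which is exactly the Frobenius condition. Both parts then reduce to checking this condition and invoking the classical structure of Frobenius complements.

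For part (a), let $H$ be a nontrivial $p'$-subgroup of $G$ and take a nontrivial $h \in H$. Since $N$ is a $p$-group, $h \notin N$, so every element of the coset $hN$ has order $|h|$. If $h$ centralized some nontrivial $n \in N$, then $hn \in hN$ would be a product of commuting elements of coprime orders $|h|$ (a $p'$-number) and $|n|$ (a nontrivial $p$-power), whence $|hn| = |h|\,|n| > |h|$, contradicting the equal-order property. Thus $\cent{N}{h} = 1$ for every nontrivial $h \in H$; together with $N \cap H = 1$ and $N \nor NH$ this says precisely that $NH$ is a Frobenius group with kernel $N$ and complement $H$. The final clause follows by taking $H$ to be a Sylow $q$-subgroup $Q$ with $q \neq p$: as a $q$-group that is a Frobenius complement, $Q$ is cyclic or generalized quaternion.

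For part (b), I first invoke Theorem~A. Being nilpotent and normal, $N$ is contained in $\fit G$; were the containment proper, Theorem~A would force $\fit G$, and hence $N$, to have prime-power order, against the hypothesis, so $N = \fit G$. Now fix a prime $q \mid |G/N|$ and a Sylow $q$-subgroup $Q$ of $G$. Since $N$ is nilpotent it splits as $N = N_q \times N_{q'}$ with $N_q = N \cap Q$ and $N_{q'}$ the $q$-complement, both characteristic in $N$, hence normal in $G$; in particular $N \cap Q \nor NQ$ and $N/(N \cap Q) \cong N_{q'}$ as a $Q$-group. The Frobenius condition for $NQ/(N \cap Q)$ thereby amounts to $\cent{N_{q'}}{x} = 1$ for every $x \in Q \setminus N_q$. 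Any such $x$ lies outside $N$ (since $x \in Q \cap N$ would give $x \in N_q$), so $xN$ has the equal-order property; if $x$ centralized a nontrivial $n \in N_{q'}$, then $xn \in xN$ would have order $|x|\,|n| > |x|$, as $x$ is a $q$-element and $n$ a $q'$-element, a contradiction. Hence $NQ/(N \cap Q)$ is Frobenius with kernel $N/(N \cap Q)$ and complement $Q/(N \cap Q)$, the kernel being nontrivial because $N_{q'} \neq 1$ when $N$ is not of prime-power order. Finally, the Sylow $q$-subgroup of $G/N$ is $QN/N \cong Q/(N \cap Q)$, a $q$-group that is a Frobenius complement, so it is cyclic or generalized quaternion.

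The arguments are short once set up correctly, and I do not expect a serious obstacle. The one genuinely delicate point is the bookkeeping in part (b): identifying $N/(N \cap Q)$ with $N_{q'}$ \emph{equivariantly} under the $Q$-action, so that centralizers in the quotient coincide with centralizers in $N_{q'}$, and thereby reducing the Frobenius condition to a fixed-point statement about commuting coprime elements. The only external input, not a difficulty but worth flagging, is the classical theorem that Sylow subgroups of Frobenius complements are cyclic or generalized quaternion, which yields both ``in particular'' clauses.
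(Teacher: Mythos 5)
Your proof is correct, and part (a) is essentially the paper's own argument: the paper applies Lemma~\ref{subgroups} to get the equal order pair $(NH,N)$ and then runs the argument of Remark~\ref{remFrobenius}, whose engine (via Lemma~\ref{two}) is exactly your observation that a commuting pair of coprime nontrivial orders would inflate the order of an element of $hN$. Where you genuinely diverge is part (b). The paper does not argue directly on $NQ/(N\cap Q)$: it first proves Proposition~\ref{nilpotent N}, a statement that does \emph{not} assume $N$ nilpotent --- it takes a Sylow $p$-subgroup $P$ of $N$, uses the Frattini argument $G=N\norm GP$, and works with a Sylow $q$-subgroup $Q_0$ of $\norm GP$ under the hypothesis $\cent P{Q_0\cap N}\neq 1$ --- and then assembles Theorem~B(b) prime by prime via the factorization $NQ/(N\cap Q)=\bigl(\prod_{p}P(Q\cap N)/(Q\cap N)\bigr)\cdot Q/(Q\cap N)$, the nilpotency of $N$ entering only to guarantee $\cent P{Q\cap N}=P\neq 1$ for each $p$. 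You instead exploit nilpotency at the outset to split $N=N_q\times N_{q'}$ with both factors normal in $G$, identify $N/(N\cap Q)$ with $N_{q'}$ equivariantly, and verify fixed-point-freeness on all of $N_{q'}$ in one step; the equivariance point you flag is indeed the one nontrivial piece of bookkeeping, and your check (that $x\in Q$ centralizes $\bar n$ modulo $N_q$ if and only if it centralizes the $N_{q'}$-component of $n$, the choice of representative being immaterial since $N_q$ centralizes $N_{q'}$) is right. Your route is shorter and self-contained for Theorem~B; what the paper's detour buys is a reusable and strictly more general statement, as Proposition~\ref{nilpotent N} is invoked again in the proof of Theorem~C, and its extra generality (no nilpotency hypothesis on $N$, localization at $\norm GP$) is the reason the paper factors the argument this way.
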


In the situation of Theorem~B, the structure of $G/N$ is therefore well understood: finite groups whose Sylow subgroups are all cyclic or generalized quaternion have been classified by H. Zassenhaus in \cite{Z}, whereas Section~2 of \cite{FLP} provides many structural features of groups satisfying the same property except for one prime.  For instance, we will see that if $(G,N)$ is an equal order pair where $G$ is solvable and $N$ is nilpotent, then the Fitting height of $G$ is at most~$4$ (Proposition~\ref{fittingheight}); on the other hand, we will also see in Remark~\ref{gag} that the derived length of $G/N$ cannot be bounded.

We point out that if $S$ is a group acting $p'$-semiregularly on the $\F S$-module $V$ and $G$ is the semidirect product $V\rtimes S$ formed accordingly, then $(G,V)$ is not necessarily an equal order pair. In fact, let $q=p^n$ be a power of the prime number $p$, where $n$ is a positive integer: then the group $S={\rm{SL}}_2(q)$ acts $p'$-semiregularly on its natural $2$-dimensional module $V$ over ${\rm {GF}}(q)$. But, setting $G=V\rtimes S$, it turns out that $(G,V)$ is an equal order pair if and only if $p$ is odd (see Remark~\ref{natural}). 
On the one hand, this produces many examples of $p'$-semiregular actions not yielding equal order pairs; yet on the other hand, it also produces an infinite family of equal order pairs $(G,N)$ {\it{where $G/N$ is nonsolvable}} and $G$ is not a Frobenius group with kernel $N$. Note that producing examples of non-Frobenius Camina pairs $(G,N)$ where $G/N$ is nonsolvable appears to be considerably harder; the first example of this kind has been described in \cite{nonsolv}, and others can be found in Theorem~1.20 of \cite{GGLMNT}. 

Another feature of non-Frobenius Camina pairs, which we already recalled, is the fact that either $N$ or $G/N$ must have prime-power order; regarding non-Frobenius equal order pairs, it is still true that when $|G/N|$ is not a prime power, then $N$ must be nilpotent (see Theorem~D), but $N$ can have an order divisible by more than one prime. For example, it is possible to define an action of ${\rm SL}_2(5)$ on an abelian group $V$ of squarefree exponent and order $3^4\cdot 5^2$ so that $(V\rtimes{\rm SL}_2(5),V)$ is an equal order pair (see Remark~\ref{mixed}). The following result, Theorem~C, whose proof uses the $p'$-semiregularity condition and relies on Theorem~4.1 of \cite{FLT}, gives some necessary conditions for this to occur. 

To introduce the statement of Theorem~C, we define a particular set of prime numbers $\mathcal{R}$ (see \cite[Section~4]{FLT}). This is the set of all primes $r$ satisfying both of the following conditions:

\begin{enumeratei}
\item $r=2^a \cdot 3^b + 1$, for integers $a \geq 2$ and $b \geq 0$.
\item $(r+1)/2$ is a prime.
\end{enumeratei}
Note that $5 \in \mathcal{R}$.
Recall also, for a group $G$, that the {\it{solvable residual}} $K$ of $G$ is defined to be the smallest normal subgroup of $G$ whose quotient is solvable and the {\it solvable radical} $R$ of $G$ is defined to be the largest normal subgroup of $G$ that is solvable.  When $n$ is an integer, we define $\pi (n)$ to be the set of primes dividing $n$, and when $N$ is a subgroup of $G$, we write $\pi (N)$ for $\pi (|N|)$.

\begin{ThmC}
Let $(G,N)$ be an equal order pair such that $G/N$ is nonsolvable and $N$ does not have prime-power order. Denote by $R/N$ and $K/N$ the solvable radical and the solvable residual of $G/N$, respectively. Then there exists a prime $p$ in $\mathcal{R} \cup \{ 7,17 \}$ such that the following conclusions hold.
\begin{enumeratei}
\item $G/R$ is isomorphic either to ${\rm PSL}_2 (p)$ or to ${\rm PGL}_2 (p)$.
\item $K/N$ is isomorphic to ${\rm SL}_2 (p)$.
\item If there exists a prime $r \in \pi (N)$ such that $K/N$ has a normal Sylow $r$-subgroup, then $p$ is necessarily $5$. Otherwise, we have $\pi(N) = \{ 3,p \}$ when $p \neq 5$, whereas $\pi(N) \subseteq \{ 2,3,5 \}$ when $p = 5$.
\end{enumeratei}
\end{ThmC}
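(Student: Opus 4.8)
The plan is to read off the structure of $G/N$ from the $r'$-semiregularity it inherits, and then to feed this into the classification of nonsolvable $p'$-semiregular groups given in \cite[Theorem~4.1]{FLT}. First I would note that, being nonsolvable, $G/N$ is certainly not a group of prime-power order (such groups are nilpotent), so Theorem~D applies and forces $N$ to be nilpotent. Since by hypothesis $N$ does not have prime-power order, part~(b) of Theorem~B then yields $N=\fit G$ and tells me that every Sylow subgroup of $G/N$ is cyclic or generalized quaternion. Moreover, the proposition on $p'$-semiregularity shows that for every prime $r\in\pi(N)$ the group $G/N$ is $r'$-semiregular over $\GF{r}$; as $N$ is not of prime-power order, at least two such primes $r$ are available.

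Next I would apply \cite[Theorem~4.1]{FLT} to the nonsolvable $r'$-semiregular group $G/N$, for one such $r$. This produces a prime $p\in\mathcal{R}\cup\{7,17\}$ for which the solvable residual $K/N$ is isomorphic to $\SL{p}$ and the quotient $G/R$ by the solvable radical is isomorphic to $\PSL{p}$ or to ${\rm PGL}_2(p)$, establishing (a) and (b). Since $K/N$ and $R/N$ are intrinsic to the fixed group $G/N$, the prime $p$ is independent of the chosen $r$, so one $p$ serves for all prime divisors of $|N|$ simultaneously. I would record the clarifying remark that, because $p\geq 5$, the group $K/N\cong\SL{p}$ is quasisimple, and hence its only normal subgroups are $1$, its centre, and $K/N$ itself; since the Sylow $2$-subgroup of $\SL{p}$ has order at least $8$, it follows that $K/N$ has a normal Sylow $r$-subgroup exactly when that subgroup is trivial, that is, exactly when $r\nmid |\SL{p}|=(p-1)p(p+1)$.

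The substance of (c) is then a prime-by-prime analysis of which $r\in\pi(N)$ can occur, and this is where the real work lies. For each such $r$ the $r'$-semiregular $\GF{r}$-module supplied by the proposition must be compatible with the $\SL{p}$-structure just obtained, and I would extract from \cite[Theorem~4.1]{FLT} and its proof the precise list of admissible characteristics $r$ for a group whose solvable residual is $\SL{p}$. When some $r\in\pi(N)$ fails to divide $(p-1)p(p+1)$ --- equivalently, when $K/N$ has a (necessarily trivial) normal Sylow $r$-subgroup --- the occurrence of such an ``extra'' semiregular characteristic is possible only for $p=5$, which gives the first alternative. In the complementary case every $r\in\pi(N)$ divides $|\SL{p}|$, and here the classification pins the admissible characteristics down to $\{3,p\}$ when $p\neq 5$ and to $\{2,3,5\}$ when $p=5$; since $\pi(N)$ has at least two elements, this forces $\pi(N)=\{3,p\}$ in the former case and $\pi(N)\subseteq\{2,3,5\}$ in the latter.

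The main obstacle I anticipate is the exclusion of $r=2$ from $\pi(N)$ when $p\neq 5$. Over $\GF{2}$ the central involution of $\SL{p}$ is not a $2'$-element and is therefore left unconstrained by $2'$-semiregularity, so ruling out a $2'$-semiregular $\GF{2}$-module for $\SL{p}$ must instead rest on the fine information in \cite{FLT} about odd-order elements of $\SL{p}$ acting fixed-point-freely in characteristic $2$. Carefully distinguishing which primes genuinely force $p=5$ from those that are merely permitted by the $\SL{p}$-structure will be the delicate bookkeeping step that ties the two alternatives of (c) together.
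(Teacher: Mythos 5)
Your skeleton matches the paper's (Theorem~D forces $N$ nilpotent, Proposition~\ref{p'semiregular} gives $r'$-semiregularity of the quotient for every $r\in\pi(N)$, and Theorem~4.1 of \cite{FLT} is the engine), but there is a genuine gap at the central step: Theorem~4.1 of \cite{FLT}, applied in characteristic $r$, does not simply ``produce a prime $p\in\mathcal{R}\cup\{7,17\}$ with $K/N\cong\SL{p}$''. It is a four-way alternative: the relevant group can be $\SL{r^a}$ for \emph{any} power of the characteristic, one of two Suzuki-type configurations, or --- only in characteristic $3$ --- $\SL{p}$ with $p\in\mathcal{R}\cup\{7,17\}$. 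The paper's proof consists precisely of navigating these alternatives, and the steps you omit are the substance. First, it shows $G/R$ is almost simple with simple socle $K/(K\cap R)$ that is not a Suzuki group, using the cyclic/generalized quaternion Sylow structure of $G/N$: a Sylow $2$-subgroup of the socle is dihedral or generalized quaternion, never a Suzuki $2$-group, and a second minimal normal subgroup would produce $C_3\times C_3$ against cyclic Sylow $3$-subgroups. This both discards the Suzuki alternatives of \cite[Theorem~4.1]{FLT} and is what actually yields conclusion (a), since the outer automorphism group of $\PSL{p}$ has order $2$; your sketch never addresses either point. Second, the theorem is applied not to $G/N$ but to the perfect group $\overline{K}/\oh r{\overline{K}}$ for \emph{each} $r\in\pi(N)$, and the primes are played off against each other: if alternative (i) holds for two distinct characteristics then $\PSL{p^a}\cong\PSL{q^b}$ forces $\{p^a,q^b\}=\{4,5\}$ and $p=5$; otherwise some prime of $\pi(N)$ must be $3$ falling under alternative (iv), and any further prime $r'$ gives $\SL{r'^{a}}$ with $\PSL{r'^{a}}\cong\PSL{p}$, forcing $r'=p$. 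In particular the difficulty you anticipate about excluding $r=2$ when $p\neq 5$ dissolves: $2\in\pi(N)$ would force $2^a=4$ and $p=5$ via $\PSL{4}\cong\PSL{5}$, with no fixed-point analysis of odd-order elements in characteristic $2$ needed.

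Your treatment of (c) is also logically backwards. You first assume $K/N\cong\SL{p}$ and then read ``$K/N$ has a normal Sylow $r$-subgroup'' as $r\nmid|\SL{p}|$; but in the paper the normal-Sylow case must be disposed of \emph{before} $K/N\cong\SL{p}$ can be established, because when $r\nmid|\overline{K}/\oh r{\overline{K}}|$ the $r'$-semiregularity makes the extension by the module a Frobenius group, so $\overline{K}/\oh r{\overline{K}}$ is a perfect Frobenius complement and hence $\cong\SL{5}$ by Zassenhaus's theorem --- a route through the Frobenius complement classification, not through Theorem~4.1 at all. One must then still prove $\oh r{\overline{K}}=1$: the paper takes a chief factor of $G$ inside $\oh r{\overline{K}}$, notes it is cyclic (again by the Sylow structure), hence centralized by the perfect group $K$, hence embedded in the Schur multiplier of $\SL{5}$, which is trivial. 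Neither Zassenhaus's theorem nor this Schur-multiplier step appears in your proposal, and without them the first alternative of (c), and indeed the identification of $K/N$ itself (rather than some extension of $\oh r{\overline{K}}$ by $\SL{p}$), remain unproven.
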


Next, we consider the other possibility given by Theorem~A for an equal order pair $(G,N)$; namely the situation when $\fit G$ is properly contained in $N$. In this case, we will see that every element of $G$ not lying in $N$ has $p$-power order for a prime number $p$.  This situation has been studied by the third author in \cite{lewis}, and in \cite{SJ}.  Before stating the next result, we recall that if a group $G$ has a proper, nontrivial subgroup $H$ and a proper normal subgroup $L$ of $H$ such that $H\cap H^g\subseteq L$ for all elements $g\in G\setminus H$, then $(G,H,L)$ is called a {\it{Frobenius-Wielandt triple}} (see \cite[Section 3]{lewis}).

\begin{ThmD}
Let $(G,N)$ be an equal order pair such that $N$ is not nilpotent. Then the following conclusions hold.
\begin{enumeratei}
\item There exists a prime number $p$ such that every element in $G \setminus N$ is a $p$-element; in particular, $G/N$ is a $p$-group. 
\item If $P$ is a Sylow $p$-subgroup of $G$, then $(G,P,P\cap N)$ is a Frobenius-Wielandt triple.
\item If $N$ is nonsolvable, then $p=2$ and every nonsolvable chief factor $U/V$ of $G$ such that $U\subseteq N$ is isomorphic to ${\rm{PSL}}_2(3^{2^r})$ for some $r>0$. 
\end{enumeratei}
\end{ThmD}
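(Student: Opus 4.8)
The plan is to reduce everything to one elementary consequence of the equal order condition and then feed the resulting configuration into the structure theory of \cite{lewis} and \cite{SJ}. By Theorem~A, since $N$ is not nilpotent we have $N \not\sbs \fit G$, hence $\fit G \sbs N$ (indeed $\fit G$ is properly contained in $N$). The engine of the whole argument is the following observation, immediate from the hypothesis: \emph{if $w \in G \setminus N$ is a $p$-element, then $\cent{N}{w}$ is a $p$-group.} For if some $q$-element $c \neq 1$ with $q \neq p$ lay in $\cent{N}{w}$, then $w$ and $c$ would commute with coprime orders, so $o(wc) = o(w)\,o(c)$ would be divisible by $q$; but $wc \in wN$ forces $o(wc) = o(w)$, a power of $p$, a contradiction. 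A companion remark: for $x \in G \setminus N$ and a factorization $x = yz$ into commuting factors of coprime orders, one cannot have exactly one of $y, z$ inside $N$ (otherwise equal order applied to the factor lying outside $N$, multiplied by the one inside $N$, would equate $o(x)$ with the order of a proper divisor of itself). Consequently, for every $x \in G \setminus N$ and every prime $p \mid o(x)$, both the $p$-part $x_p$ and the $p'$-part $x_{p'}$ lie outside $N$.

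For part (a), suppose some $x \in G \setminus N$ had order divisible by two distinct primes $p, r$; replacing $x$ by the $\{p,r\}$-part, I may assume $o(x) = p^a r^b$. By the two observations above, $x_p, x_r \in G \setminus N$, so $\cent{N}{x_p}$ is a $p$-group and $\cent{N}{x_r}$ an $r$-group; as these centralizers commute, $\cent{N}{x} = \cent{N}{x_p} \cap \cent{N}{x_r} = 1$. Thus conjugation by $x$ is a fixed-point-free automorphism of $N$ of composite order. When $N$ is nonsolvable this is already absurd, since a finite group admitting a fixed-point-free automorphism is solvable; hence in that case every element of $G \setminus N$ is a prime-power element. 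When $N$ is solvable I would instead exploit the sharper fact that $\cent{N}{x_p}$ is a $p$-group (so $x_r$ acts coprimely and fixed-point-freely on every Sylow $p$-subgroup of $\fit N = \fit G$, and symmetrically) and propagate this through the Fitting series of $N$ to contradict non-nilpotency. Once prime-power order is known, the uniqueness of the prime is forced by the same mechanism: if a $p$-element and an $r$-element with $p \neq r$ both lay outside $N$, the centralizer restrictions applied to $\fit G$ and to the layer $\lay N$ would again force $N$ to be nilpotent. Finally, each nontrivial element of $G/N$ is represented by a $p$-element, so $G/N$ is a $p$-group.

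With (a) established we are exactly in the setting ``$G/N$ a $p$-group and every element of $G \setminus N$ a $p$-element'' studied in \cite{lewis} and \cite{SJ}. For (b), note $G = PN$ and $P \cap N \nor P$ is proper; the centralizer lemma shows that a $p$-element of $P$ lying outside $N$ cannot be conjugated into a second Sylow $p$-subgroup while staying outside $P \cap N$, which is precisely the intersection condition defining a Frobenius--Wielandt triple, so $(G, P, P \cap N)$ is one, appealing to \cite[Section~3]{lewis}. For (c), a nonsolvable chief factor $U/V$ with $U \sbs N$ arises from a component of $\lay N$; an element $x \in G \setminus N$ normalizing this factor has centralizer constrained to be a $p$-group by the lemma, and an automorphism of a quasisimple group whose centralizer is a $p$-group is extremely restrictive. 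The classification of finite simple groups, as carried out in \cite{lewis} and \cite{SJ}, then pins the factor down to $\PSL{3^{2^r}}$ and forces $p = 2$.

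The main obstacles are twofold. The first is the solvable case of the single-prime reduction in part (a): the clean fixed-point-free shortcut (a group with a fixed-point-free automorphism is solvable) only disposes of the nonsolvable case, and for solvable $N$ I expect to need a delicate Hall/Fitting analysis using the full strength of the centralizer lemma (applied to \emph{all} prime-power elements outside $N$, not merely to $x_p$ and $x_r$) to rule out composite orders and mixed primes. The second, and deeper, obstacle is part (c): its genuine content is the CFSG-based determination that the only admissible nonsolvable sections are $\PSL{3^{2^r}}$, together with the conclusion $p = 2$; here I would rely most heavily on the results already developed in \cite{lewis} and \cite{SJ}.
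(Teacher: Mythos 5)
The core of Theorem~D is part (a), and that is precisely where your proposal has a genuine gap. Your centralizer lemma is sound (it is the final assertion of Lemma~\ref{two} in disguise), and your observation that a composite-order element $x\in G\setminus N$ yields $\cent Nx=\cent N{x_p}\cap\cent N{x_r}=1$ is correct; but from there you dispose only of nonsolvable $N$ (via the CFSG-dependent theorem of Rowley that a group with a fixed-point-free automorphism is solvable, which you should at least cite), and you explicitly defer the solvable case to an unspecified ``delicate Hall/Fitting analysis.'' Likewise the uniqueness of the prime --- ``the centralizer restrictions applied to $\fit G$ and to the layer $\lay N$ would again force $N$ to be nilpotent'' --- is an assertion, not an argument, and there is no evidence the elementary propagation you envisage can close it. The paper does not proceed elementarily here either: composite orders are excluded by Proposition~\ref{pq}(d), whose punchline is the Hughes--Thompson-type theorem \cite[V, 8.13]{huppert} applied to an element of prime order outside $N$ (covering solvable and nonsolvable $N$ uniformly, with no need for Rowley); and the single-prime conclusion is obtained by splitting on $|\pi(G/N)|$: if exactly two primes divide $|G/N|$, Higman's theorem (via \cite[Theorem~1.2]{lewis}) makes $G/N$ Frobenius or $2$-Frobenius, and then Lemma~\ref{NoTypeb} --- itself a nontrivial induction using the Frattini argument and Proposition~\ref{pq}(b) --- yields the contradiction that $N$ is nilpotent; if three or more primes divide $|G/N|$, then \cite[Theorem~1.3]{lewis} and Brandl's classification (\cite[Theorem~2.2]{lewis}) reduce $G$ to ${\rm M}_{10}$ or to an extension of an elementary abelian $2$-group by one of ${\rm PSL}_2(4)$, ${\rm PSL}_2(8)$, ${\rm Sz}(8)$, ${\rm Sz}(32)$, and each case is excluded by hand (for ${\rm M}_{10}$, elements of orders $4$ and $8$ share a coset; in the other cases Theorem~A forces $M\subseteq N$ and $N$ cannot be proper). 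Your sketch never confronts the Frobenius/$2$-Frobenius configurations for $G/N$, which is where the real difficulty lives.

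Two smaller points. In (b), the condition you describe (``cannot be conjugated into a second Sylow $p$-subgroup while staying outside $P\cap N$'') is not the Frobenius--Wielandt condition $P\cap P^g\subseteq P\cap N$ and does not follow from the centralizer lemma alone; the correct route, taken by the paper via Lemma~\ref{seven}, is that once (a) is known every element of $G\setminus N=PN\setminus N$ is a $p$-element, hence conjugate into $P$, whereupon \cite[Lemma~3.3]{lewis} applies --- so your citation of \cite[Section~3]{lewis} lands in the right place even though your stated reason is off. In (c), the conclusion $p=2$ has a one-line proof you replace with vague CFSG considerations: if $o(xN)$ were odd then $o(x)$ would be odd by Lemma~\ref{two}, and Proposition~\ref{pq}(c) (resting on \cite[Theorem~B(c)]{guralnick}) would force $N$ to be solvable; the identification of the nonsolvable chief factors as $\PSL{3^{2^r}}$ is then, as you say, exactly Theorem~A of \cite{SJ}.
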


As another difference with Camina pairs, equal order pairs $(G,N)$ as in (c) of Theorem D do exist.  In fact, there exists a split extension $G$ of the alternating group ${\rm{A}}_6\cong{\rm{PSL}}_2(3^{2})$ by a cyclic group of order $8$ such that $\zent G$ is cyclic of order $4$, $G/\zent G$ is isomorphic to the Mathieu group ${\rm M}_{10}$ and, denoting by $N$ the unique subgroup of index $2$ in $G$, the pair $(G,N)$ is an equal order pair.  As we will see in Proposition~\ref{minimalnonsolvable}, a group $G$ which is minimal with respect to the property of having a nonsolvable subgroup $N$ such that $(G,N)$ is an equal order pair is necessarily similar to the above example; namely, denoting by $R$ the solvable radical of such a group $G$, we have $G/R\cong{\rm M}_{10}$ and $R$ is not a minimal normal subgroup of $G$. In general, since ${\rm{PSL}}_2(3^f)$ has a subgroup isomorphic to ${\rm A}_6$ whenever $f$ is even, an equal order pair as in (c) of Theorem~D always has a section (contained in $N$) isomorphic to ${\rm A}_6$.

%Throughout the following discussion, every group will be tacitly assumed to be a finite group.

\section{Preliminary results and some examples}\label{Sect 2}

We start this section by proving some preliminary results; as we will see, many of the techniques from \cite{beltran} and~\cite{camina} remain valid in the context of equal order pairs. The following lemma is an easy but useful observation.

\begin{lemma} \label{subgroups} 
Let $(G,N)$ be an equal order pair. If $X$ is a subgroup of $G$ not contained in $N$ and $X \cap N$ is nontrivial, then $(X, X \cap N)$ is also an equal order pair.
\end{lemma}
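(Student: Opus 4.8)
The plan is to show that the equal order property for the pair $(X, X\cap N)$ follows almost immediately by restricting the hypothesis on $(G,N)$ to elements lying in $X$. The key observation is that the relevant cosets match up: for an element $x\in X$, the coset of $x$ with respect to $X\cap N$ inside $X$ is contained in the coset of $x$ with respect to $N$ inside $G$.

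First I would verify the hypotheses of the definition of equal order pair for $(X, X\cap N)$. Since $X\cap N$ is nontrivial by assumption, and since $X$ is not contained in $N$ (so that $X\cap N$ is a proper subgroup of $X$), the subgroup $X\cap N$ is indeed a proper, nontrivial, normal subgroup of $X$; normality follows because $N\nor G$ forces $X\cap N\nor X$. Thus the setup of the definition applies to $(X, X\cap N)$.

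Next I would check the defining order condition. Take any $x\in X\setminus(X\cap N)$; since $x\in X$ and $x\notin N$, we have $x\in G\setminus N$. Now let $y$ be any element of the coset $x(X\cap N)$ in $X$, say $y=xn$ with $n\in X\cap N$. Then $n\in N$, so $y=xn\in xN$, the coset of $x$ with respect to $N$ in $G$. Because $(G,N)$ is an equal order pair and $x\in G\setminus N$, every element of $xN$ has the same order as $x$; in particular $y$ has the same order as $x$. Since $y$ was an arbitrary element of $x(X\cap N)$, all elements of this coset share the order of $x$, which is precisely what is required.

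There is no real obstacle here: the argument is a straightforward containment of cosets together with the normality of $X\cap N$ in $X$. The only point that deserves a moment's care is confirming that $X\cap N$ is a \emph{proper} subgroup of $X$ (so that $X\setminus(X\cap N)$ is nonempty and the definition is nonvacuous), which is exactly the content of the hypothesis that $X$ is not contained in $N$.
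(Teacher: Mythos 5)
Your proof is correct and follows essentially the same route as the paper's: both arguments observe that $X\cap N$ is a proper normal subgroup of $X$, that any $x\in X\setminus(X\cap N)$ lies in $G\setminus N$, and that the coset $x(X\cap N)$ is contained in $xN$, so the equal order property of $(G,N)$ transfers directly. Your additional care about nontriviality and properness of $X\cap N$ is a harmless elaboration of what the paper treats as immediate.
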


\begin{proof}
Clearly, $X \cap N$ is a proper normal subgroup of $X$. If $x \in X \setminus(X \cap N)$ then $x$ lies in $G \setminus N$; since, for every $y \in X$ such that $y( X \cap N) = x(X \cap N)$, we have $yN = xN$, our assumption on $(G,N)$ yields that the order of $y$ is the same as the order of $x$.
\end{proof}

Next, for a given equal order pair $(G,N)$, we describe the orders of elements outside $N$; in fact, inspired by \cite{guralnick}, we consider the weaker condition that all elements of $xN$ have the same order {\it for a single} element $x \in G\setminus N$. The proofs of Lemma \ref{two} and (a), (b), (d) of Proposition \ref{pq} closely follow arguments in \cite{camina}.  When $G$ is a group and we have an element $g \in G$, we write ${\rm o}(g)$ for the order of $g$.

\begin{lemma}\label{two} 
Let $G$ be a group, let $N$ a normal subgroup of $G$ and fix an element $x \in G \setminus N$. Suppose ${\rm o}(xn) = {\rm o}(x)$ for all $n \in N$.  If $x$ has order $m$ modulo $N$, then the order of $x$ is only divisible by primes that divide $m$. In particular, if $y \in \cent N x$ has prime order $p$, then $p$ divides $m$. 
\end{lemma}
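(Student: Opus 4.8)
The plan is to set $k = {\rm o}(x)$ and $m = {\rm o}(xN)$ (the order of $x$ modulo $N$) and to exploit the primary decomposition of the cyclic group $\langle x\rangle$. First I would record the standard fact that $m$ divides $k$, since $(xN)^k = x^kN = N$; thus every prime dividing $m$ already divides $k$, and the whole content of the first assertion is the reverse implication, namely that no prime can divide $k$ without dividing $m$.

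For the main step I would argue by contradiction. Suppose some prime $p$ divides $k$ but not $m$, and write $k = p^a t$ with $a \geq 1$ and $p \nmid t$. Decompose $x = x_p x_{p'}$ into its commuting $p$-part $x_p$ (of order $p^a$) and $p'$-part $x_{p'}$ (of order $t$); here $x_p$ lies in the Sylow $p$-subgroup $\langle x^t\rangle$ of $\langle x\rangle$, because $k/p^a = t$. Since $p \nmid m$ and $m \mid k = p^a t$, I get $m \mid t$, hence $x^t = (x^m)^{t/m} \in N$ and therefore $x_p \in \langle x^t\rangle \subseteq N$. Consequently $x_{p'} = x\,x_p^{-1}$ lies in the coset $xN$, so the hypothesis forces ${\rm o}(x_{p'}) = {\rm o}(x) = k$; but ${\rm o}(x_{p'}) = t < k$ as $p^a > 1$, a contradiction. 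This shows that the only primes dividing ${\rm o}(x)$ are those dividing $m$.

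For the final (``in particular'') assertion, let $y \in \cent N x$ have prime order $p$. Because $y \in N$, the element $xy$ lies in the coset $xN$, so the hypothesis gives ${\rm o}(xy) = {\rm o}(x)$. Since $x$ and $y$ commute and ${\rm o}(y) = p$, were $p$ not to divide ${\rm o}(x)$ we would have ${\rm o}(xy) = p\cdot{\rm o}(x) \neq {\rm o}(x)$, a contradiction; hence $p$ divides ${\rm o}(x)$, and the first part then yields $p \mid m$.

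The only delicate point I anticipate is the bookkeeping that places the $p'$-part $x_{p'}$ inside the coset $xN$: this is precisely where the assumption $p \nmid m$ is used, via the divisibility $m \mid t$ and the containment $x^m \in N$, to guarantee that $x_p$ (and hence $x_p^{-1}$) lies in $N$. Everything else is routine manipulation of element orders in the cyclic group $\langle x\rangle$, together with a single application of the equal-order hypothesis to a well-chosen element of the coset $xN$.
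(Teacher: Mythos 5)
Your proof is correct and is essentially the paper's argument: both rest on decomposing $x$ into commuting coprime parts inside $\langle x\rangle$, using $x^m\in N$ to place the unwanted part in $N$, and then applying the equal-order hypothesis to the remaining part, which lies in the coset $xN$ --- the paper simply does this in one step with the $\pi(m)$-part/$\pi(m)'$-part decomposition rather than prime by prime. Your handling of the final assertion (via ${\rm o}(xy)=p\cdot {\rm o}(x)$ for commuting elements of coprime orders) is likewise equivalent to the paper's, and in fact sidesteps a small slip there: the paper's displayed computation $1=(xy)^m=x^my^m=y^m$ should use the exponent ${\rm o}(x)$ in place of $m$, since ${\rm o}(x)$ need not divide $m$.
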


\begin{proof} 
Let $\pi = \pi (m)$ denote the set of prime divisors of $m$, $\pi'$ the set of primes not lying in $\pi$, and assume that the order of $x$ is not a $\pi$-number. Write $x = uv$ where $u$ is a $\pi$-element, $v$ is a $\pi'$-element, and $[u,v]=1$. As the order of $xN$ is a $\pi$-number, we clearly have $v\in N$. But then $u$ and $uv$ have the same order, a contradiction. Thus the order of $x$ is only divisible by primes that divide $m$. Finally suppose $y \in\cent Nx$ has order $p$.  Since ${\rm o} (xy) ={\rm o} (x)$, it follows that $1 = (xy)^m= x^my^m = y^m$ and the result follows.
\end{proof}

\begin{remark}\label{remFrobenius}
If $(G,N)$ is an equal order pair such that $N$ and $G/N$ have coprime orders then, by the Schur-Zassenhaus Theorem, $N$ has a complement $H$ in $G$.  Moreover, by the previous lemma, for every element $h \in H$ we have $\cent N h = 1$. Thus, $G$ is a Frobenius group with kernel $N$.  Note that since every normal subgroup of an elementary abelian group yields both an equal order pair and a complement, we cannot generalize the theorem of Chillag and MacDonald \cite[Proposition 3.2]{ChMa} that when $(G,N)$ is a Camina pair that is complemented, then $G$ is a Frobenius group with the Frobenius kernel $N$.
\end{remark}

The following result provides conditions on the orders of elements outside of $N$ that ensure that $N$ is abelian, nilpotent or solvable.

\begin{proposition}\label{pq} 
Let $G$ be a group, $N$ a normal subgroup of $G$ and $x \in G \setminus N$. Suppose ${\rm o} (xn) = {\rm o}(x)$ for all $n \in N$. Then the following conclusions hold. 

\begin{enumeratei}
\item If ${\rm o}(x) = 2$ then $N$ is abelian.

\item If ${\rm o}(x) = p$ for some prime $p$, then $N$ is nilpotent. 

\item If ${\rm o}(xN)$ is odd, then $N$ is solvable.

\item Let $(\langle x \rangle N,N)$ be an equal order pair. If $m = {\rm o}(xN)$ is not a prime power, then ${\rm o}(x) = m$ and $N$ is nilpotent.
\end{enumeratei}
\end{proposition}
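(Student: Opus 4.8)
The plan is to prove Proposition~\ref{pq} one part at a time, exploiting the single-coset hypothesis $\mathrm{o}(xn)=\mathrm{o}(x)$ for all $n\in N$ together with Lemma~\ref{two}. Throughout, I would use the key consequence of Lemma~\ref{two}: if $y\in\cent N x$ has prime order $p$, then $p$ divides $m=\mathrm{o}(xN)$. The unifying idea is that $x$ acts by conjugation on $N$ (and on certain characteristic subgroups and chief factors), and the hypothesis severely restricts which primes can occur among the fixed points of this action. This lets me bring in the classical theory of fixed-point-free (or near fixed-point-free) automorphisms.

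\medskip

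For part~(a), suppose $\mathrm{o}(x)=2$, so $m=2$. By Lemma~\ref{two}, any element of $\cent N x$ of prime order must have order dividing $2$, hence $\cent N x$ is a $2$-group; but more usefully, for any $n\in N$ the relation $\mathrm{o}(xn)=2$ gives $(xn)^2=1$, i.e. $xnxn=1$, so $x^{-1}nx=x n x=n^{-1}$ (using $x^2=1$). Thus $x$ inverts every element of $N$, and inverting every element forces $N$ to be abelian (since for $a,b\in N$ we have $(ab)^{-1}=x^{-1}(ab)x=(x^{-1}ax)(x^{-1}bx)=a^{-1}b^{-1}$, whence $b^{-1}a^{-1}=a^{-1}b^{-1}$). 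For part~(c), the hypothesis that $m=\mathrm{o}(xN)$ is odd, combined with Lemma~\ref{two}, shows that $N$ has no element of order $2$ centralized appropriately; more directly, I would pass to the coprime/odd-order setting and invoke the Feit--Thompson Odd Order Theorem: the obstacle is to arrange that the relevant subgroup on which solvability is needed has odd order. I expect the cleanest route is to observe that $\cent N x$ contains no involution (by Lemma~\ref{two}, since $2\nmid m$), then build on the structure of $\langle x\rangle N$ to deduce $N$ is solvable; alternatively one argues that a minimal counterexample $N$ would be a nonabelian simple group admitting an automorphism-type action whose fixed points avoid the prime $2$, which is ruled out because every nonabelian simple group has even order.

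\medskip

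For part~(b), assume $\mathrm{o}(x)=p$. The goal is nilpotency of $N$. I would argue by induction on $|N|$ using that $x$ acts on $N$ with the property that any $x$-invariant subgroup inherits the same hypothesis. The key point from Lemma~\ref{two} is that any prime-order element of $\cent N x$ has order $p$; one shows that $x$ acts ``almost fixed-point-freely'' on the $q$-parts of $N$ for primes $q\neq p$. Concretely, for a prime $q\neq p$ dividing $|N|$, I would look at a Sylow or chief-factor level and show $x$ centralizes no nontrivial $q$-element of $N$, forcing a Frobenius-type action; by Thompson's theorem on fixed-point-free automorphisms of prime order (a group admitting a fixed-point-free automorphism of prime order is nilpotent), one concludes nilpotency. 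The main obstacle here is handling the prime $p$ itself, since $x$ need not act without fixed points on the $p$-part of $N$; I would isolate $\oh{p}{N}$ and the $p'$-part and show the action is fixed-point-free on the $p'$-part, then assemble nilpotency.

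\medskip

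Part~(d) is the substantive one and is where I expect the main difficulty. Here $(\langle x\rangle N,N)$ is a genuine equal order pair, $m=\mathrm{o}(xN)$ is not a prime power, and I must prove both $\mathrm{o}(x)=m$ and that $N$ is nilpotent. The plan is: first, since $m$ has at least two prime divisors, Lemma~\ref{two} tells me $\mathrm{o}(x)$ is divisible only by primes dividing $m$; to upgrade this to $\mathrm{o}(x)=m$ I would decompose $x=\prod_q x_q$ into its commuting prime-power parts and show each $x_q$ survives modulo $N$ with full order, using that for two distinct primes $q,q'\mid m$ an appropriate power of $x$ is a single-prime element which, by the equal order property applied within $\langle x\rangle N$, cannot collapse. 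For nilpotency of $N$: the fact that $m$ is divisible by at least two primes means that for each prime $q\mid m$ there is a power $y=x^{m/q^{a}}$ of $x$ whose order modulo $N$ is coprime to $q$ in a controlled way, and applying Lemma~\ref{two} to the fixed points forces $\cent N{x}$ to have elements only of orders dividing $m$; the decisive step is to show $x$ acts fixed-point-freely on $N$ and then invoke that a group admitting a fixed-point-free automorphism is nilpotent (the coprime case follows from Thompson--Higman type results, while the general fixed-point-free case gives nilpotency by the theorem of Thompson extended by Higman). The hard part will be establishing genuine fixed-point-freeness of the $x$-action on all of $N$ rather than just on $p'$-parts, and this is exactly where the two-prime hypothesis on $m$ is used: it prevents the fixed points from hiding inside a single prime, so that Lemma~\ref{two} forces $\cent N x=1$.
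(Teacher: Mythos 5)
Your part (a) is correct and coincides with the paper's argument, and in part (d) your idea of using two coprime powers of $x$ to trap $\cent Nx$ inside $\cent N{x^h}\cap\cent N{x^k}=1$ (whence $x^m\in\cent Nx$ forces ${\rm o}(x)=m$) is exactly the paper's route. But the way you propose to get \emph{nilpotency}, in both (b) and (d), has a genuine gap. In (b) the action of $x$ on $N$ is simply not fixed-point-free: Lemma~\ref{two} only tells you that prime-order elements of $\cent Nx$ have order $p$, so $x$ may have plenty of fixed $p$-elements, and your plan to ``isolate $\oh pN$ and the $p'$-part'' is circular --- before nilpotency is known, $N$ has no $p'$-part (no normal $p$-complement) to act on, and Thompson's theorem requires $\cent Nx=1$ on all of $N$. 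The condition ``all fixed points of $x$ are $p$-elements'' alone does not imply nilpotency (conjugation by a $3$-cycle on ${\rm S}_3$ already has this property). The missing idea is elementary but decisive: since ${\rm o}(nx)={\rm o}(x)=p$ for every $n\in N$, expanding $(nx)^p=1$ yields $n\,n^{x^{-1}}n^{x^{-2}}\cdots n^{x^{-(p-1)}}=1$ for all $n\in N$, i.e.\ $x$ induces a \emph{splitting automorphism} of order $p$, and nilpotency is then Kegel's theorem, cited in the paper as \cite[V, 8.13]{huppert}. In (d) the same problem is compounded by a false assertion: a fixed-point-free automorphism of \emph{composite} order does not force nilpotency (Thompson's theorem is for prime order; Higman's result bounds the nilpotency class in the prime case, it is not an extension to composite order, where non-nilpotent examples exist). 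Nor can you pass to the prime-order power $x^{m/p}$ and apply Thompson, because $\cent N{x^{m/p}}$ need not be trivial even when $\cent Nx=1$ --- by Lemma~\ref{two} it is merely a $p$-group. The paper's fix is that ${\rm o}(x)=m$ makes $x^{m/p}$ an element of order exactly $p$ lying outside $N$, whose whole coset consists of elements of order $p$ by the equal order pair hypothesis, so that part (b) --- that is, the splitting-automorphism identity again --- applies.

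Part (c) also has a gap. Your minimal-counterexample sketch is a non sequitur: knowing that $\cent Nx$ contains no involution (equivalently, has odd order) in no way contradicts $N$ having even order, so ``every nonabelian simple group has even order'' rules out nothing; an automorphism of odd order of a group of even order can perfectly well have fixed-point subgroup of odd order. This is not repairable by Feit--Thompson alone. The paper's proof deduces from Lemma~\ref{two} that ${\rm o}(x)$ is odd and then invokes \cite[Theorem B(c)]{guralnick} of Guralnick and Navarro, a deep (classification-dependent) result which is precisely the input your outline lacks; without citing it or reproving it, your argument for (c) does not close.
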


\begin{proof} 
In the setting of (a), for every $n$ in $N$ we have $xnxn=1$, hence $n^x = x^{-1}nx = xnx = n^{-1}$. So the inversion map is an automorphism of $N$, and it follows that $N$ is abelian.

Next, we prove (b). For $n \in N$ we have 
$$nn^{x^{-1}}n^{x^{-2}} \ldots \, n^{x^{-(p-1)}} = (nx)^p =1$$
(as $nx\in Nx= xN$). It follows from \cite[V, 8.13]{huppert} that $N$ is nilpotent.

Assuming the hypothesis in (c), by Lemma \ref{two} the order of $x$ is odd. Therefore, conclusion (c) follows directly from \cite[Theorem B(c)]{guralnick}.

Finally, assume the setting of (d). Then there exist coprime integers $h$, $k$ larger than $1$ such that $m=hk$. By Lemma~\ref{two} we have that $\cent N{x^h}$ is a $\pi(k)$-group and $\cent N{x^k}$ is a $\pi(h)$-group. In particular, we get $\cent N x \leq \cent N{x^h} \cap \cent N{x^k} = 1$ and, since $x^{m} \in \cent N x$, we deduce that ${\rm o}(x)=m$. Now, taking a prime divisor $p$ of $m$, we see that $x^{m/p}$ is an element of order $p$ lying outside $N$, and the result follows from (b).
\end{proof}

By conclusion (d) of the above proposition, if $(G,N)$ is an equal order pair and $N$ is not nilpotent, then every element of $G/N$ has prime-power order; in fact, combining this with Lemma~\ref{two}, we get the stronger conclusion that every element of $G\setminus N$ has prime power order. Assuming $G/N$ solvable, G.~Higman proved that in this situation $|G:N|$ is divisible by at most two distinct primes; moreover, if $|G:N|$ is divisible by exactly two distinct primes, then $G/N$ is either a Frobenius group or a $2$-Frobenius group (see \cite[Theorem~1]{Higman57}).  We will define $2$-Frobenius groups in Section \ref{Sect 6}. %(Note that $G$ is a {\it $2$-Frobenius group} if there exist normal subgroups $N < K$ so that $K$ is a Frobenius group with Frobenius kernel $N$ and $G/N$ is a Frobenius group with Frobenius kernel $K/N$.)  
However, we will see in Theorem~D that if $N$ is not nilpotent, then $G/N$ is necessarily solvable, in fact $G/N$ is a $p$-group for a prime $p$.

\medskip
In the remaining part of this section, we provide some examples of equal order pairs, together with some easy tools to produce them. In the following result, we see that equal order pairs work well with direct products under certain conditions; this will allow us to construct new equal order pairs as products of existing ones.

\begin{lemma}\label{dir prod pairs}
Suppose that $(G_1,N_1)$ and $(G_2,N_2)$ are equal order pairs where all elements in $G_1 \setminus N_1$ have order $m_1$, all elements in $G_2 \setminus N_2$ have order $m_2$, the exponent of $N_2$ divides $m_1$ and the exponent of $N_1$ divides $m_2$.  Then $(G_1 \times G_2,N_1 \times N_2)$ is an equal order pair.
\end{lemma}

\begin{proof}
Given $(g_1,g_2) \in (G_1 \times G_2) \setminus (N_1 \times N_2)$, we show that all elements in the coset $(g_1,g_2)(N_1 \times N_2)$ have the same order.   

Observing that ${\rm o}((g_1,g_2)) = {\rm lcm} ({\rm o}(g_1),{\rm o}(g_2))$ and choosing $n_1 \in N_1$, $n_2 \in N_2$, assume first $g_1 \in G_1 \setminus N_1$ and $g_2 \in N_2$.  Then $g_1$ and $g_1n_1$ have order $m_1$, whereas $g_2$ and $g_2n_2$ have orders dividing $m_1$.  Thus, ${\rm o}((g_1,g_2)) = {\rm o} ((g_1 n_1, g_2n_2)) = m_1$.
The case when $g_1 \in N_1$ and $g_2 \in G_2 \setminus N_2$ is similar (and we get ${\rm o}((g_1,g_2)) = m_2$), so we assume $g_1 \in G_1 \setminus N_1$ and $g_2 \in G_2 \setminus N_2$. In this case, $g_1$ and $g_1n_1$ have order $m_1$, whereas $g_2$ and $g_2n_2$ have order $m_2$.  Thus, ${\rm o}((g_1,g_2)) = {\rm o}((g_1 n_1, g_2n_2)) = {\rm lcm} (m_1, m_2)$.
\end{proof}

In view of the previous lemma, we see that $({\rm S}_3 \times {\rm A}_4, \;{\rm A}_3 \times \oh 2{{\rm A}_4})$ is an equal order pair. More generally, for $p,q$ prime numbers, let $G_1$ be a Frobenius group whose kernel $N_1$ is an elementary abelian $q$-group and whose Frobenius complements have order $p$; also, let $G_2$ be a Frobenius group whose kernel $N_2$ is an elementary abelian $p$-group and whose Frobenius complements have order $q$. Then $(G_1 \times G_2, N_1\times N_2)$ is an equal order pair. For instance, it is easy to construct a group of this kind where $|G_1| = 3\cdot 5^2$ and $|G_2| = 3^4 \cdot 5$.  Note that, for  these equal order pairs, $N = \fit G$ and $G$ is not a Frobenius group.  

\smallskip
The next result can be used to produce examples of equal order pairs starting from a Frobenius group. 

\begin{lemma} \label{generalize j}
Let $G$ be a Frobenius group with kernel $K$, and let $H$ be a Frobenius complement of $G$; also, let $N$ be a normal subgroup of $G$ properly containing $K$ and define $M = H \cap N$. Then $(H,M)$ is an equal order pair if and only if $(G,N)$ is an equal order pair.  	
\end{lemma}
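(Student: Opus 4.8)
The plan is to reduce the statement to two facts about the Frobenius structure of $G$: a description of the normal subgroup $N$, and an order-preservation fact for elements outside the kernel. First I would record that, since $K\subseteq N$ and $G=KH$, Dedekind's modular law gives $N=K(N\cap H)$; setting $M=N\cap H$ this reads $N=KM$, and $M$ is a normal subgroup of $H$ because $H$ normalizes the normal subgroup $N$. I would also set up the dictionary between the two sides: writing each element of $G$ uniquely as $kh$ with $k\in K$, $h\in H$, one has $kh\in N=KM$ if and only if $h\in M$. Consequently $N=G$ if and only if $M=H$, while $N\supsetneq K$ forces $M\neq 1$. Thus $M$ is a proper, nontrivial normal subgroup of $H$ exactly when $N$ is a proper (necessarily nontrivial) normal subgroup of $G$, so the two pairs are simultaneously admissible.

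The key technical ingredient is the following order-preservation fact: for every $h\in H\setminus\{1\}$ and every $k\in K$, the element $kh$ is conjugate in $G$ to $h$, whence ${\rm o}(kh)={\rm o}(h)$. To prove it I would try to solve $k_0hk_0^{-1}=kh$ for some $k_0\in K$. A direct computation gives $k_0hk_0^{-1}=\bigl(k_0\,({}^h k_0)^{-1}\bigr)h$, so the requirement becomes $k_0\,({}^h k_0)^{-1}=k$; and the map $k_0\mapsto k_0\,({}^h k_0)^{-1}$ on $K$ is injective, hence bijective, precisely because $\cent{K}{h}=1$ for $h\neq 1$. This is the point where the fixed-point-free action of a Frobenius complement is used.

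With these in hand I would analyze a single coset. Fix $g\in G\setminus N$ and write $g=kh$; by the dictionary $h\in H\setminus M$, so in particular $h\neq 1$ and ${\rm o}(g)={\rm o}(h)$. For $n\in N$ write $n=k_1m$ with $k_1\in K$ and $m\in M$; pushing the $K$-part to the left yields $gn=\bigl(k\,{}^h k_1\bigr)(hm)$ with $hm\in H$, and $hm\neq 1$ since $1\in M$ while $h\notin M$. Applying the order-preservation fact gives ${\rm o}(gn)={\rm o}(hm)$, and as $n$ ranges over $N$ the $H$-part $m$ ranges over all of $M$. Hence the orders occurring in $gN$ are exactly the orders occurring in the coset $hM$ of $H$; combined with ${\rm o}(g)={\rm o}(h)$, this shows that all elements of $gN$ have the same order as $g$ if and only if all elements of $hM$ have the same order as $h$.

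Finally I would assemble the equivalence. Every $h\in H\setminus M$ is itself an element $g=h$ of $G\setminus N$ (taking $k=1$), and conversely every $g\in G\setminus N$ projects to some $h\in H\setminus M$; since the coset condition just derived depends only on $h$ and not on $k$, quantifying over $g\in G\setminus N$ imposes exactly the same family of conditions as quantifying over $h\in H\setminus M$. Therefore $(G,N)$ is an equal order pair if and only if $(H,M)$ is one. I expect the only genuine obstacle to be the order-preservation fact, specifically the bijectivity of $k_0\mapsto k_0\,({}^h k_0)^{-1}$ via fixed-point-freeness; the remainder is bookkeeping with the $K$–$H$ decomposition.
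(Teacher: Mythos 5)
Your proposal is correct and takes essentially the same approach as the paper: both hinge on the decomposition $N=KM$ together with the fact that conjugation by kernel elements makes every element of $Kh$ (for $1\neq h\in H$) conjugate to $h$, so that the orders occurring in a coset $gN$ are exactly those occurring in the corresponding coset $hM$ of the complement. The only deviations are minor: you prove the conjugacy fact from scratch via the bijectivity of $k_0\mapsto k_0\,({}^{h}k_0)^{-1}$, which follows from $\cent{K}{h}=1$, where the paper quotes the known Frobenius class structure (the $G$-class of $hy$ is $hyK$), and you read the forward implication off the same coset computation, where the paper simply invokes Lemma~\ref{subgroups}.
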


\begin{proof}
Note that we have $N=KM$, so $M$ is nontrivial. If $(G,N)$ is an equal order pair then, by definition, $N$ is a proper subgroup of $G$; therefore $H$ is not contained in $N$ and Lemma~\ref{subgroups} yields that $(H,M)$ is an equal order pair.
	
Assume now that $(H,M)$ is an equal order pair.  Since $G = HN$, we have to show that for every $h \in H \setminus N$ all elements in $hN$ have the same order.  We know that, for $y \in M$, the conjugacy class of $hy$ in $G$ is $hyK$.  Hence $hN$ is partitioned by the conjugacy classes $hyK$ as $y$ runs over $M$, and therefore every element in $hN$ is conjugate to an element of $hM$. Since $(H,M)$ is an equal order pair, every element in $hM$ has the same order and our claim is proved.  
\end{proof}

The above lemma applies whenever a Frobenius group has a cyclic Frobenius complement of prime-power order. For instance, if $G$ is the Frobenius group of order $20$ and $N$ is its subgroup of index $2$ (isomorphic to the dihedral group ${\rm D}_{10}$), then $(G,N)$ is an equal order pair since a Frobenius complement $C$ for $G$ is cyclic of order $4$, and $(C,C \cap N)$ is an equal order pair; so Lemma \ref{generalize j} implies that $(G,N)$ is an equal order pair.  Notice that this yields an equal order pair $(G,N)$ where $G$ is solvable and $N$ is not nilpotent.

\smallskip

Other examples of equal order pairs arise taking into account the following easy observation: if $(G,N)$ and $(N,M)$ are equal order pairs so that $M$ is normal in $G$, then $(G,M)$ is an equal order pair. So, for instance, if $G$ is a Frobenius group whose kernel $N$ is an elementary abelian $p$-group and there exists a nontrivial normal subgroup $M$ of $G$ with $M \subseteq N$, then $(G,M)$ is an equal order pair. %We can also take $G$ to be a Frobenius group where $N$ is a Suzuki $2$-group of order $2^a$ and a Frobenius complement has order dividing $2^a - 1$.  In this case, we obtain $(G,Z(N))$ is an equal order pair. 

\smallskip
As the last example for this section, let $p$ be a prime number and consider a group $K$ that admits a fixed-point free automorphism $\alpha$ of order $p$; also, let $P = \langle x\rangle$ be a cyclic $p$-group. We can define an action of $P$ on $K$ by $k^x = \alpha (k)$ for every element $k \in K$, and then we set $G$ to be the semidirect product $K \rtimes P$ formed accordingly. If $M$ is the subgroup of index $p$ in $P$, then every element in $G \setminus KM$ is conjugate to an element of $P \setminus M$, and so $(G,KM)$ is an equal order pair.

\section{When $G$ is nilpotent}\label{Sect 3}

Our aim in this section is to describe equal order pairs $(G,N)$ such that the group $G$ is nilpotent.  As mentioned in the Introduction, this happens only when $G$ is in fact a group of prime-power order.

\begin{proposition}\label{nilpotent} 
Let $G$ be a nilpotent group having a subgroup $N$ such that $(G,N)$ is an equal order pair. Then $G$ is a $p$-group for some prime $p$.
\end{proposition}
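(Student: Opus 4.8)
The plan is to combine two standard facts about nilpotent groups with the divisibility statement of Lemma~\ref{two}. Since $G$ is nilpotent and $N$ is a nontrivial normal subgroup, the intersection $N \cap \zent G$ is nontrivial; fix a prime $p$ dividing its order and a central element $y \in N$ of order $p$. Because $y$ is central, it lies in $\cent N x$ for \emph{every} $x \in G \setminus N$, and since $(G,N)$ is an equal order pair the hypothesis of Lemma~\ref{two} holds for each such $x$. The ``in particular'' clause of that lemma then gives that $p$ divides ${\rm o}(xN)$ for every $x \in G \setminus N$. As every nontrivial element of $G/N$ has the form $xN$ with $x \in G \setminus N$, this means $p$ divides the order of every nontrivial element of $G/N$; by Cauchy's theorem, $G/N$ is a $p$-group.

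Next I would promote this to the statement that $N \cap \zent G$ is itself a $p$-group. Indeed, were some prime $p' \neq p$ also to divide $|N \cap \zent G|$, the very same argument applied to a central element of order $p'$ would force $G/N$ to be a $p'$-group as well; but $N$ is proper, so $G/N \neq 1$, and it cannot be simultaneously a $p$- and a $p'$-group. Hence there is a single prime $p$ for which both $N \cap \zent G$ and $G/N$ are $p$-groups.

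It then remains to transfer this information from the quotient to $G$ itself. Writing $G = P \times H$ with $P$ a Sylow $p$-subgroup and $H = \oh{p'}G$ the characteristic Hall $p'$-subgroup, the decomposition $N = (N \cap P) \times (N \cap H)$ shows that $G/N \cong (P/(N\cap P)) \times (H/(N \cap H))$; since this is a $p$-group while $H/(N\cap H)$ is a $p'$-group, we must have $H \subseteq N$. If $H$ were nontrivial it would, being a nontrivial normal subgroup of the nilpotent group $G$, meet $\zent G$ nontrivially, producing a central element of some prime order $q \neq p$ inside $H \subseteq N$ and hence inside $N \cap \zent G$, contradicting that $N \cap \zent G$ is a $p$-group. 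Therefore $H = 1$ and $G = P$ is a $p$-group.

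I expect the only delicate point to be the final transfer from $G/N$ to $G$: knowing that $G/N$ is a $p$-group gives no direct control over the primes dividing $|N|$, and the sole available lever is the interplay between $N$ and $\zent G$. The key is therefore the double use of the principle that nontrivial normal subgroups of a nilpotent group meet its center---first to feed central prime-order elements of $N$ into Lemma~\ref{two}, and then to eliminate the $p'$-part $H$. The remaining steps are routine bookkeeping with the coprime direct-product decomposition.
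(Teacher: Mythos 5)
Your proposal is correct and follows essentially the same route as the paper's proof: both use Lemma~\ref{two} on a central element of prime order in $N \cap \zent G$ to show $G/N$ is a $p$-group, then rule out a second prime in $N \cap \zent G$ by the properness of $N$. The only (cosmetic) difference is the endgame: where you split $G = P \times H$ with $H = \oh{p'}G$ and kill $H$ via $H \cap \zent G \subseteq N \cap \zent G$, the paper instead notes that each Sylow subgroup of $N$ is normal in $G$ and meets $\zent G$ nontrivially, forcing $N$ to be a $p$-group --- both steps rest on the same principle that nontrivial normal subgroups of a nilpotent group meet its center.
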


\begin{proof}
As $G$ is nilpotent, we know that $N \cap \zent G$ is nontrivial. Take a prime $p$ dividing its order; if $x$ is an element outside $N$, by Lemma \ref{two} we have that $p$ divides the order of $xN$. In particular every nontrivial element of $G/N$ has order divisible by $p$, so $G/N$ is a $p$-group. Since we can do this with every prime divisor of $|N \cap \zent G|$, we get that $N\cap \zent G$ is a $p$-group as well and, since every Sylow subgroup of $N$ is normal in $G$ and intersects $\zent G$ nontrivially, we must have that $N$ is a $p$-group. It follows that $G$ is a $p$-group.
\end{proof}

The next proposition characterizes the central subgroups $N$ of a nilpotent group $G$ such that $(G,N)$ is an equal order pair. Recall that, if $G$ is a $p$-group for a prime $p$ and $i$ is a nonnegative integer, then the subgroup ${\bf{\Omega}}_i(G)$ is defined as $\langle x \in G\; :\; x^{p^i} =1 \rangle$.  This defines an ascending chain of characteristic subgroups of $G$.  

\begin{proposition} \label{nilpotent pairs}
Let $G$ be a $p$-group for a prime number $p$, and let $N$ be a proper nontrivial subgroup of $G$.  Then the following conclusions hold.
\begin{enumeratei}
\item Suppose $N$ is central in $G$. Then $(G,N)$ is an equal order pair if and only if there exists a positive integer $i$ such that ${\bf{\Omega}}_{i} (\zent G) \geq N \geq {\bf{\Omega}}_{i-1}(G)$.
\item Suppose $G$ is abelian.  Then $(G,N)$ is an equal order pair if and only if there exists a positive integer $i$ such that  ${\bf{\Omega}}_{i}(G) \geq N \geq {\bf{\Omega}}_{i-1}(G)$.
\end{enumeratei}
\end{proposition}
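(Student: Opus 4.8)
The plan is to prove (a) directly and obtain (b) for free: when $G$ is abelian every subgroup is central and $\zent G = G$, so (b) is precisely (a) read with ${\bf{\Omega}}_i(\zent G) = {\bf{\Omega}}_i(G)$. The computational engine behind (a) is the elementary observation that for commuting elements $a,b$ of a $p$-group with ${\rm o}(a)=p^s$, ${\rm o}(b)=p^t$ and $s\neq t$ one has ${\rm o}(ab)=p^{\max(s,t)}$: indeed $(ab)^{p^{\max(s,t)}}=a^{p^{\max(s,t)}}b^{p^{\max(s,t)}}=1$, while raising $ab$ to the power $p^{\max(s,t)-1}$ kills the factor of smaller order and leaves the other nontrivial. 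Throughout, $N$ is central, so every $n\in N$ commutes with every $x\in G$ and this observation applies to the pair $x,n$.

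For the forward direction I would let $p^i$ be the exponent of $N$ (a positive power, since $N$ is nontrivial). As $N\leq\zent G$ and every element of $N$ has order dividing $p^i$, we get $N\leq{\bf{\Omega}}_i(\zent G)$ at once. For the reverse containment, take any $x\in G$ with ${\rm o}(x)\leq p^{i-1}$ and suppose $x\notin N$; picking $n\in N$ with ${\rm o}(n)=p^i$, the element $xn\in xN$ has order $p^i$ by the observation above (the exponents differ), contradicting that $xn$ and $x$ share the same order. Hence every element of order dividing $p^{i-1}$ lies in $N$, and since such elements generate ${\bf{\Omega}}_{i-1}(G)$ we conclude $N\geq{\bf{\Omega}}_{i-1}(G)$.

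For the backward direction, assume ${\bf{\Omega}}_i(\zent G)\geq N\geq{\bf{\Omega}}_{i-1}(G)$ and fix $x\in G\setminus N$ together with an arbitrary $n\in N$. Since ${\bf{\Omega}}_{i-1}(G)\subseteq N$ we have $x\notin{\bf{\Omega}}_{i-1}(G)$, so ${\rm o}(x)=p^s$ with $s\geq i$, while ${\rm o}(n)=p^t$ with $t\leq i\leq s$. When $t<s$ the observation gives ${\rm o}(xn)=p^s={\rm o}(x)$ directly. The delicate case, which I expect to be the only real obstacle, is $t=s$, which forces $s=t=i$: here commuting elements of equal order may a priori multiply to something of smaller order, so the naive computation fails. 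I would dispatch it by contradiction, noting that if ${\rm o}(xn)<p^i$ then $xn\in{\bf{\Omega}}_{i-1}(G)\subseteq N$, whence $x=(xn)n^{-1}\in N$, against $x\notin N$. Thus ${\rm o}(xn)={\rm o}(x)$ for every $n\in N$, so $(G,N)$ is an equal order pair, completing (a) and hence (b).
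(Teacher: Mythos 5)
Your proof is correct and follows essentially the same route as the paper: pick a suitable $i$ (you take $p^i=\exp(N)$, the paper takes the minimal order of an element of $G\setminus N$; either works) and in both directions compute orders of products of commuting elements, with (b) obtained from (a) since $\zent G=G$ in the abelian case. In fact you are more careful than the paper at one point: the paper's converse flatly asserts ${\rm o}(xn)={\rm lcm}({\rm o}(x),{\rm o}(n))$ for the commuting pair $x,n$, which is not literally true when ${\rm o}(x)={\rm o}(n)=p^i$ (commuting elements of equal $p$-power order can multiply to something of smaller order), and your contradiction argument for that delicate case --- if ${\rm o}(xn)<p^i$ then $xn\in{\bf{\Omega}}_{i-1}(G)\subseteq N$, forcing $x\in N$ --- is exactly the patch needed to make that step of the published proof airtight.
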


\begin{proof}
Assume that $(G,N)$ is an equal order pair, and let $x$ be an element of minimal order, say $p^i$, among those in $G \setminus N$. Then we clearly have ${\bf{\Omega}}_{i-1}(G) \subseteq N$. Furthermore, as all elements in $xN$ have the same order as $x$, we see that ${\rm o}(x) \geq {\rm exp}(N)$. Thus $N \subseteq {\bf{\Omega}}_{i} (\zent G)$.  

Conversely, assume ${\bf{\Omega}}_{i}(\zent G)\supseteq N\supseteq {\bf{\Omega}}_{i-1}(G)$ where $i$ is a suitable positive integer, and let $x \in G \setminus N$:  we need to show that, for every $n\in N$, the order of $xn$ is the same as the order of $x$.  Since $x$ and $n$ commute, we know that ${\rm o}(xn) = {\rm lcm} ({\rm o}(x),{\rm o}(n))$. Also, the conditions ${\bf{\Omega}}_{i-1} (G) \subseteq N$ and $x \in G \setminus N$ imply that $p^i$ divides ${\rm o}(x)$.   On the other hand, ${\rm o}(n)$ divides $p^i$ because $N \subseteq {\rm{\Omega}}_i (\zent G)$, and so ${\rm o}(n)$ divides ${\rm o}(x)$.  It follows that ${\rm o}(xn) = {\rm lcm} ({\rm o}(x),o(n)) = {\rm o}(x)$, as wanted. Conclusion (b) is an immediate consequence of (a).
\end{proof}

In view of the previous result, if $G$ is a $p$-group and $N \subseteq \zent G$ has exponent $p$, then $(G,N)$ is an equal order pair; as an example we can consider a Suzuki $2$-group $G$ and take $N = \zent G = {\bf{\Omega}}_1 (G)$ (note that Suzuki groups of Type A do not produce Camina pairs in this way; for information about Suzuki groups and in particular those of Type A, see \cite{Higman}). On the other hand, if the exponent of the whole $G$ is $p$, then clearly every proper, nontrivial, normal subgroup $N$ of $G$ yields an equal order pair $(G,N)$.

\begin{remark} \label{gag} By Theorem~6.3 of \cite{gagola}, for every $p$-group $P$ there exists a Camina pair $(X,N)$ such that $N = \zent X$ and $P$ is isomorphic to a subgroup $G/N$ of $X/N$. Since $(X,N)$ is also an equal order pair and, by Lemma~\ref{subgroups}, $(G,N)$ is an equal order pair as well, it follows that for every $p$-group $P$ there exist a $p$-group $G$ and a central subgroup $N$ of $G$ such that $(G,N)$ is an equal order pair with $P\cong G/N$. 

In other words, no restrictions can be expected on the structure of the factor group $G/N$ when $G$ is a $p$-group and $(G,N)$ is an equal order pair. In particular, we cannot bound the derived length of a solvable group $G$ which has a subgroup $N$ such that $(G,N)$ is an equal order pair.
\end{remark}

Moving to the case when the subgroup $N$ is not necessarily central, we briefly discuss a couple of situations. The {\it{Hughes subgroup}} of a $p$-group $G$ is defined as the subgroup $H$ of $G$ generated by the elements of order divisible by $p^2$; it is clearly a characteristic subgroup of $G$.  (We refer the reader to \cite{HVL} and the references within for the history of the Hughes' subgroups problem.)  If $H$ is proper and nontrivial, then $(G,H)$ is an equal order pair since every element in $G \setminus H$ has order $p$. 
 
Also, let $G$ be a $p$-group with exponent $p^n$, such that ${\bf \Omega}_{n-1} (G)$ is properly contained in $G$. Then, for any proper normal subgroup $N$ of $G$ with ${\bf \Omega}_{n-1} (G) \subseteq N$, we see that $(G,N)$ is an equal order pair.  Note that also in this case all the elements in $G \setminus N$ have the same order (namely, the exponent $p^n$ of $G$).  In fact, it is not difficult to see that we can characterize this situation as follows: if $G$ is a $p$-group of exponent $p^n$ and $N$ is a proper, nontrivial, normal subgroup of $G$, then $(G,N)$ is an equal order pair where all the elements outside $N$ have order $p^n$ if and only if ${\bf \Omega}_{n-1} (G) \subseteq N$.

%\begin{lemma}
%Let $G$ be a $p$-group for a prime $p$ and exponent $p^n$ for the positive integer $n$, and let $N$ be a proper, nontrivial normal subgroup of $G$.  If $(G,N)$ is an equal order pair where all the elements outside $N$ have order $p^k$ where $k < n$, then $\Omega_{k-1} (G) \le N$ and $N/\Omega_{k-1} (G)$ contains the Hughes subgroup of $G/\Omega_{k-1}(G)$.
%\end{lemma} 
 
%\begin{proof}
%Suppose $(G,N)$ is an equal order pair where all the elements outside $N$ have order $p^k$.  This implies that $\Omega_{k-1} (G) \le N$.  Observe that every element of $G/\Omega_{k-1} (G)$ outside $N/\Omega_{k-1}(G)$ has order $p$, so $N/\Omega_{k-1} (G)$ contains the Hughes subgroup of $G/\Omega_{k-1} (G)$.  %Conversely, suppose $\Omega_{k-1}(G) \le N$ and $N/\Omega_{k-1} (G)$ contains the Hughes subgroup of $G/\Omega_{k-1} (G)$.  Since $\Omega_{k-1}(G) \le N$, we see that $p^k$ divides the order of every element in $G \setminus N$.  On the hand, if $x \in G \setminus N$, then $x^p \in \Omega_{k-1} (G)$ since $N/\Omega_{k-1} (G)$ is contains the Hughes subgroup and $(x)
%\end{proof} 

\section{Theorem~A}\label{Sect 4}

We  work towards a proof of Theorem~A, that will be preceded with two easy lemmas.  Recall that, as mentioned in the Introduction, if a group $G$ has a proper, nontrivial subgroup $H$ and a proper normal subgroup $L$ of $H$ such that $H\cap H^g\subseteq L$ for all $g\in G \setminus H$, then $(G,H,L)$ is called a Frobenius-Wielandt triple. Wielandt formulated this definition as a generalization of Frobenius groups.  Section 3 of \cite{lewis} and the references there-in give an exposition of Frobenius-Wielandt triples.  We note that in \cite{CMS} it is shown that Frobenius-Wielandt triples are intimately connected to Camina pairs $(G,N)$ where $G$ is not a Frobenius group and $G/N$ is a $p$-group.  This first lemma shows that Frobenius-Wielandt triples arise when we have a normal subgroup such that every element outside this normal subgroup has $p$-power order.  This is one of the situations studied by the third author in \cite{lewis}, and by Shao and Jiang in \cite{SJ}.

\begin{lemma}\label{seven} 
Let $(G,N)$ be an equal order pair and $p$ a prime number. Assume that a Sylow $p$-subgroup $P$ of $G$ is not contained in $N$, and that $N$ is not a $p$-group. Then $(PN, P, P \cap N)$ is a Frobenius-Wielandt triple. 
\end{lemma}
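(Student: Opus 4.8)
The plan is to reduce immediately to the case $G=PN$. Since $PN\not\sbs N$ and $PN\cap N=N$ is nontrivial, Lemma~\ref{subgroups} shows that $(PN,N)$ is again an equal order pair, and here $PN/N\cong P/(P\cap N)$ is a $p$-group. Hence, by Lemma~\ref{two}, for $x\in PN\setminus N$ the order ${\rm o}(x)$ is divisible only by primes dividing the $p$-power ${\rm o}(xN)$, so \emph{every element of $PN\setminus N$ is a $p$-element}. I would record two consequences for $x\in P\setminus N$: first $\cent{N}{x}$ is a $p$-group, because any element of prime order in it has order dividing the $p$-power ${\rm o}(xN)$ by Lemma~\ref{two}, whence by Cauchy $\cent{N}{x}$ has no prime other than $p$; second, $\cent{PN}{x}$ is a $p$-group, since $\cent{PN}{x}\cap N=\cent{N}{x}$ is a $p$-group and $\cent{PN}{x}/(\cent{PN}{x}\cap N)$ embeds in the $p$-group $PN/N$. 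The structural requirements are then immediate: $P$ is proper in $PN$ (as $N\not\sbs P$, because $N$ is not a $p$-group), $P\neq 1$ (as $P\not\sbs N$), and $P\cap N\nor P$ is proper.

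The heart of the matter is the inclusion $P\cap P^{g}\sbs P\cap N$ for all $g\in PN\setminus P$. I would argue by contradiction, assuming $x\in (P\cap P^{g})\setminus N$. Writing $g=vn$ with $v\in P$, $n\in N$, one has $P^{g}=P^{n}$ and $g\in P\iff n\in P$; so we may assume $x\in P\setminus N$, $n\in N\setminus P$, and $x^{n^{-1}}=nxn^{-1}\in P$. The goal is now to force $n\in P$, the contradiction. The mechanism I have in mind is this: the two $p$-elements $x$ and $x^{n^{-1}}$ lie in $P$ and are $N$-conjugate; if one can realise this conjugacy \emph{inside} $P$, say $x^{n^{-1}}=x^{u}$ with $u\in P$, then $(un)x(un)^{-1}=x$, i.e.\ $un\in\cent{PN}{x}$. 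Since $\cent{PN}{x}$ is a $p$-group, the moment we know $\cent{PN}{x}\sbs P$ (equivalently $\cent{N}{x}\sbs P\cap N$) we get $un\in P$ and hence $n\in P$, as wanted.

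Thus the proof reduces to two control statements for $x\in P\setminus N$: that $P$ controls the $N$-fusion of $x$, and that $\cent{PN}{x}$ sits inside the ambient Sylow subgroup $P$. For intuition I find it useful to reformulate the whole inclusion as a statement about the transitive action of $PN$ on the $p'$-set $\Omega=PN/P$ (with point stabiliser $P$): the Frobenius--Wielandt condition says precisely that each $x\in P\setminus N$ fixes only the base point, and the fixed-point count expresses this in terms of $|\cent{PN}{x}|$ and the fusion of $x$. To dispose of the ``$p'$-direction'' I would pass to the maximal $p'$-quotient $N/\Oh{p'}{N}$, on which $\gen{x}$ acts coprimely: the relation $nxn^{-1}\in P$ forces $\bar n\in\cent{N/\Oh{p'}{N}}{x}$, and the fact that $\cent{N}{x}$ contains no nontrivial $p'$-element is exactly the kind of fixed-point-free behaviour one wants to exploit here.

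The genuinely delicate point — and the main obstacle — is the control of $p$-fusion needed to conjugate $x$ and $x^{n^{-1}}$ within $P$ and to place $\cent{N}{x}$ in the specific Sylow subgroup $P\cap N$; the naive coprime lifting fails because $\gen{x}$ is \emph{not} coprime to the kernel $\Oh{p'}{N}$. This is exactly the configuration of a normal subgroup all of whose outside elements have $p$-power order, which is analysed by the third author in \cite{lewis} and by Shao and Jiang in \cite{SJ}; I would invoke their Frobenius--Wielandt conclusion directly, or reprove it by an Alperin-type local fusion argument, the point being that our hypothesis makes all the relevant centralizers $p$-groups and hence the pertinent local subgroups $p$-closed, which is what forces the fusion to be controlled in $P$.
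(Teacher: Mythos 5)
Your proposal is correct and, in its essential content, takes the same route as the paper: you restrict to $(PN,N)$ via Lemma~\ref{subgroups}, use Lemma~\ref{two} to conclude that every element of $PN\setminus N$ is a $p$-element, and then invoke the Frobenius--Wielandt result of \cite{lewis} (the paper cites Lemma~3.3 there), which is exactly your final fallback. The intervening sketch of a self-contained fusion argument is incomplete as you acknowledge (the control of $N$-fusion in $P$ and the containment $\cent{PN}{x}\sbs P$ are left unproved), but it is superfluous once the cited result is applied, so the proof stands.
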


\begin{proof} 
Observe that $(PN, N)$ is an equal order pair by Lemma \ref{subgroups}.  In particular, if $y \in PN \setminus N$,
then we have $y = xn$ for some $x \in P$ and $n \in N$ so that ${\rm o}(y) = {\rm o}(x)$; hence, $y$ is a $p$-element and it is therefore conjugate in $G$ to an element of $P$. The claim now follows by an application of Lemma~3.3 in \cite{lewis}.
\end{proof}

We now consider the possibility that, for an equal order pair $(G,N)$, there is a subgroup of $G$ that intersects $N$ trivially and centralizes $N$.  In particular, this applies if $G$ has a minimal normal subgroup outside of $N$.  %This can be thought of as a generalization of Lemma \ref{pq}.

\begin{lemma}\label{product}
Let $(G,N)$ be an equal order pair. If $M$ is a nontrivial subgroup of $G$ such that $M \cap N = 1$ and $M \subseteq\cent G N$, then $M$ and $N$ are both $p$-groups for a prime $p$, and $N$ has exponent $p$.
\end{lemma}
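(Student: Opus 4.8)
The plan is to exploit the two hypotheses $M\cap N=1$ and $M\sbs\cent G N$ in tandem, since they force every element of $M$ to commute with all of $N$ while sharing no nontrivial power with it. Concretely, for a nontrivial $m\in M$ and any $n\in N$ the elements $m$ and $n$ commute and $\gen{m}\cap N\sbs M\cap N=1$, so ${\rm o}(mn)={\rm lcm}({\rm o}(m),{\rm o}(n))$; since $m\notin N$, the equal order pair hypothesis applied to the coset $mN$ gives ${\rm o}(mn)={\rm o}(m)$, whence ${\rm o}(n)\mid{\rm o}(m)$ for every $n\in N$. Equally important, because $m$ centralizes $N$ we have $\cent N m=N$, and because every nontrivial element of $M$ lies in $G\setminus N$ and has all of its coset of the same order, the hypothesis of Lemma~\ref{two} is satisfied by $m$ and by each of its powers (which again lie in $M$).

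The core step is to prove that ${\rm o}(m)$ is a prime power for every nontrivial $m\in M$ and to identify the governing prime. Here I would fix a prime $q$ dividing ${\rm o}(m)$ and pass to $m_q:=m^{{\rm o}(m)/q}$, an element of $M$ of order $q$; since $\gen{m_q}\cap N=1$, the order of $m_q$ modulo $N$ is exactly $q$, while $\cent N{m_q}=N$. The second assertion of Lemma~\ref{two} (with $m_q$ in the role of $x$) then forces every element of prime order in $N$ to have order $q$, so that $N$ is a $q$-group. As $N$ is nontrivial, it can be a $q$-group for only one prime; hence ${\rm o}(m)$ cannot be divisible by two distinct primes, so ${\rm o}(m)$ is a power of a single prime $q=q(m)$, and $N$ is a $q(m)$-group. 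Letting $p$ denote the unique prime for which $N$ is a $p$-group, this yields $q(m)=p$ for every nontrivial $m$; consequently every nontrivial element of $M$ has $p$-power order, so $M$ is a $p$-group, and $N$ is a $p$-group.

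It remains to sharpen the conclusion on $N$ from ``$p$-group'' to ``exponent $p$''. For this I would take a nontrivial $m\in M$, now of $p$-power order $p^a$, and replace it by $m_p:=m^{p^{a-1}}$, which has order $p$ and lies in $M\setminus N$. Applying the equal order pair hypothesis to the coset $m_pN$ exactly as in the first paragraph, for each $n\in N$ we obtain $p={\rm o}(m_p n)={\rm lcm}(p,{\rm o}(n))$, so ${\rm o}(n)\mid p$. Since $N$ is a nontrivial $p$-group, this forces ${\rm exp}(N)=p$, completing the argument.

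The main obstacle is the middle step: upgrading the local divisibility information ${\rm o}(n)\mid{\rm o}(m)$ to the global statement that one prime simultaneously governs $M$ and $N$. The decisive leverage is the centralizing hypothesis $M\sbs\cent G N$, which promotes $\cent N{x}$ to all of $N$ for $x\in M$ and thereby lets the second part of Lemma~\ref{two} pin down the prime content of $N$; without it one could not deduce that $N$ is a $q$-group, and the whole structure would collapse.
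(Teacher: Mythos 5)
Your proof is correct and follows essentially the same route as the paper's: pick an element of prime order $q$ in $M$ (which lies outside $N$, centralizes $N$, and generates a cyclic group meeting $N$ trivially), use the equal order hypothesis to get $q={\rm o}(xn)={\rm lcm}(q,{\rm o}(n))$, hence $n^q=1$ for all $n\in N$, and let the nontriviality of $N$ pin down a single prime governing both $M$ and $N$. Your detour through the second assertion of Lemma~\ref{two} is harmless but unnecessary: the lcm computation applied to $m_q$ already gives ${\rm o}(n)\mid q$ directly, which is how the paper obtains the $q$-group conclusion and the exponent-$p$ statement in a single stroke, without needing your separate final paragraph.
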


\begin{proof}
Let $p$ be a prime divisor of $|M|$ and let $x$ be an element of $M$ having order $p$. For every element $n\in N$, we have $p = {\rm o}(x) = {\rm o}(xn) = {\rm lcm} (p,{\rm o}(n))$, hence $n^p=1$.  It follows that $N$ is a $p$-group of exponent $p$ and, since this holds for every prime divisor $p$ of $|M|$, clearly $M$ is a $p$-group as well.
\end{proof}

We are now in a position to prove Theorem~A, that we state again.

\begin{ThmA}
Let $(G,N)$ be an equal order pair. Then, denoting by $\fit G$ the Fitting subgroup of $G$, either $N$ is contained in $\fit G$ or $\fit G$ is contained in $N$. Moreover, if $N$ is properly contained in $\fit G$ then $\fit G$ has prime-power order.
\end{ThmA}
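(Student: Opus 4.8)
The plan is to treat the two assertions separately, observing first that the ``Moreover'' part follows almost immediately from the earlier results. Suppose $N$ is properly contained in $\fit G$. Then $\fit G \not\sbs N$ while $\fit G \cap N = N$ is nontrivial, so Lemma~\ref{subgroups} applies and shows that $(\fit G, N)$ is itself an equal order pair. Since $\fit G$ is nilpotent, Proposition~\ref{nilpotent} forces $\fit G$ to be a $p$-group for some prime $p$. This disposes of the second statement, so the whole difficulty lies in the dichotomy.

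For the dichotomy I would argue by contraposition. Writing $F=\fit G$, note that $N\sbs F$ is equivalent to $N$ being nilpotent (a normal nilpotent subgroup lies in the Fitting subgroup, and conversely any subgroup of $F$ is nilpotent); thus it suffices to prove that if $N$ is not nilpotent then $F\sbs N$. Assume then that $N$ is not nilpotent. By Proposition~\ref{pq}(d) together with Lemma~\ref{two}, every element of $G\setminus N$ has prime-power order, and by the contrapositive of Proposition~\ref{pq}(b) none of these elements has prime order; in particular $N$ is not a $p$-group for any prime (being non-nilpotent). I record the two guiding facts that no element of prime order lies outside $N$, and that $\cent N w$ is a $p$-group whenever $w\in G\setminus N$ is a $p$-element: indeed, if $n\in\cent N w$ had order $q\neq p$, then $wn$ would have order ${\rm o}(w)\cdot q\neq{\rm o}(w)$, against the equal order hypothesis.

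The next step is a centraliser reduction. If $M$ is any nontrivial normal subgroup of $G$ with $M\cap N=1$, then $[M,N]\sbs M\cap N=1$, so $M\sbs\cent G N$ and Lemma~\ref{product} forces $N$ to be a $p$-group, a contradiction. Hence every minimal normal subgroup of $G$ is contained in $N$; a similar argument, applied to a suitable prime-power component of an element of $\cent G N\setminus N$, also yields $\cent G N\sbs N$. Suppose now, for a contradiction, that $F\not\sbs N$, and pick a prime $p$ with $\oh p G\not\sbs N$. Since the elementary abelian normal subgroup ${\bf\Omega}_1(\zent{\oh p G})$ contains a minimal normal subgroup of $G$, it meets $N$; and if it were not contained in $N$ it would supply an element of order $p$ outside $N$, contradicting the facts above. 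Thus ${\bf\Omega}_1(\zent{\oh p G})\sbs N$, and likewise $\cent{\oh p G}N\sbs N$.

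This leaves the core case: $\oh p G\not\sbs N$ while all its ``bottom'' order-$p$ central elements, and all its elements centralising $N$, lie in $N$, with $N$ non-nilpotent. Here I would invoke Lemma~\ref{seven}: choosing a Sylow $p$-subgroup $P\supseteq\oh p G$ (so $P\not\sbs N$) and using that $N$ is not a $p$-group, the triple $(PN,P,P\cap N)$ is a Frobenius--Wielandt triple. I expect the main obstacle to be exploiting this Frobenius--Wielandt structure, in combination with the fixed-point information that a $p$-element $w\in\oh p G\setminus N$ centralises no nontrivial $q$-element of $N$ for $q\neq p$, so as to contradict the non-nilpotency of $N$: concretely, one wants to show that the $p'$-part of $N$ is acted on fixed-point-freely by a suitable $p$-element, forcing $N$ to possess a nilpotent normal $p$-complement and ultimately to be nilpotent. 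Making this last implication precise --- rather than the clean reductions above --- is where the real work of the proof is concentrated.
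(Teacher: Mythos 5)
Your reductions are sound, and they closely track the paper's own proof: the ``Moreover'' claim via Lemma~\ref{subgroups} and Proposition~\ref{nilpotent} is exactly the paper's argument; the contrapositive reformulation ($N\sbs\fit G$ if and only if $N$ is nilpotent) is legitimate; and the use of Lemma~\ref{product} to force minimal normal subgroups (and $\cent GN$) into $N$ matches the paper's handling of the case $\fit G\cap N=1$. The genuine gap is the final step, which you explicitly leave open. Having arrived, exactly as the paper does, at the Frobenius--Wielandt triple $(PN,P,P\cap N)$ from Lemma~\ref{seven}, you propose to extract a fixed-point-free action on the $p'$-part of $N$ so as to manufacture a normal $p$-complement and contradict the non-nilpotency of $N$. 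That target is both harder than necessary and not supported by the hypotheses: a $p$-element outside $N$ merely fails to centralize nontrivial $q$-elements of $N$ (Lemma~\ref{two}), and there is no route given from that to a fixed-point-free coprime action on a normal $p$-complement whose existence is itself unestablished. Your intermediate observations about $\Omega_1(\zent{\oh pG})$ and $\cent GN$, while correct, do not feed into any completion.

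The actual conclusion is immediate from the defining containment of a Frobenius--Wielandt triple and needs no action argument at all. Since $N$ is not a $p$-group, $N\not\sbs P$, so there exists $g\in PN\setminus P$; by definition of the triple, $P\cap P^g\sbs P\cap N$. But $\oh pG$ is normal in $G$ and lies in every Sylow $p$-subgroup of $G$, so
$$\oh pG=(\oh pG)^g\sbs P\cap P^g\sbs P\cap N\sbs N,$$
contradicting your choice of a prime $p$ with $\oh pG\not\sbs N$. This one-line computation is precisely how the paper concludes, with $\fit G$ in place of $\oh pG$ (the paper first uses Lemma~\ref{subgroups} and Proposition~\ref{nilpotent} to see that $\fit G$ is a $p$-group when $\fit G\cap N\neq1$; your variant with $\oh pG$ even avoids that detour). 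So the ``real work'' you anticipated is in fact a single observation for which you already had every ingredient in place.
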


\begin{proof} 
Assume, arguing by contradiction, that neither $\fit G \subseteq N$ nor $N \subseteq \fit G$.  Assume also, for the moment, $\fit G \cap N \neq 1$.  Then, by Lemma~ \ref{subgroups}, $(\fit G, \fit G \cap N)$ is an equal order pair.  Since $\fit G$ is nilpotent, Proposition~\ref{nilpotent} yields that $\fit G = \oh p G $ is a $p$-group for a prime $p$ and hence, it is contained in any Sylow $p$-subgroup of $G$. 

Now, let $P$ be a Sylow $p$-subgroup of $G$. Since $N$ is not contained in $\fit G$, we have that $N$ is not a $p$-group; moreover, $P$ is not contained in $N$, as otherwise $\fit G$ would be contained in $N$. By Lemma~\ref{seven}, $(PN, P, P \cap N)$ is a Frobenius-Wielandt triple.  But then, for any element $x \in PN \setminus P$, we have that $P^x \cap P$ is contained in $P \cap N$. Thus, $\fit G$ is contained in $P \cap N$ and hence in $N$, contradicting our assumptions.

Finally, suppose $\fit G \cap N = 1$. Since our hypothesis implies $\fit G \neq 1$, we must have $\fit N = 1$ which implies that $N$ is not solvable.  In particular $N$ is not a $p$-group, against Lemma~\ref{product}. This contradiction completes the proof of our first claim. As for the claim that $\fit G$ is a group of prime-power order whenever it properly contains $N$, this follows immediately from Lemma~\ref{subgroups} and Proposition~\ref{nilpotent}. 
\end{proof}

\section{When $N$ is nilpotent}\label{Sect 5}

In this section, we discuss equal order pairs $(G,N)$ such that $N$ is nilpotent; as the first step, we will derive Theorem~B from the following more general result. 

\begin{proposition}\label{nilpotent N}
Let $(G,N)$ be an equal order pair, $q$ a prime divisor of $|G/N|$, $p\neq q$ a prime divisor of $|N|$ and $P$ a Sylow $p$-subgroup of $N$. Denoting by $Q_0$ a Sylow $q$-subgroup of $\norm G P$, assume that $P_0=\cent P{Q_0\cap N}$ is nontrivial. Then $P_0 Q_0/(Q_0\cap N)$ is a Frobenius group with kernel $P_0(Q_0\cap N)/(Q_0\cap N)$ and $Q_0/(Q_0\cap N)$ as a Frobenius complement. In particular, the Sylow $q$-subgroups of $G/N$ are either cyclic or generalized quaternion groups.
\end{proposition}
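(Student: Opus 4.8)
The plan is to exhibit $P_0 Q_0/R$, where $R:=Q_0\cap N$, as a Frobenius group by producing a fixed-point-free action of $Q_0/R$ on a copy of $P_0$, and then to read off the structural consequence from the classification of Frobenius complements.

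First I would record the relevant normality and Sylow facts. Since $P$ is a Sylow $p$-subgroup of the normal subgroup $N$, the Frattini argument gives $G=N\,\norm G P$, whence $G/N\cong\norm G P/\norm N P$; as $Q_0$ is a Sylow $q$-subgroup of $\norm G P$ and $\norm N P=N\cap\norm G P$ is normal in $\norm G P$, the subgroup $R=Q_0\cap N$ is a Sylow $q$-subgroup of $\norm N P$, and $Q_0/R$ is (isomorphic to) a Sylow $q$-subgroup of $G/N$. Moreover $R\nor Q_0$ because $N\nor G$; since $Q_0$ normalizes both $P$ and $R$, it normalizes $P_0=\cent P R$; and $R$ centralizes $P_0$ by the very definition of $P_0$. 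These observations show that $P_0$ and $R$ are both normalized by $P_0 Q_0$, that $P_0 R\nor P_0 Q_0$, and that conjugation by $Q_0$ induces an action of $Q_0/R$ on $P_0$ (well defined precisely because $R$ centralizes $P_0$).

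The crux is fixed-point-freeness, and this is where the equal order hypothesis enters, through Lemma~\ref{two}. Let $x\in Q_0\setminus R$; then $x$ is a nontrivial $q$-element lying in $G\setminus N$, so ${\rm o}(xN)$ is a power of $q$. If $\cent P x$ were nontrivial it would contain an element of order $p$ centralizing $x$, and Lemma~\ref{two} would force $p$ to divide ${\rm o}(xN)$, which is impossible since $p\neq q$. Hence $\cent P x=1$, and in particular $\cent{P_0}x=1$ for every $x\in Q_0\setminus R$.

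Finally I would assemble the Frobenius structure in the quotient. As $P_0$ is a $p$-group and $R$ a $q$-group, $P_0\cap R=1$, so $P_0 R/R\cong P_0$; a short order argument ($R\le P_0 R\cap Q_0$ is a $q$-group inside $P_0 R$, whose $q$-part equals $|R|$) gives $P_0 R\cap Q_0=R$, hence $(P_0 R/R)\cap(Q_0/R)=1$, while plainly $(P_0 R/R)(Q_0/R)=P_0 Q_0/R$. For $u\in P_0$ and $x\in Q_0\setminus R$, the relation $u^x u^{-1}\in R$ forces $u^x=u$, since $u^x u^{-1}\in P\cap R=1$; thus $\cent{P_0 R/R}{xR}=1$ is equivalent to $\cent{P_0}x=1$, which was just established. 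Therefore $P_0 Q_0/R$ is a Frobenius group with kernel $P_0 R/R$ and complement $Q_0/R$. Since $Q_0/R$ is a $q$-group that is a Frobenius complement, the classical theorem on Frobenius complements shows it is cyclic or generalized quaternion; as $Q_0/R$ is a Sylow $q$-subgroup of $G/N$, the final assertion follows. The only real obstacle is bookkeeping, namely checking that the choice $P_0=\cent P R$ makes the passage to $P_0 Q_0/R$ yield a genuine semidirect product; the conceptual heart, the fixed-point-free action, is an immediate consequence of Lemma~\ref{two}.
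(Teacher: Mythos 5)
Your proposal is correct and follows essentially the same route as the paper's proof: the Frattini argument gives $G=N\,\norm G P$, the subgroup $Q_0$ normalizes $P_0$, Lemma~\ref{two} yields the fixed-point-free action of $Q_0/(Q_0\cap N)$ on $P_0$, and the Frobenius-complement structure of $q$-groups gives the cyclic or generalized quaternion conclusion. The only cosmetic difference is that you identify $Q_0/(Q_0\cap N)$ with a Sylow $q$-subgroup of $G/N$ directly through $G/N\cong\norm G P/\norm N P$, whereas the paper passes through a Sylow $q$-subgroup $Q$ of $G$ containing $Q_0$; the two verifications amount to the same order count.
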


\begin{proof}  
By the Frattini argument we know that $G = N \norm G P$.  Note that $P_0$ is normalized by $Q_0$, and $Q_0/(Q_0\cap N)$ acts fixed-point freely on $P_0$ by Lemma \ref{two}: it follows that $P_0 Q_0/(Q_0\cap N)$ is a Frobenius group with kernel $P_0(Q_0\cap N)/(Q_0\cap N)$ and $Q_0/(Q_0 \cap N)$ as a complement. In particular, $Q_0/(Q_0\cap N)$ is cyclic or a generalized quaternion group. 

Finally, let $Q$ be a Sylow $q$-subgroup of $G$ containing $Q_0$. Since $G = N \norm G P$, it is not difficult to check that we have $QN/N \cong Q/ (Q \cap N)  \cong Q_0 /(Q_0 \cap N)$, and the proof is complete.
\end{proof}

Note that the conclusion about the Sylow $q$-subgroups of $G/N$ in the above proposition is no longer valid in general if the assumption $P_0 \neq 1$ is dropped. Consider for instance the Frobenius group $G$ of order $72$ with complements isomorphic to ${\rm Q}_8$, and let $N = G'$: then $(G,N)$ is an equal order pair (in fact, a Camina pair) by Lemma~\ref{generalize j}, we have $p=3$, $q=2$, $Q_0 \cap N \cong C_2$ acts fixed-point freely on $P$, and $G/N$ is isomorphic to $C_2 \times C_2$. 

\smallskip
We are now in a position to prove Theorem~B.

\begin{ThmB}
Let $(G,N)$ be an equal order pair such that $N$ is nilpotent.
\begin{enumeratei}
\item Assume that $N$ is a $p$-group for a prime $p$. If $H$ is a nontrivial $p'$-subgroup of $G$, then $NH$ is a Frobenius group with kernel $N$ and $H$ as a Frobenius complement. In particular, for every prime $q\neq p$, the Sylow $q$-subgroups of $G$ are cyclic or generalized quaternion groups.
\item Assume that $N$ is not a group of prime-power order. Then we have $N=\fit G$. Moreover, if $q$ is a prime divisor of $|G/N|$ and $Q$ is a Sylow $q$-subgroup of $G$, then  $NQ/(N\cap Q)$ is a Frobenius group with kernel $N/(N\cap Q)$ and $Q/(N\cap Q)$ as a Frobenius complement. In particular, the Sylow subgroups of $G/N$ are all cyclic or generalized quaternion groups.
\end{enumeratei}
\end{ThmB}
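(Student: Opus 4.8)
The plan is to prove both parts by exhibiting the relevant Frobenius actions, using Lemma~\ref{two} to control element orders and centralizers, and then invoking the classical fact that a Frobenius complement has cyclic or generalized quaternion Sylow subgroups.

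For part~(a), since $N$ is a $p$-group, every nontrivial element $h$ of the $p'$-subgroup $H$ is a nontrivial $p'$-element, hence lies in $G\setminus N$, and ${\rm o}(hN)={\rm o}(h)$ is a $p'$-number (the only $p'$-element of the $p$-group $N$ being the identity). I would apply Lemma~\ref{two} with $x=h$ and $m={\rm o}(hN)$: any prime-order element of $\cent N h$ would have order dividing $m$, but $\cent N h\subseteq N$ consists of $p$-elements while $p\nmid m$, so $\cent N h=1$. As this holds for every nontrivial $h\in H$, the subgroup $H$ acts fixed-point-freely on $N$, so $NH$ is a Frobenius group with kernel $N$ and complement $H$. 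The final assertion follows by taking $H$ to be a Sylow $q$-subgroup $Q$ (a $p'$-group) for each prime $q\neq p$: being a Frobenius complement and a $q$-group, $Q$ is cyclic or generalized quaternion.

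For part~(b), I would first establish $N=\fit G$. Since $N$ is nilpotent and normal, $N\subseteq\fit G$; were this containment proper, the ``moreover'' clause of Theorem~A would force $\fit G$, and hence its subgroup $N$, to have prime-power order, contrary to hypothesis, so $N=\fit G$. For the Frobenius structure, fix a prime divisor $q$ of $|G/N|$ and a Sylow $q$-subgroup $Q$ of $G$. Because $N$ is nilpotent, each Sylow $p$-subgroup $P$ of $N$ (with $p\neq q$) is characteristic in $N$, hence normal in $G$, so that $\norm G P=G$ and a Sylow $q$-subgroup $Q_0$ of $\norm G P$ may be taken equal to $Q$; moreover $Q_0\cap N=Q\cap N=\oh q N$, the unique Sylow $q$-subgroup of $N$ (a normal $q$-subgroup of $G$ lies in every Sylow $q$-subgroup). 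Since in the direct decomposition $N=\oh q N\times M$ (with $M$ the Hall $q'$-subgroup of $N$) the factor $\oh q N$ centralizes $P\subseteq M$, we get $P_0=\cent P{Q_0\cap N}=P\neq1$, so Proposition~\ref{nilpotent N} applies and yields that $Q/(Q\cap N)$ acts fixed-point-freely on $P$.

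Finally I would assemble these per-prime actions. As $M$ is the product of the Sylow subgroups $P$ above, each normal in $G$, and $Q/(Q\cap N)$ acts fixed-point-freely on each factor $P$, it acts fixed-point-freely on $M$; since $N/(N\cap Q)=N/\oh q N$ is $Q$-equivariantly isomorphic to $M$, the action on $N/(N\cap Q)$ is fixed-point-free as well. Therefore $NQ/(N\cap Q)$ is a Frobenius group with kernel $N/(N\cap Q)$ and complement $Q/(N\cap Q)$; the latter, being a $q$-group and a Frobenius complement, is cyclic or generalized quaternion, and it is isomorphic to the Sylow $q$-subgroup $QN/N$ of $G/N$. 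The step requiring the most care is the verification that the hypothesis $P_0\neq1$ of Proposition~\ref{nilpotent N} genuinely holds here---this is precisely where the nilpotency of $N$ is used, and the remark following that proposition shows the conclusion can fail without it---together with the passage from the separate fixed-point-free actions on the Sylow subgroups of $N$ to a single such action on $N/(N\cap Q)$.
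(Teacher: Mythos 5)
Your proposal is correct and takes essentially the same route as the paper: in (a) you inline the argument of Remark~\ref{remFrobenius} (which the paper invokes, via Lemma~\ref{subgroups}, for the pair $(NH,N)$), deducing $\cent N h=1$ for all nontrivial $h\in H$ from Lemma~\ref{two}; and in (b) you obtain $N=\fit G$ from the ``moreover'' clause of Theorem~A and then apply Proposition~\ref{nilpotent N} with $P$ normal in $G$ (so $Q_0=Q$) and $P_0=\cent P{Q\cap N}=P$ by nilpotency of $N$, exactly as the paper does. Your final assembly via the $Q$-equivariant isomorphism $N/(N\cap Q)\cong M$ is just a rephrasing of the paper's product decomposition of $NQ/(N\cap Q)$, so there is no substantive difference.
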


\begin{proof} Conclusion (a) is an application of the argument in Remark~\ref{remFrobenius}, taking into account that $(NH,N)$ is an equal order pair by Lemma~\ref{subgroups}. 

As regards conclusion (b), our assumption that $N$ is nilpotent but not of prime-power order yields $N = \fit G$ in view of Theorem~A.  Moreover, if $p$ is any prime divisor of $|N|$ different from $q$ (which certainly exists) and $P$ is the Sylow $p$-subgroup of $N$, then $P$ is normal in $G$, we have $\cent P{Q\cap N}=P\neq 1$ and Proposition~\ref{nilpotent N} ensures that the Sylow $q$-subgroups of $G/N$ are cyclic or generalized quaternion groups. The same proposition also yields that $PQ/(Q\cap N)$ is a Frobenius group with kernel $P(Q\cap N)/(Q\cap N)$ and $Q/(Q\cap N)$ as a Frobenius complement. Since 
$$\frac {NQ}{N \cap Q} = \left( \prod_{p \mid |N|,\;p\neq q} \frac {P(Q\cap N)}{Q\cap N} \right) \cdot \frac {Q}{Q\cap N},$$
the remaining part of our claim is easily proved.
\end{proof}

Let $G$ be a group, $p$ a prime number and $\F$ a field. According to \cite[Definition 1.1]{FLT}, $G$ is called {\it $p'$-semiregular over $\F$} if there exists a finite-dimensional $\F G$-module $V$ such that the centralizer in $V$ of every nontrivial $p'$-element of $G$ is trivial (note that if, in this situation, $p$ does not divide the order of $G$, then $V\rtimes G$ is a Frobenius group with $G$ as a Frobenius complement).  As mentioned in the Introduction, this concept is relevant in the context of equal order pairs $(G,N)$ where $N$ is nilpotent: this connection is the subject of the following result.

\begin{proposition}\label{p'semiregular} 
Let $(G,N)$ be an equal order pair such that $N$ is nilpotent, and let $\overline{G} = G/N$. Then both $\overline{G}$ and $\overline{G}/\oh{p}{\overline{G}}$ are $p'$-semiregular over ${\rm GF}(p)$ for every prime divisor $p$ of $|N|$.
\end{proposition}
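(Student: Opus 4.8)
The plan is to exhibit, for each prime $p$ dividing $|N|$, an explicit $\GF p[\overline G]$-module witnessing the $p'$-semiregularity, and then to cut it down to obtain the module for $\overline G/\oh p{\overline G}$. The natural candidate is $V=P/\frat P$, where $P=\oh p N$ is the Sylow $p$-subgroup of the nilpotent group $N$ and $\frat P$ is its Frattini subgroup. Since $N$ is nilpotent, $P$ is characteristic in $N$, so $P$ and $\frat P$ are normal in $G$, and $V$ is a nonzero $\GF p$-vector space on which $G$ acts by conjugation. The first key step is to observe that this $G$-action factors through $\overline G=G/N$; this is precisely where the nilpotency of $N$ is used. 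Writing $N=P\times\oh{p'}N$, the factor $\oh{p'}N$ centralizes $P$, while $P$ itself acts trivially on $P/\frat P$ because $P'\subseteq\frat P$ forces inner automorphisms of $P$ to be trivial modulo $\frat P$. Hence $N$ acts trivially on $V$, so $V$ is genuinely a $\GF p[\overline G]$-module.

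The second step is to verify the semiregularity condition on $V$. Let $\bar x=xN$ be a nontrivial $p'$-element of $\overline G$, so that $x\in G\setminus N$ and $m={\rm o}(xN)$ is coprime to $p$. By Lemma~\ref{two}, the order of $x$ is divisible only by primes dividing $m$, so $x$ is itself a $p'$-element of $G$; by the same lemma $\cent N x$ contains no element of order $p$, and since any nontrivial subgroup of $P$ would supply such an element, we get $\cent P x=1$. As $\gen x$ acts coprimely on the $p$-group $P$, the equality $\cent P x=1$ passes to the Frattini quotient, yielding $\cent V x=0$; because $N$ acts trivially on $V$, this is exactly the statement that $\bar x$ has trivial centralizer in $V$. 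This proves that $\overline G$ is $p'$-semiregular over $\GF p$.

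For the quotient $\overline G/\oh p{\overline G}$, I would pass to the submodule $W=\cent V{\oh p{\overline G}}$. Since $\oh p{\overline G}$ is a $p$-group acting on the nonzero $\GF p$-space $V$, its fixed space $W$ is nonzero; it is $\overline G$-invariant because $\oh p{\overline G}$ is normal in $\overline G$, and $\oh p{\overline G}$ acts trivially on it by construction, so $W$ is a $\GF p[\overline G/\oh p{\overline G}]$-module. Finally, every nontrivial $p'$-element of $\overline G/\oh p{\overline G}$ lifts, through its $p'$-part, to a nontrivial $p'$-element $\bar x$ of $\overline G$; by the previous step $\cent V{\bar x}=0$, hence $\cent W{\bar x}=0$, and this descends to the quotient since $\oh p{\overline G}$ acts trivially on $W$. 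Thus $\overline G/\oh p{\overline G}$ is $p'$-semiregular over $\GF p$ as well.

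The crucial point to get right, which I expect to be the main obstacle, is the first step: that $N$ acts trivially on $P/\frat P$, turning $V$ into an $\overline G$-module rather than merely a $G$-module. This is the only place where the nilpotency hypothesis on $N$ is genuinely used, and without it the construction collapses. The remaining ingredients---the coprime-action transfer of the fixed-point condition to the Frattini quotient, the nonvanishing of $\cent V{\oh p{\overline G}}$ for a $p$-group acting on a $\GF p$-space, and the lifting of $p'$-elements through a normal $p$-subgroup---are routine.
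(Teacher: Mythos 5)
Your proof is correct, and it genuinely differs from the paper's at two points, even though both build on the same module $V=P/\frat P$. (You are in fact more careful than the paper at the outset: the paper merely asserts that $V$ ``can be viewed as'' a ${\rm GF}(p)[G/N]$-module, whereas you verify via $N=P\times\oh{p'}N$ and $[P,P]\subseteq\frat P$ that $N$ acts trivially, which is indeed where nilpotency enters.) For the semiregularity of $\overline{G}$, the paper argues by contradiction: it writes a representative $x$ as $x_{p'}x_p$ with $x_p\in N$, and uses Lemma~\ref{two} to show that $\gen{x_{p'}}$ permutes the elements of a fixed coset $v\frat P$ in orbits of common size $s=|\gen{x_{p'}}/(\gen{x_{p'}}\cap N)|$, a nontrivial $p'$-number, contradicting the fact that $|v\frat P|$ is a power of $p$. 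You instead apply Lemma~\ref{two} upfront to conclude that $x$ itself is a $p'$-element with $\cent Px=1$, and then invoke the standard coprime-action fixed-point lemma $\cent{P/\frat P}{x}=\cent Px\frat P/\frat P$; this is legitimate (coprimality is clear and $P$ is a $p$-group, so no solvability caveat arises), and it replaces the paper's orbit count, which is essentially a bare-hands proof of that same lemma in this special case. For the quotient $\overline{G}/\oh p{\overline{G}}$, the paper simply cites Corollary~1.4 of \cite{FLT}, while you give a self-contained argument via the nonzero fixed-point submodule $W=\cent V{\oh p{\overline{G}}}$ together with the lifting of $p'$-elements through the normal $p$-subgroup; this is exactly the expected content of that corollary and is sound. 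The net trade-off: your route is more modular and self-contained on the quotient step, at the price of quoting the coprime fixed-point lemma as a known tool, whereas the paper proves that transfer by hand but outsources the quotient step to \cite{FLT}.
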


\begin{proof} 
Let $p$ be a prime divisor of $|N|$ and $P$ the Sylow $p$-subgroup of $N$; observe that, denoting by $\frat P$ the Frattini subgroup of $P$, the factor group $V=P/\frat P$ can be viewed as a ${\rm GF}(p)[G/N]$-module. We claim that, for every nontrivial $p'$-element $xN$ of $G/N$, the centralizer in $V$ of $xN$ is trivial. 

For a proof by contradiction, assume there exists a nontrivial element $v\frat P$ of $V$ that is centralized by $xN$; writing $x=x_{p'}\cdot x_p$ as the product of two commuting elements of $p'$-order and $p$-power order respectively, it is clear that $x_p$ lies in $N$ and so it centralizes $v\frat P$. Thus, the $p'$-element $x_{p'}\in G\setminus N$ centralizes $v\frat P$ as well. Since, by Lemma~\ref{two}, we have $\cent{\langle x_{p'}\rangle} u=\langle x_{p'}\rangle\cap N$ for every element $u\in v\frat P$, we see that $\frat P\neq 1$ and $\langle x_{p'}\rangle$ permutes the elements of $v\frat P$ inducing orbits of equal size $s=|\langle x_{p'}\rangle/(\langle x_{p'}\rangle\cap N)|$. But $s\neq 1$ is a $p'$-number and $|v\frat P|$ is a $p$-power, so this is clearly a contradiction and our claim is proved. This shows that $\overline{G}$ is $p'$-semiregular over ${\rm GF}(p)$, and now the same conclusion holds for $\overline{G}/\oh{p}{\overline{G}}$ in view of Corollary~1.4 of \cite{FLT}.
\end{proof}

The part of Theorem~B regarding the Sylow subgroups of $G/N$ is in fact a consequence of the previous observation (see \cite[Lemma~1.5]{FLT}). As another consequence, we can bound the Fitting height of a solvable group $G$ having a nilpotent subgroup $N$ such that $(G,N)$ is an equal order pair.

\begin{proposition}\label{fittingheight} Let $(G,N)$ be an equal order pair such that $G$ is solvable and $N$ is nilpotent. Then the Fitting height of $G$ is at most $4$.
\end{proposition}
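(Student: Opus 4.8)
The plan is to reduce the statement to a bound on the Fitting height of $G/N$. Since $N$ is nilpotent and normal it is contained in $\fit G$; because the Fitting height does not increase under passing to quotients and $G/\fit G$ is a quotient of $G/N$, this gives $\h G \le 1 + \h{G/N}$ (I write $\h{\cdot}$ for the Fitting height). Thus it suffices to prove $\h{G/N} \le 3$, and for this I would exploit the strong restrictions on the Sylow structure of $G/N$ supplied by Theorem~B, treating separately the two cases in its statement.

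First I would handle the case in which $N$ is not of prime-power order. Here Theorem~B(b) guarantees that every Sylow subgroup of $H := G/N$ is cyclic or generalized quaternion. Setting $F = \fit H$, nilpotency makes $F$ the direct product of its Sylow subgroups, $F = \prod_\ell P_\ell$, each $P_\ell$ cyclic or generalized quaternion. Since $H$ is solvable we have $\cent H F = \zent F$, so $H/F$ embeds into $\out F = \prod_\ell \out{P_\ell}$. Now $\out{P_\ell}$ is nilpotent for every $\ell$, with the sole exception of $P_2 \cong {\rm Q}_8$, where $\out{{\rm Q}_8} \cong {\rm S}_3$: indeed the automorphism group of a cyclic $\ell$-group, and of a generalized quaternion group of order at least $16$, is nilpotent (a $2$-group in the quaternion case), whereas $\aut{{\rm Q}_8} \cong {\rm S}_4$. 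Hence $\out F$ is the direct product of a nilpotent group with a copy of ${\rm S}_3$ (or with the trivial group), so $\h{\out F} \le 2$; by monotonicity of the Fitting height this forces $\h{H/F} \le 2$, and therefore $\h H \le 1 + \h{H/F} \le 3$, as required.

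It remains to treat the case in which $N$ is a $p$-group. By Proposition~\ref{p'semiregular} the quotient $G/N$ is $p'$-semiregular over ${\rm GF}(p)$, so its Sylow $q$-subgroups are cyclic or generalized quaternion for every prime $q \ne p$; that is, $G/N$ enjoys the ``cyclic-or-quaternion Sylow'' property with the possible exception of the single prime $p$. I would then invoke the structural analysis of such groups carried out in Section~2 of \cite{FLP} to conclude that $\h{G/N} \le 3$, which combined with the reduction above yields $\h G \le 4$ and completes the proof.

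The main obstacle is exactly this last case. When $N$ is a $p$-group the Sylow $p$-subgroup of $G/N$ is left entirely uncontrolled by the equal-order hypothesis, so the clean computation with $\out{\fit{G/N}}$ used above collapses: the factor $\out{P_p}$ can have arbitrarily large Fitting height. Surmounting this requires the deeper consequences of $p'$-semiregularity developed in \cite{FLP}, which pin down the interaction between the Sylow $p$-subgroup and the remaining (cyclic or quaternion) Sylow subgroups tightly enough to keep $\h{G/N}$ bounded by $3$.
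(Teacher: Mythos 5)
Your reduction $\h{G}\le 1+\h{G/N}$ and your first case are sound. When $N$ is not of prime-power order, the computation with $H/F\hookrightarrow\out{F}$ (where $F=\fit{H}$ is a direct product of cyclic and generalized quaternion groups, so that $\out{F}$ is nilpotent-by-${\rm S}_3$) is a correct variant of what the paper does: the paper instead lets $H/\fit{H}$ act faithfully on $\fit{H}/\frat{H}$ (Gasch\"utz/Huppert III.4.2) and notes that the automorphism group of that group is abelian or abelian $\times\,{\rm S}_3$. The two devices are interchangeable here, and your direct appeal to Theorem~B(b) is if anything slightly shorter than the paper's embedding of $G/N$ into $H_{p_1}\times H_{p_2}$.

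The case where $N$ is a $p$-group, which you explicitly leave open, is a genuine gap, and moreover your stated target is the wrong one. Citing ``the structural analysis in Section~2 of \cite{FLP}'' is a deferral, not an argument: you extract no statement yielding $\h{G/N}\le 3$, and in fact the paper never proves that bound in this case --- when $N$ is a $p$-group, $\oh p{G/N}$ may be a nontrivial $p$-group of uncontrolled structure, so semiregularity only constrains the quotient $(G/N)/\oh p{G/N}$, leaving a priori $\h{G/N}\le 4$, which through your reduction gives $5$, not $4$. The paper closes the case with two moves absent from your proposal. First, its key claim is formulated for the \emph{cores} rather than the Sylow subgroups: if $H$ is solvable and $\oh q{H}$ is cyclic or generalized quaternion for every prime $q$, then $\h{H}\le 3$, because $H/\fit{H}$ acts faithfully on $\fit{H}/\frat{H}$ and so embeds in a group that is abelian or abelian $\times\,{\rm S}_3$. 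This applies to $H_p=(G/N)/\oh p{G/N}$, since $\oh p{H_p}=1$ and the remaining cores lie in cyclic or quaternion Sylow subgroups: the ``entirely uncontrolled'' Sylow $p$-subgroup you worried about simply does not appear in $\fit{H_p}$, so the same small automorphism-group computation goes through unchanged. Second, instead of bounding $\h{G/N}$, one enlarges $N$: the preimage $O$ of $\oh p{G/N}$ in $G$ is again a $p$-group (a $p$-group extended by a $p$-group), hence nilpotent and normal, so $\h{G}\le 1+\h{G/O}=1+\h{H_p}\le 4$. With this core-versus-Sylow reformulation and the replacement of $N$ by $O$, your argument completes and is self-contained (needing only \cite[Lemma~1.5]{FLT}); without them it establishes the proposition only when $|N|$ has at least two prime divisors.
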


\begin{proof}
We start by proving the following claim: if $H$ is a solvable group such that $\oh p H$ is cyclic or a generalized quaternion group for every prime divisor $p$ of $|H|$, then $H/\fit H$ has an abelian normal subgroup of index at most $2$ (hence, $H$ has Fitting height at most $3$). In fact, by \cite[III, 4.2]{huppert}, $H/\fit H$ acts faithfully on $\fit H/\frat H$ and therefore it can be identified with a subgroup of ${\rm{Aut}}(\fit H/\frat H)$; note that the automorphism group of $\oh p {\fit H/\frat H}$ is an abelian group for every odd prime, whereas it is either abelian or isomorphic to ${\rm S}_3$ for $p=2$. Thus ${\rm{Aut}}(\fit H/\frat H)$, which is the direct product of these automorphism groups for $p$ running over the prime divisors of $|H|$, is either abelian or the direct product of an abelian group with ${\rm S}_3$. Our claim immediately follows. 

Now, let $p$ be a prime divisor of $|N|$; then, by Proposition~\ref{p'semiregular} together with Lemma~1.5 of \cite{FLT}, the factor group of $G/N$ over $\oh p{G/N}$ (call it $H_p$) has cyclic or generalized quaternion Sylow subgroups for all the prime divisors of its order except $p$, hence $\fit{H_p}$ has the same property (but its order is clearly coprime to $p$). In other words, $H_p$ is a group as $H$ in the paragraph above, and we conclude that $H_p$ has Fitting height at most $3$. If $|N|$ is divisible by two distinct primes $p_1$ and $p_2$, then $G/N$ embeds in $H_{p_1}\times H_{p_2}$ and hence $G/N$ has Fitting height $3$; the desired conclusion that $G$ has Fitting height at most $4$ follows at once. On the other hand, if $N$ is a $p$-group then, setting $O/N=\oh p{G/N}$, we have that $O$ is a $p$-group; since $G/O=H_p$ has Fitting height $3$, the desired conclusion follows in this case as well and the proof is complete. 
\end{proof}

We note that the bound in Proposition \ref{fittingheight} is best possible since the Frobenius group whose Frobenius kernel is an elementary abelian group of order $49$ and a Frobenius complement is the non-split extension of ${\rm SL}_2 (3)$ by a cyclic group of order $2$ (this group is isoclinic to ${\rm GL}_2 (3)$, but is a Frobenius complement) has Fitting height $4$.  This raises the question of whether there is an equal order pair satisfying Proposition \ref{fittingheight} with Fitting height $4$ that is not a Frobenius group.

We also recall that, for a group $G$, the condition of being $p'$-semiregular over some field implies that every $p'$-subgroup of $G$ having order a product of two primes is cyclic (see \cite[Corollary~1.10]{FLT}); it turns out that the converse is also true if $G$ is solvable (\cite[Theorem~3.2]{FLT}). We refer the reader to Section~2 of \cite{FLT} for a thorough description of solvable $p'$-semiregular groups (thus, also of the solvable factor groups $G/N$ where $(G,N)$ is an equal order pair with $N$ nilpotent).

Finally, if $V$ is an $\F G$-module such that the centralizer in $V$ of every nontrivial $p'$-element of $G$ is trivial, then $(V \rtimes G, V)$ need not be an equal order pair; as an example we can consider $V$ to be the natural module of order $q^2$ for ${\rm{SL}}_2(q)$, where $q$ is a power of $2$ (see Remark~\ref{natural}).

%There is a similar result for Camina pairs, that says that if $(G,N)$ is a Camina pair, then either $G$ is a Frobenius group with Frobenius kernel $N$, $N$ is a $p$-group for some prime $p$, or $G/N$ is a $p$-group and $G$ has a normal $p$-complement.  (This was first proved by the first author as Theorem 2 of \cite{camina}.)  Note for case (c) we cannot insist that $G/N$ is a $p$-group as would be the case for Camina pairs. See Example \ref{examples} (h).  In fact, we do not know of any Camina pairs where $G/N$ is a $p$-group where it is not the case that either $G$ is a Frobenius group or $G$ is a $p$-group.    On the other hand, there are many examples of Camina pairs where $N$ is a $p$-group and $G$ is neither a Frobenius group nor a $p$-group.  In fact, there are even examples where $G/N$ is not solvable.  

%The obvious questions that arise are what can be said about $G/N$ when $(G,N)$ is an equal order pair such that $N$ is a $p$-group.   We have a number of examples with $(G,N)$ a Camina pair.

\medskip

We noted in the Introduction that non-Frobenius Camina pairs $(G,N)$ with a nonsolvable factor group $G/N$ do exist.  The first example to be be found is described in \cite{nonsolv}.  Other examples are presented in Theorem 1.20 of \cite{GGLMNT}.  In this case, it is known that $N$ is necessarily a group of prime-power order.

Our aim in the rest of this section is to discuss equal order pairs $(G,N)$ such that {\it {$G/N$ is nonsolvable and $N$ is not a group of prime-power order}}. We will describe these equal order pairs in Theorem~C, and we will see in Remark~\ref{mixed} that there are examples of this kind. 

For an equal order pair $(G,N)$, it will follow from Theorem~D that the nonsolvability of $G/N$ implies the nilpotency of $N$; now, by Proposition~\ref{p'semiregular}, $G/N$ is a $p'$-semiregular group for every prime divisor $p$ of $|N|$ (in particular, every Sylow subgroup of $G/N$ is either cyclic or a generalized quaternion group) and the results of \cite{FLT} come into play. 

Recall that the set ${\mathcal{R}}$ is defined as the set of all primes $r$ satisfying both the following conditions.
\begin{enumeratei}
\item $r=2^a\cdot 3^b+1$, where $a\geq 2$ and $b\geq 0$.
\item $(r+1)/2$ is a prime.
\end{enumeratei}

Recall also that, by a classical theorem of Zassenhaus, a nonsolvable Frobenius complement has a subgroup of index at most $2$ which is isomorphic to ${\rm SL}_2 (5)\times M$, where $M$ is a metacyclic $\{ 2,3,5 \}'$-group.

\begin{ThmC}
Let $(G,N)$ be an equal order pair such that $G/N$ is nonsolvable and $N$ is not a group of prime-power order. Denote by $R/N$ and $K/N$ the solvable radical and the solvable residual of $G/N$, respectively. Then there exists a prime $p$ in $\mathcal{R}\cup\{7,17\}$ such that the following conclusions hold.
\begin{enumeratei}
\item $G/R$ is isomorphic either to ${\rm PSL}_2(p)$ or to ${\rm PGL}_2(p)$.
\item $K/N$ is isomorphic to ${\rm SL}_2(p)$.
\item If there exists a prime $r\in\pi(N)$ such that $K/N$ has a normal Sylow $r$-subgroup, then $p$ is necessarily $5$. Otherwise we have $\pi(N)=\{3,p\}$ when $p\neq 5$, whereas $\pi(N)\subseteq\{2,3,5\}$ when $p=5$.
\end{enumeratei}
\end{ThmC}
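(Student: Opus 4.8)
The plan is to reduce the statement to the classification of nonsolvable $p'$-semiregular groups carried out in \cite[Section~4]{FLT}, and then to read off the arithmetic restrictions on $\pi(N)$ from the fact that $\overline G:=G/N$ must be $p'$-semiregular \emph{simultaneously} for every prime $p\in\pi(N)$. First I would make the relevant hypotheses available. Since $G/N$ is nonsolvable, Theorem~D forces $N$ to be nilpotent: were $N$ not nilpotent, part~(a) of Theorem~D would make $G/N$ a $p$-group, hence solvable. As $N$ is also assumed not to be of prime-power order, we have $|\pi(N)|\geq 2$. Now Proposition~\ref{p'semiregular} applies and gives that $\overline G$ is $p'$-semiregular over ${\rm GF}(p)$ for every $p\in\pi(N)$; in particular $\overline G$ is a nonsolvable $p'$-semiregular group.

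Next I would invoke \cite[Theorem~4.1]{FLT} for the group $\overline G$. Its conclusion produces a prime $p\in\mathcal R\cup\{7,17\}$ together with a structural description of $\overline G$ from which conclusions (a) and (b) follow directly: the solvable residual $K/N$ is isomorphic to ${\rm SL}_2(p)$, giving (b), while the quotient $\overline G/(R/N)=G/R$ by the solvable radical is almost simple with socle ${\rm PSL}_2(p)$ (note that $\zent{{\rm SL}_2(p)}$, being characteristic in $K/N$ and solvable, lies in $R/N$, so the image of $K/N$ in $G/R$ is ${\rm PSL}_2(p)$). Since $p$ is prime we have ${\rm Aut}({\rm PSL}_2(p))={\rm PGL}_2(p)$, so $G/R$ is isomorphic either to ${\rm PSL}_2(p)$ or to ${\rm PGL}_2(p)$, which is (a). The prime $p$ is intrinsic to $\overline G$, as it is read off from $K/N$ and $R/N$, so it does not depend on the choice of semiregularity prime in $\pi(N)$.

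It remains to prove (c), and here I would use the semiregularity for \emph{every} $r\in\pi(N)$ at once. Fix $r\in\pi(N)$. Since ${\rm SL}_2(p)$ is quasisimple for every prime $p\geq 5$, it possesses a nontrivial normal Sylow subgroup for no prime; hence the condition ``$K/N$ has a normal Sylow $r$-subgroup'' can only mean that this Sylow subgroup is trivial, i.e.\ that $r\nmid|{\rm SL}_2(p)|$. In that case every nontrivial element of $K/N\cong{\rm SL}_2(p)$ is a nontrivial $r'$-element of $\overline G$, so by $r'$-semiregularity over ${\rm GF}(r)$ the normal subgroup $K/N$ acts fixed-point-freely on the corresponding module $V$; thus $V\rtimes(K/N)$ is a Frobenius group with complement $K/N$. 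By the Zassenhaus description of nonsolvable Frobenius complements recalled above, and using that ${\rm SL}_2(p)$ is perfect (hence has no subgroup of index $2$) and directly indecomposable, this forces ${\rm SL}_2(p)\cong{\rm SL}_2(5)$, i.e.\ $p=5$. This establishes the first alternative of~(c).

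In the opposite case every $r\in\pi(N)$ divides $|{\rm SL}_2(p)|$, and the task is to decide which of these prime divisors can genuinely occur as a semiregularity prime for $\overline G$. This is the heart of the argument and the step I expect to be the main obstacle: it requires the module-theoretic analysis of \cite[Section~4]{FLT}, determining for ${\rm SL}_2(p)$ exactly when an $r'$-semiregular ${\rm GF}(r)$-module exists, both in the defining characteristic $r=p$ and in cross characteristic. The outcome is that for $p\neq 5$ the only admissible primes are $3$ and $p$, so that $\pi(N)\subseteq\{3,p\}$; since $N$ is not of prime-power order this yields $\pi(N)=\{3,p\}$. For $p=5$ one simply has $\pi(N)\subseteq\pi({\rm SL}_2(5))=\{2,3,5\}$. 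Combining the two cases gives the dichotomy in~(c). I expect the delicate point to be the verification that $3$ and $p$, and no other divisor of $|{\rm SL}_2(p)|$, are admissible, since it is precisely there that the defining conditions of $\mathcal R$, namely that $p=2^a3^b+1$ and that $(p+1)/2$ be prime, are forced by the existence of the semiregular module.
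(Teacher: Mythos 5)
There is a genuine gap, and it sits in your second paragraph. You claim that a single invocation of \cite[Theorem~4.1]{FLT} applied to $\overline{G}$ ``produces a prime $p\in\mathcal{R}\cup\{7,17\}$'' from which (a) and (b) follow directly. It does not: that theorem is applied for one fixed prime $r$ at a time, only to a group with trivial $\oh{r}{\cdot}$, and its defining-characteristic conclusion returns ${\rm SL}_2(r^a)$ with $r^a$ an arbitrary prime power and no membership of anything in $\mathcal{R}\cup\{7,17\}$. The identification $K/N\cong{\rm SL}_2(p)$ with $p$ prime in $\mathcal{R}\cup\{7,17\}$ is obtained in the paper only by playing the (at least two) primes of $\pi(N)$ against each other --- precisely the multi-prime analysis that you postpone to part (c) and there explicitly leave unproven as ``the main obstacle.'' So your ordering is circular: the heart you defer is already needed to prove (b). Two further ingredients are missing. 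First, the paper applies \cite[Theorem~4.1]{FLT} to the perfect group $\overline{K}/\oh{r}{\overline{K}}$ (where $\overline{K}=K/N$), and then must separately prove $\oh{r}{\overline{K}}=1$ to conclude about $K/N$ itself; this is done via a chief-factor argument (any chief factor of $G$ inside $\oh{r}{\overline{K}}$ is cyclic since Sylow subgroups of $G/N$ are cyclic or generalized quaternion, hence centralized by the perfect $K$, hence sits in the Schur multiplier of ${\rm SL}_2(5)$, which is trivial). Your proposal never addresses $\oh{r}{\overline{K}}$. Second, conclusion (a) is not a citation either: the paper proves by hand that $G/R$ is almost simple (odd Sylow subgroups of $G/N$ cyclic, Sylow $2$-subgroups of the socle dihedral or generalized quaternion, exclusion of Suzuki socles, and a $C_3\times C_3$ obstruction ruling out two minimal normal subgroups), and only then reads off ${\rm PSL}_2(p)$ or ${\rm PGL}_2(p)$. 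Relatedly, your reading of the hypothesis in (c) --- that a normal Sylow $r$-subgroup of the quasisimple $K/N$ must be trivial --- is fine for the final statement, but in the proof this case must be handled \emph{before} (b) is known, when the normal Sylow $r$-subgroup is the possibly nontrivial $\oh{r}{\overline{K}}$.

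What you do have right: your first paragraph (nilpotency of $N$ via Theorem~D, $|\pi(N)|\geq 2$, simultaneous $r'$-semiregularity of $\overline{G}$ via Proposition~\ref{p'semiregular}) matches the paper exactly, and your treatment of the first alternative of (c) --- $K/N$ consists of $r'$-elements, acts fixed-point-freely on the semiregularity module, hence is a perfect Frobenius complement, and Zassenhaus forces ${\rm SL}_2(5)$ --- is essentially the paper's argument, modulo the ordering issue above. For the second alternative, the mechanism in the paper is not a fresh module-theoretic computation, as you anticipated, but case-matching within \cite[Theorem~4.1]{FLT} across the primes of $\pi(N)$: the Suzuki conclusions are excluded since $K/(K\cap R)$ is simple and not Suzuki; if two primes both fall in the defining-characteristic case, then ${\rm PSL}_2(p^a)\cong{\rm PSL}_2(q^b)$ forces $\{p^a,q^b\}=\{5,2^2\}$, giving $p=5$ and $\pi(N)\subseteq\{2,3,5\}$; otherwise some prime falls in the cross-characteristic conclusion, which forces that prime to be $3$ and $\overline{K}/\oh{3}{\overline{K}}\cong{\rm SL}_2(p)$ with $p\in\mathcal{R}\cup\{7,17\}$, the remaining prime of $\pi(N)$ must then be $p$ itself, whence $\pi(N)=\{3,p\}$ and $\oh{3}{\overline{K}}=\oh{p}{\overline{K}}=1$. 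Until this analysis is supplied, conclusions (a), (b) and the second half of (c) remain unproven in your proposal.
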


\begin{proof} 
Let $U/R$ be a minimal normal subgroup of $G/R$: clearly,  $U/R$ is nonsolvable. Recalling that the Sylow subgroups of $G/N$ are all cyclic or generalized quaternion groups by Proposition~\ref{nilpotent N}, observe first that the Sylow subgroups of $U/R$ for any odd prime are cyclic as well (because they are subgroups of quotients of cyclic groups); it easily follows that $U/R$ is a nonabelian simple group. 

Also, a Sylow $2$-subgroup $D/R$ of $U/R$ is a normal subgroup of a Sylow $2$-subgroup of $G/R$, which is in turn a quotient of a cyclic or generalized quaternion group; as $D/R$ cannot be cyclic, it is necessarily either a dihedral or a generalized quaternion group. In any case, $D/R$ is not a Suzuki $2$-group and hence $U/R$ is not a Suzuki simple group ${\rm Sz}(2^n)$ (note that, as a consequence, $|U/R|$ is divisible by $3$). Since the same argument applies to any other minimal normal subgroup $V/R$ of $G/R$, in $U/R \times V/R$ we could find a subgroup isomorphic to $C_3 \times C_3$, not our case. We conclude that $G/R$ has a unique minimal normal subgroup, hence it is an almost-simple group with socle $U/R$. 

Since $K$ is a perfect group, we have $KR=U$ and hence $K/(K\cap R)\cong U/R$ is a simple group. Set $\overline{K}=K/N$: adopting the bar convention, in view of Proposition~\ref{p'semiregular}, both $\overline{K}$ and $\overline{K}/\oh r{\overline{K}}$ are $r'$-semiregular groups for every prime divisor $r$ of $|N|$. %Moreover, by Corollary~1.4 of \cite{FLT}, the factor group $\overline{K}/\oh r{\overline{K}}$ is an $r'$-semiregular group for every such prime $r$ as well. 

Now, if there exists a prime $r$ in $\pi(N)$ such that $r$ does not divide $|\overline{K}/\oh r{\overline{K}}|$ (i.e., such that $\overline{K}$ has a normal Sylow $r$-subgroup) then, as observed in the paragraph preceding  Proposition~\ref{p'semiregular}, $\overline{K}/\oh r{\overline{K}}$ is a (perfect) Frobenius complement. It follows that $\overline{K}/\oh r{\overline{K}}$ is isomorphic to ${\rm SL}_2(5)$ (and $r \not\in \{ 2,3,5 \}$). Note that, if $\oh r{\overline K}$ is nontrivial, then there exists a normal subgroup $W$ of $G$ contained in $\oh r {\overline K}$ and such that $V = \oh r {\overline K}/W$ is a chief factor of $G$. Since $V$ is a cyclic group (recall that the Sylow subgroups of $G/N$ are all cyclic or generalized quaternion groups), its centralizer $C/W$ in $K/W$ is such that $K/C \leq {\rm {Aut}}(V)$ is abelian; but in fact  $C=K$ because $K$ is perfect. Our conclusion so far is that $V$ is contained in the Schur multiplier of $\overline{K}/\oh r {\overline{K}} \cong {\rm {SL}}_2 (5)$, which is trivial. This contradiction shows that $\oh r {\overline{K}}$ is trivial, hence $K/N \cong {\rm {SL}}_2 (5)$. The three conclusions (a), (b) and (c) all follow in this case.

In view of the previous discussion, we will henceforth assume that $K/N$ does not have a normal Sylow $r$-subgroup for any prime $r$ in $\pi(N)$. As a consequence, for every such prime, we can apply Theorem~4.1 of \cite{FLT} to $\overline{K}/\oh r{\overline{K}}$. Since $K/(K\cap R)$ is simple and not a Suzuki group, we immediately discard conclusions (ii) and (iii) of Theorem~4.1. 

On the other hand, assume that there exist two distinct primes $p,q\in\pi(N)$ for which conclusion (i) of Theorem~4.1 holds, i.e., $\overline{K}/\oh p{\overline{K}}\cong{\rm{SL}}_2(p^a)$ and $\overline{K}/\oh q {\overline{K}} \cong {\rm{SL}}_2 (q^b)$; this implies ${\rm {PSL}}_2 (p^a) \cong K/(K\cap R) \cong {\rm {PSL}}_2 (q^b)$, which occurs if and only if (up to relabelling) $p^a = 5$ and $q^b = 2^2$. In this case, as above, we obtain $K/N \cong {\rm {SL}}_2 (5)$; also, the assumption at the beginning of the previous paragraph yields $\pi (N) \subseteq \{ 2,3,5 \}$ and all the conclusions (a), (b), (c) are proved. Note that, in principle, $\pi(N)$ can be the full set $\{ 2,3,5 \}$ because the prime $3$ can be recovered as in conclusion (iv) of Theorem~4.1 (with $5$ in the role of $r$).

The only possibility which is left is that $\overline{K}/\oh 3{\overline{K}}$ is $3'$-semiregular as in conclusion (iv) of Theorem~4.1, so $\overline{K}/\oh 3{\overline{K}}$ is isomorphic to ${\rm {SL}}_2(p)$ for a prime $p$ in $\mathcal{R}\cup\{7,17\}$. There must be one (and only one) more prime in \(\pi(N)\), which is necessarily $p$, so we also have $\overline{K}/\oh p{\overline{K}}\cong{\rm {SL}}_2(p)$. It immediately follows that $\oh p{\overline{K}}=\oh 3{\overline{K}}=1$, so that $K/N\cong {\rm {SL}}_2(p)$, and the proof is complete.
\end{proof}

\begin{remark}\label{mixed} Note that equal order pairs $(G,N)$ as in Theorem~C, such that $K/N$ does not have a normal Sylow subgroup for any prime in $\pi(N)$, do exist. In fact, ${\rm SL}_2(5)$ has an irreducible module $W_1$ of order $3^4$ on which it acts $3'$-semiregularly; denoting by $W_2$ the natural module of order $5^2$, it can be checked (via \texttt{GAP}, for instance) that the semidirect product $G = (W_1\oplus W_2) \rtimes {\rm SL}_2(5)$ which is formed accordingly, together with its Fitting subgroup $N = W_1 \oplus W_2$, form an equal order pair. 
\end{remark}

We close the section showing an infinite family of equal order pairs $(G,N)$ where $G/N$ is nonsolvable. As already mentioned, it seems much harder to produce examples of Camina pairs with the same property.

\begin{remark}\label{natural}
Let $q\neq 1$ be a power of an odd prime $p$, and let $V$ be the natural module (of dimension $2$ over ${\rm{GF}}(q)$) for ${\rm {SL}}_2 (q)$. Defining $G$ to be the semidirect product formed accordingly, we claim that $(G,V)$ is an equal order pair.  Clearly, it will be enough to check that the coset $xV$ contains elements of equal order for every $x \in {\rm{SL}}_2(q)$.

In fact, since the centralizer in ${\rm{SL}}_2 (q)$ of any nontrivial element of $V$ is a Sylow $p$-subgroup of ${\rm{SL}}_2 (q)$, for every nontrivial $p'$-element $x \in {\rm{SL}}_2 (q)$ we have that $V \langle x \rangle$ is a Frobenius group with kernel $V$, hence the order of every element in the coset $xV$ is the same as the order of $x$. 

Assume next that the element $x \in {\rm{SL}}_2(q)$ is a $p$-element (recall that the Sylow $p$-subgroups of ${\rm{SL}}_2 (q)$ are elementary abelian).  Up to conjugacy in $G$, we can assume that $x$ is a lower-unitriangular matrix in ${\rm{SL}}_2 (q)$; in other words there exists an element $a \in {\rm{GF}} (q)$ such that, for every vector $v = (v_1,v_2) \in V$ (and using the additive notation for $V$), we have $v^x = (v_1 + a v_2, v_2)$. Now, we obtain  
$$(xv)^p = v + v^{x^{-1}} + v^{x^{-2}} + \cdots + v^{x^{-(p-1)}} = \left( pv_1 - \left( \frac{p(p-1)}{2} \right) av_2, \; pv_2 \right) = 0.$$

Finally, recall that any element of ${\rm{SL}}_2 (q)$ whose order is divisible by $p$ is of the form $zx$, for some element $z \in \zent{{\rm{SL}}_2 (q)}$ and $x$ is a (nontrivial) $p$-element.  Assuming we are in this setting, it remains to show that $zxv$ has the same order as $zx$ when $z$ is the unique involution of ${\rm{SL}}_2 (q)$. But $(zxv)^2 = x^2 [x,v] \in x^2 V$ has order $p$ by the paragraph above; noting that the order of $zxv V = zx V$ is $2p$, it immediately follows that the order of $zxv$ is $2p$, as desired.

On the other hand, the above construction does not yield an equal order pair when $q$ is a power of $2$. In fact, a Sylow $2$-subgroup of $G = V\rtimes{\rm{SL}}_2 (q)$ (where $|V| = q^2$) has at least one element of order $4$, whose coset modulo $V$ is represented by an element of order $2$ in ${\rm {SL}}_2 (q)$.

We note that when $q = 3$, we have that $G$ is ${\rm SL}_2 (3)$ acting on its natural module $N$, and this is the only case in this family where $G$ is solvable.  This gives us an example of a solvable equal order pair $(G,N)$ where $N = \fit G$ is a $p$-group for some prime $p$ and $G$ is not a Frobenius group.  This is the only example of an equal order pair with this property that we know of.  The remaining pairs in this family satisfy $N = \fit G =\oh p G$ and $G/N$ is nonsolvable.  We will remind the reader that Chillag, Mann, and Scoppola (\cite{CMS}) prove that any Camina pair where $N$ is a $p$-group will have that $N < \oh p G$. 
\end{remark}

\section{When $N$ is not nilpotent}\label{Sect 6}

We have already seen examples of equal order pairs $(G,N)$ where $N$ is not nilpotent (for instance, the one presented in the paragraph following Lemma~\ref{generalize j}). In this section, we will first show that the assumption of non nilpotency on $N$ forces $G/N$ to be a $p$-group for a prime number $p$ (in fact, it turns out that all elements of $G\setminus N$ are $p$-elements).  After that, we will discuss the situation when $N$ is {\it{nonsolvable}}, which, unlike Camina pairs, can occur.   

\smallskip
The following preliminary result will be relevant for our purposes.

\begin{lemma}
\label{NoTypeb}
Let $(G,N)$ be an equal order pair, and let $q$ be a prime such that $G/N$ has a proper nontrivial normal Sylow $q$-subgroup.  Then $N$ is nilpotent.
\end{lemma}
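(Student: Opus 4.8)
The plan is to argue by contradiction: assume $N$ is not nilpotent. By the remark following Proposition~\ref{pq}, every element of $G\setminus N$, and hence every element of $G/N$, then has prime-power order, and in particular $N$ is not a $p$-group for any prime. Write $\bar Q=QN/N$ for the given proper nontrivial normal Sylow $q$-subgroup, with $Q\in\syl q G$, and fix a prime $r\ne q$ dividing $|G/N|$. Two quick reductions are available. First, if some prime $s$ divides $|G/N|$ but not $|N|$, then a Sylow $s$-subgroup of $G$ meets $N$ trivially and contributes an element of order $s$ in $G\setminus N$; Proposition~\ref{pq}(b) then makes $N$ nilpotent, a contradiction, so $\pi(G/N)\subseteq\pi(N)$. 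Second, $G/N$ always contains an element $x\in G\setminus N$ with ${\rm o}(xN)$ odd (take an order-$q$ element of $\bar Q$ if $q$ is odd, or, if $q=2$, an element mapping to an odd-order element of a $q$-complement, since then $\bar H$ below has odd order), whence Proposition~\ref{pq}(c) shows that $N$ is solvable.

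Next I would pin down the structure of $G/N$. Choosing a complement $\bar H$ to the normal Hall subgroup $\bar Q$ (Schur--Zassenhaus), any element of $\bar H$ of prime order centralizing a nontrivial element of $\bar Q$ would yield an element of $G/N$ of order divisible by two distinct primes, contrary to the prime-power property; hence $\bar H$ acts fixed-point-freely and $G/N=\bar Q\rtimes\bar H$ is a Frobenius group with kernel $\bar Q$ and nontrivial complement $\bar H$. By Proposition~\ref{nilpotent N} the Sylow subgroups of $G/N$ are cyclic or generalized quaternion; moreover $\bar H$, being a Frobenius complement all of whose elements have prime-power order, cannot involve $\mathrm{SL}_2(5)$ and is therefore solvable, so $G/N$ is solvable and Higman's theorem \cite{Higman57} gives $\pi(G/N)=\{q,r\}$ with both primes dividing $|N|$.

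The core of the argument is to show that this configuration is impossible when $N$ is non-nilpotent. The essential local information is supplied by Lemma~\ref{two}: for a $q$-element $g\in Q\setminus N$ the centralizer $\cent N g$ is a $q$-group, so $g$ acts without nontrivial fixed points on the normal $q'$-subgroup $\oh{q'}{N}$, and symmetrically each $r$-element of $G\setminus N$ lying over $\bar H$ acts fixed-point-freely on $\oh{r'}{N}$. Using that $N$ is solvable, so that $\fit N$ is self-centralizing, I would feed these fixed-point-free actions of $q$- and $r$-elements into the nilpotent Fitting subgroup and combine them --- with the normal kernel $\bar Q$ in the role of a fixed-point-free complement --- to realize the appropriate normal Hall sections of $N$ as Frobenius kernels, invoking Thompson's theorem that Frobenius kernels are nilpotent. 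Running this for both primes $q$ and $r$ should force every Sylow subgroup of $N$ to be normal, i.e. $N$ nilpotent, contradicting our assumption. Here Lemma~\ref{seven} is what makes $(QN,Q,Q\cap N)$ a Frobenius--Wielandt triple, and the structure theory of such triples in \cite{lewis} and \cite{SJ} is the natural tool to organize the $q$-local analysis.

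I expect the upgrade in the previous paragraph to be the main obstacle. Although each $q$-element outside $N$ is fixed-point-free on $\oh{q'}{N}$, the $q$-elements lying inside $Q\cap N$ may have nontrivial fixed points --- this already occurs when the Sylow $q$-subgroups are cyclic, where every element of order $q$ is forced into $N$ (as happens for the Frobenius group of order $20$, a permitted equal order pair with $N$ non-nilpotent precisely because there $G/N$ has no proper normal Sylow subgroup). Consequently one cannot simply produce a subgroup acting fixed-point-freely, and a single fixed-point-free automorphism of prime-power (rather than prime) order is not enough to force nilpotency. The point at which this must be overcome is exactly where the hypothesis that $\bar Q$ is \emph{proper} in $G/N$ --- equivalently the presence of the second prime $r$ and the nontrivial complement $\bar H$ --- has to be used decisively, playing the fixed-point-free action of $\bar H$ on $\bar Q$ against the $q$-local structure of $N$.
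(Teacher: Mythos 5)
Your reductions in the first two paragraphs are all correct (prime-power orders outside $N$ via the remark after Proposition~\ref{pq}, $\pi(G/N)\subseteq\pi(N)$ via Proposition~\ref{pq}(b), solvability of $N$ via Proposition~\ref{pq}(c), the Frobenius structure of $G/N$ and Higman's two-prime bound), but the argument has a genuine gap exactly where you flag it: the third paragraph is a plan, not a proof. To invoke Thompson's theorem you would need a \emph{group} acting fixed-point-freely on a Hall section of $N$, and no such group is available: the $q$-elements of $G\setminus N$ do not form a subgroup, the elements of $Q\cap N$ may well have nontrivial fixed points on $\oh{q'}{N}$, and Lemma~\ref{two} only gives fixed-point-freeness for \emph{individual} elements lying outside $N$. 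Even granting a single fixed-point-free automorphism of $q$-power order $q^a$ with $a\geq 2$, that does not force nilpotency of the section (Thompson's theorem needs prime order; higher prime-power order yields much weaker conclusions). So the ``upgrade'' you hoped for does not go through as sketched, and the decisive use of the second prime $r$ and the complement $\overline{H}$ is never actually made.

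The paper's proof takes a completely different and much shorter route, which avoids all the Frobenius-kernel machinery: it argues by induction on $|G|$. Since $QN\nor G$, the Frattini argument gives $G=\norm GQ\,N$, so $\norm GQ\not\subseteq N$ and (when $\norm NQ\neq 1$) the pair $(\norm GQ,\norm NQ)$ is again an equal order pair satisfying the same hypothesis, by Lemma~\ref{subgroups}. Picking a prime $p\neq q$ dividing $|G/N|$, one finds a $p$-element $x\in\norm GQ\setminus N$ with $xN$ of order $p$; if ${\rm o}(x)=p$, Proposition~\ref{pq}(b) immediately yields $N$ nilpotent, and otherwise $1\neq x^p\in\norm NQ$, induction makes $\norm NQ$ nilpotent with a normal Sylow $p$-subgroup $P_0\ni x^p$ normal in $\norm GQ$, whence $[x^p,Q]\subseteq P_0\cap Q=1$; then the nontrivial $p$-element $x^p\in N$ centralizes a $q$-element $y\in Q\setminus N$, contradicting Lemma~\ref{two}. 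The remaining case $Q\nor G$ is settled by Theorem~A, since then $\fit G\not\subseteq N$ forces $N\subseteq\fit G$. Note that this argument never needs $N$ solvable, the Frobenius structure of $G/N$, Higman's theorem, or Thompson's theorem; the inductive normalizer trick is the idea your proposal is missing, and it is precisely what converts the ``local'' centralizer information of Lemma~\ref{two} into the global conclusion.
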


\begin{proof}
We argue by induction on the order of $G$. Let $Q$ be a Sylow $q$-subgroup of $G$: since our assumptions imply that  $QN$ is normal in $G$, by the Frattini argument we have $G=\norm GQN$. In particular $\norm G Q$ is not contained in $N$ and, unless $\norm NQ$ is trivial, Lemma~\ref{subgroups} yields that $(\norm GQ,\norm NQ)$ is an equal order pair. Note that, as we have $\norm G Q/\norm NQ\cong G/N$, this equal order pair satisfies our hypothesis as well.  

Now, since $G/N$ has order divisible by a prime $p\neq q$, we can find a $p$-element $x$ in $\norm GQ\setminus N$ such that the order of $xN$ is $p$. If $x$ has order $p$, then we are done by Proposition~\ref{pq}(b). On the other hand, assume $x$ does not have order $p$, so $x^p$ is a nontrivial $p$-element of $\norm NQ$ (which is therefore nontrivial). If $\norm GQ$ is a proper subgroup of $G$, then by induction $\norm NQ$ is nilpotent; hence $\norm NQ$ has a normal Sylow $p$-subgroup $P_0$ (clearly containing $x^p$), which is also normal in $\norm G Q$. As a consequence, we get $[x^p,Q]  \subseteq P_0\cap Q=1$. Now, recalling that $Q\not\subseteq N$ and taking an element $y\in Q\setminus N$, the nontrivial $p$-element $x^p\in N$ centralizes the $q$-element $y$ not in $N$, against Lemma~\ref{two}.

It remains to consider the case when $Q$ is normal in $G$. But in this situation we have $Q\subseteq \fit G$, hence $\fit G$ cannot be contained in $N$. Then Theorem~A yields that $N$ must be contained in $\fit G$, so $N$ is nilpotent and the proof is complete. 
\end{proof}
	
%We note that $N < \fit G$ occurs when $(G,N)$ is a Camina pair when $G$ is a $p$-group and when $G$ a Gagola pair (reference here).  Are there other equal order pairs where this occurs?	
	
%When looking at our examples, when $G$ is solvable, one will note that there is an upper bound on the Fitting height $G$ when $(G,N)$ is an equal order pair.  Thus, it makes sense to ask:  can we bound the Fitting height of $G$ when $G$ is solvable and $(G,N)$ is an equal order pair?  Can we bound the Fitting height of $N$ when $N$ is solvable and $(G,N)$ is an equal order pair?

%As commented in the paragraph following Proposition~\ref{pq}, if $N$ is not nilpotent then all elements in $G\setminus N$ have prime-power order by part (d) of that proposition and Lemma~\ref{two}. This situation has been studied in \cite{lewis} and in \cite{SJ}, and the results of these papers will turn out to be useful in our context. 

\smallskip
We are now ready to prove Theorem~D, that we state again for the convenience of the reader.  Following the literature, a group $G$ is a {\it {$2$-Frobenius group}} if there are normal subgroups $1 < K < N < G$ so that $N$ is a Frobenius group with Frobenius kernel $K$ and $G/K$ is a Frobenius group with Frobenius kernel $K/N$.  It is not difficult to see that if $G$ is a $2$-Frobenius group, then $G$ is solvable and its second Fitting subgroup ${\bf F}_2 (G)$ is a Frobenius group with kernel $\fit G$, and $G/\fit G$ is a Frobenius group with kernel ${\bf F}_2 (G)/\fit G$,

\begin{ThmD}
Let $(G,N)$ be an equal order pair such that $N$ is not nilpotent. Then the following conclusions hold.
\begin{enumeratei}
\item There exists a prime number $p$ such that every element in $G\setminus N$ is a $p$-element; in particular, $G/N$ is a $p$-group. 
\item If $P$ is a Sylow $p$-subgroup of $G$, then $(G,P,P\cap N)$ is a Frobenius-Wielandt triple.
\item If $N$ is nonsolvable, then $p=2$ and every nonsolvable chief factor $U/V$ of $G$ such that $U\subseteq N$ is isomorphic to ${\rm{PSL}}_2(3^{2^r})$ for some $r>0$.
\end{enumeratei}
\end{ThmD}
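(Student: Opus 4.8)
The plan is to build on two facts available by the time Theorem~D is reached: first, as recorded right after Proposition~\ref{pq}, every element of $G\setminus N$ has prime-power order, so that $G/N$ is a group all of whose elements have prime-power order; and second, Theorem~A, which gives $\fit G\sbs N$ since $N$ is not nilpotent (a nilpotent subgroup of a group properly containing it inside $\fit G$ is impossible). These will be the starting point for all three parts.

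For part (a) I would show $G/N$ is a $p$-group by excluding the possibility that two distinct primes divide $|G/N|$. If $G/N$ is solvable, Higman's theorem (\cite[Theorem~1]{Higman57}) forces $G/N$ to be a Frobenius or a $2$-Frobenius group on exactly two primes. In the Frobenius case the kernel is a proper nontrivial normal Sylow subgroup of $G/N$, so Lemma~\ref{NoTypeb} gives $N$ nilpotent, a contradiction; in the $2$-Frobenius case I pass to the preimage $M$ of the second Fitting subgroup $\fitd{G/N}$, note that $(M,N)$ is an equal order pair by Lemma~\ref{subgroups}, and observe that the Frobenius group $M/N$ has a proper nontrivial normal Sylow subgroup, so again Lemma~\ref{NoTypeb} yields a contradiction. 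The remaining and main obstacle is to exclude nonsolvable $G/N$. Here I would reduce to a nonabelian simple section (replacing $G$ by the preimage of the perfect solvable residual) and use the generalized Fitting subgroup: if some component $L$ of $G$ is not contained in $N$ then, $L$ being quasisimple, the proper normal subgroup $L\cap N$ lies in $\zent L$, whence Lemma~\ref{product} and the centralizer constraints of Lemma~\ref{two} applied to $(L,L\cap N)$ force $|L\cap N|$ to be a power of each prime dividing $|L/(L\cap N)|$ and produce a contradiction; if instead $\lay G\sbs N$, so that $\fitg G\sbs N$ and the nonsolvable group $G/N$ acts faithfully on chief factors lying in $N$, the contradiction comes from the classification of simple groups all of whose elements have prime-power order (Suzuki) together with Lemma~\ref{two}. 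This nonsolvable case is the heaviest point of the argument.

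Part (b) is then immediate: once (a) gives that $G/N$ is a (nontrivial) $p$-group, we have $G=PN$ for a Sylow $p$-subgroup $P$, so the hypotheses of Lemma~\ref{seven} hold ($P\not\sbs N$ and $N$ is not a $p$-group because it is not nilpotent), and that lemma states exactly that $(G,P,P\cap N)=(PN,P,P\cap N)$ is a Frobenius-Wielandt triple.

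For part (c), with $N$ nonsolvable and $G/N$ a nontrivial $p$-group by (a), I would fix a nonsolvable chief factor $U/V$ of $G$ with $U\sbs N$, so $U/V\cong S^k$ for a nonabelian simple group $S$. The key point is that every $x\in G\setminus N$, which is a $p$-element by (a), induces a \emph{nontrivial outer} automorphism on $U/V$: it cannot centralize $U/V$, for the equal-order relation ${\rm o}(xu)={\rm o}(x)$ for $u\in U$ would then force the nonsolvable group $U/V$ to be a $p$-group; and it cannot act as an inner automorphism, for then $x$ would lie in $N$. Reading ${\rm o}(xu)={\rm o}(x)$ modulo $V$ for all $u\in U$ shows that, in the extension of $U/V$ by the automorphism $\sigma$ induced by $x$, the whole coset $\sigma\cdot(U/V)$ consists of $p$-elements. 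The problem thus reduces to classifying the nonabelian simple groups admitting an automorphism of $p$-power order, nontrivial modulo inner, whose coset consists of $p$-elements; after handling the permutation action on the $k$ factors (which reduces matters to a single $S$), the CFSG-based analysis of \cite{lewis} and \cite{SJ} gives $p=2$ and $S\cong\PSL{3^{2^r}}$ for some $r>0$, the base case $r=1$ being the ${\rm M}_{10}$ phenomenon for $\PSL{9}\cong{\rm A}_6$. Pinning down these simple sections is the main technical hurdle, and since at least one such factor occurs it simultaneously forces $p=2$.
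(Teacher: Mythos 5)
Your parts (b) and (c) essentially match the paper: (b) is exactly the paper's argument (Lemma~\ref{seven} applied with $G=PN$), and for (c) the paper likewise quotes Theorem~A of \cite{SJ} wholesale, though it obtains $p=2$ more cheaply from Proposition~\ref{pq}(c) (an odd order for $xN$ forces $N$ solvable, so nonsolvable $N$ gives $p=2$) instead of extracting it from a classification. One slip in your (c): if $x$ induces an \emph{inner} automorphism of $U/V$ it does not follow that ``$x$ would lie in $N$''; it only follows that some element of $u^{-1}xV\sbs G\setminus N$ centralizes $U/V$, which your first observation then excludes --- so that step is wrong as stated but easily repaired. Likewise your two-prime/solvable branch of (a) (Higman plus the ${\bf F}_2(G/N)$ reduction and Lemma~\ref{NoTypeb}) is the same mechanism as the paper's, which invokes \cite[Theorem~1.2]{lewis} instead of Higman (note Burnside makes $G/N$ automatically solvable when only two primes divide its order, so your use of Higman is legitimate).

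The genuine gap is the nonsolvable branch of part (a). Your endgame --- ``if $\lay G\sbs N$ \dots\ the contradiction comes from the classification of simple groups all of whose elements have prime-power order (Suzuki) together with Lemma~\ref{two}'' --- is not a proof, and no contradiction follows at that level of generality: nonsolvable groups all of whose elements have prime-power order do exist ($\PSL{4}$, $\PSL{8}$, $\PSL{9}$, ${\rm PSL}_2(17)$, ${\rm PSL}_3(4)$, ${\rm Sz}(8)$, ${\rm Sz}(32)$, ${\rm M}_{10}$, and extensions of these by elementary abelian $2$-groups), so the mere fact that $G/N$ has this property and acts faithfully on sections of $N$ rules out nothing; for instance your sketch gives no reason why $G/N\cong\PSL{4}$ cannot occur. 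The equal-order hypothesis must be exploited \emph{inside} $N$, and this is precisely what the paper's proof does: when $|G:N|$ is divisible by at least three primes, Theorem~1.3 of \cite{lewis} upgrades ``every element of $G\setminus N$ has prime-power order'' to ``every element of $G$ has prime-power order,'' so Brandl's classification (\cite[Theorem~2.2]{lewis}) applies to the \emph{whole group} $G$ rather than to $G/N$; the surviving configurations are then eliminated concretely ($G\cong{\rm M}_{10}$ dies because its single nontrivial coset modulo ${\rm A}_6$ contains elements of orders $4$ and $8$, and the case of a normal elementary abelian $2$-subgroup $M$ with $G/M$ simple dies via Theorem~A, since $\fit G\subsetneq N$ forces $M\sbs N$, incompatible with $N$ being proper and $G/M$ simple). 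Your component dichotomy also leaves the subcase $L\not\sbs N$, $L\cap N=1$ resting on the unproved assertion $L\sbs\cent GN$ (needed to apply Lemma~\ref{product}); that is fixable, but the $\lay G\sbs N$ branch is a missing idea, not a routine verification, and as written the proof of (a) --- on which (b) and (c) depend --- is incomplete.
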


\begin{proof} 
If we prove that $G/N$ is a $p$-group for a prime number $p$, then the rest of conclusion (a) follows by Lemma~\ref{two}, whereas conclusion (b) is an application of Lemma~\ref{seven} (taking into account that, if $P$ is a Sylow $p$-subgroup of $G$, then $G=PN$ in this situation); therefore, we work to show that $G/N$ has prime-power order. Since we know that every element in $G\setminus N$ has prime-power order, we will be able to take advantage of some results in \cite{lewis}.

In particular, assume that $|G/N|$ is divisible by exactly two distinct primes $p$ and $q$: then Theorem~1.2 of \cite{lewis} yields that $G$ is a $\{p,q\}$-group, and $G/N$ is either a Frobenius group or a {{$2$-Frobenius group}}. %Recalling that a group $X$ is called a $2$-Frobenius group if its second Fitting subgroup ${\bf F}_2(X)$ is a Frobenius group with kernel $\fit X$, and $X/\fit X$ is a Frobenius group with kernel ${\bf F}_2(X)/\fit X$, 
We can consider the subgroup $H$ of $G$ such that $H/N = {\bf F}_2(G/N)$ (note that $H=G$ if $G/N$ is a Frobenius group); now, $(H,N)$ is an equal order pair by Lemma~\ref{subgroups}, and $H/N$ is a Frobenius $\{p,q\}$-group. But then, the Fitting subgroup of $H/N$ is a proper nontrivial normal Sylow subgroup of $H/N$, and Lemma~\ref{NoTypeb} yields the contradiction that $N$ is nilpotent.

Thus, it remains to treat the case when $|G/N|$ is divisible by at least three distinct primes. By \cite[Theorem~1.3]{lewis} this condition implies that all the elements of $G$ have prime-power order, and the structure of $G$ is described in \cite[Theorem~2.2]{lewis} (a theorem by R. Brandl). Since $G$ is clearly not a simple group, we are left with two possibilities: either $G$ is isomorphic to the Mathieu group ${\rm M}_{10}$, or $G$ has a normal elementary abelian $2$-subgroup $M$ such that $G/M$ is isomorphic to one of ${\rm  PSL}_2(4)$, ${\rm PSL}_2(8)$, ${\rm Sz}(8)$ or ${\rm Sz}(32)$. Now, if $G\cong{\rm M}_{10}$ then it has a unique proper, nontrivial, normal subgroup $N$ (which is $G'\cong{\rm A}_6$); but $|G/N|=2$, so all elements of $G\setminus N$ lie in a single coset modulo $N$ and among them there are both elements of order $4$ and elements of order $8$, against our hypothesis. On the other hand, assume that $G$ has a normal elementary abelian $2$-subgroup $M$ such that $G/M$ is a simple group as listed above. In this case, since $N$ is not nilpotent, we know by Theorem~A that $\fit G$ is (properly) contained in $N$, and in particular we have $M\subseteq N$; but now $N$ cannot be proper in $G$, and this final contradiction shows that $G/N$ has prime-power order. As argued before, conclusions (a) and (b) are proved.

Finally, conclusion (c) follows at once by Proposition~\ref{pq} (c) together with Theorem~A of \cite{SJ}.%assume that $|G/N|$ is a power of an odd prime $p$. Then there exists a $p$-element $x\in G\setminus N$, and we know that all the elements in the coset $xN$ have $p$-power order. In particular, $xN$ consists of elements having an odd order, thus $N$ is solvable by Theorem~B(c) of \cite{guralnick}. Since this is not our case, we conclude that $G/N$ is a $2$-group.
\end{proof}

%\medskip
%Next, we show that when $(G,N)$ is an equal order pair, then $Z(G)$ is a $p$-group for some prime $p$.

%\begin{lemma} \label{center}
%If $(G,N)$ is an equal order pair, then $Z(G)$ is a $p$-group for some prime $p$.
%\end{lemma}

%\begin{proof}
%We know by Lemma \ref{N p-grp} that if $N < \fit G$, then $\fit G$ is a $p$-group for some prime $p$ and since $Z (G) < \fit G$, we have $Z(G)$ is a $p$-group.  Thus, we may assume that $Z(G) < \fit G \le N$.  Let $p$ be a prime divisor of $|G:N|$, and so, $N$ does not contain a Sylow $p$-suubgroup $P$ of $G$.  Hence, $P$ contains an element $x$ not in $N$.  Thus, $x$ has order $p^a$.  By Lemma \ref{one}, the elements in $Z(G)$ will have order dividing $p^a$ and so, $Z(G)$ is a $p$-group. 
%\end{proof}
\medskip

We note that the only Camina pairs $(G,N)$ that are known to occur where $N$ is not nilpotent are groups $G$ where $G$ is a Frobenius group whose Frobenius complements are quaternion groups of order $8$ and $N = G'$ is the unique normal subgroup of index $4$ and it contains the Frobenius kernel as a subgroup of index $2$.  It is an open question as to whether there are any other Camina pairs where $N$ is not nilpotent.  In particular, it is not known if such Camina pairs do exist, and also whether there is a bound on the Fitting height of $N$.  We mention that the example displayed after Lemma \ref{generalize j} and examples of this kind provide equal order pairs $(G,N)$ where $G$ is solvable and $N$ is not nilpotent.  We note that the examples we provide only have $N$ with Fitting height $2$, but we also do not have a bound on the Fitting height of $N$.  Hence, it is an open question as to whether or not we can bound the Fitting height of $N$ when $(G,N)$ is an equal order pair when $N$ is solvable, and if we can if that bound would be larger than~$2$.

\medskip
To conclude this paper, we focus on equal order pairs $(G,N)$ where $N$ is nonsolvable. We start by observing that such pairs do exist: an example is given by a group $G$ which is a split extension of ${\rm A}_6$ by a cyclic group of order $8$ (namely, the action of $C_8$ on ${\rm A}_6$ has a kernel of order $4$ which is $\zent G$, and $G/\zent G$ is isomorphic to the Mathieu group ${\rm M}_{10}$). Denoting by $N$ the unique subgroup of index $2$ of $G$, which is isomorphic to ${\rm A}_6\times C_4$, it can be checked that every element of $G$ lying outside $N$ has order $8$. So, $(G,N)$ is an equal order pair with a nonsolvable $N$ and, as we will see in Proposition~\ref{minimalnonsolvable}, this is the smallest possible example of this kind. 

%We also note that, by results in \cite{camina} and \cite{CMS}, there does not exist any Camina pair $(G,N)$ where $N$ is nonsoluble.

%In the next lemma, we consider the case where $G/N$ is a $2$-group.

%\begin{lemma}\label{Oh}
%Suppose $(G,N)$ is an equal order pair and $G/N$ is a $2$-group. Further suppose $M$ is a normal subgroup of $G$ contained in $N$ of
%odd order. Then $(G/M, N/M)$ is an equal order pair.
%\end{lemma}

%\begin{proof} 
%Let $x \in G \setminus N$. Then as the order of $x$ is a 2-power and $M$ has odd order it follows that $o(\bar{x}) = o(x)$. The
%result follows.
%\end{proof}

In what follows, when we say that the group $G$ is minimal with respect to the property of having a nonsolvable subgroup $N$ such that $(G,N)$ is an equal order pair, we mean that $G$ does not have any subgroups or factor groups satisfying the same property. Note that, in view of Theorem~D(c) and Lemma~\ref{subgroups}, in such a situation we have $|G:N| = 2$.

%So, we can assume $O_p(G) =1$ for $p$  odd.

\begin{lemma}\label{4.3} 
Let $G$ be a group which is minimal with respect to the property of having a nonsolvable subgroup $N$ such that $(G,N)$ is an equal order pair. Let $M$ be a normal subgroup of $G$ contained in $N$ and $x\in G \setminus N$. Then either $M$ is solvable, or $G = M \langle x \rangle$ and $N = M \langle x^2 \rangle$.
\end{lemma}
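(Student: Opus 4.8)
The plan is to exploit the minimality of $G$ by producing a subgroup of $G$ that inherits the defining property, and then to read off the conclusion by intersecting with $N$. Recall (as noted just before the statement) that here $|G:N|=2$, so the element $x\in G\setminus N$ satisfies $x^2\in N$, and the cyclic group $\langle x\rangle$ meets $N$ exactly in $\langle x^2\rangle$. There is nothing to prove when $M$ is solvable, so I would assume $M$ is \emph{nonsolvable} and aim to prove $G=M\langle x\rangle$ and $N=M\langle x^2\rangle$.

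First I would form the subgroup $H=M\langle x\rangle$, which is a genuine subgroup since $M\nor G$. Because $M\subseteq N$ while $x\notin N$, we have $H\not\subseteq N$; moreover an element $mx^k$ of $H$ lies in $N$ precisely when $x^k\in N$, i.e.\ when $k$ is even, so that $H\cap N=M\langle x^2\rangle$. As $M\nor G$, the subgroup $M\langle x^2\rangle$ is normal in $H$ (its image in the cyclic quotient $H/M$ is automatically normal), it is nontrivial (it contains $M\neq 1$), and it is proper in $H$ (it is contained in $N$, whereas $x\in H\setminus N$). Lemma~\ref{subgroups} then guarantees that $(H,M\langle x^2\rangle)$ is again an equal order pair.

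The crucial observation is that $M\langle x^2\rangle$ is nonsolvable: it contains the nonsolvable group $M$ as a normal subgroup with cyclic quotient $M\langle x^2\rangle/M$. Hence $H$ is a group possessing a nonsolvable normal subgroup that yields an equal order pair, i.e.\ $H$ satisfies exactly the property with respect to which $G$ is assumed minimal. Since minimality forbids any \emph{proper} subgroup of $G$ from having this property, $H$ cannot be proper, forcing $H=M\langle x\rangle=G$. Intersecting with $N$ then gives $N=G\cap N=H\cap N=M\langle x^2\rangle$, which is precisely the asserted conclusion.

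The argument is short, and I do not expect a genuine obstacle beyond the structural bookkeeping collected in the second paragraph: the points requiring care are that $M\langle x^2\rangle$ is a \emph{proper, nontrivial, normal} subgroup of $H$ and that it is \emph{nonsolvable}, since these are exactly the hypotheses needed to invoke Lemma~\ref{subgroups} and the minimality of $G$ respectively. All the real content of the lemma is carried by the elementary identity $M\langle x\rangle\cap N=M\langle x^2\rangle$ combined with the minimality hypothesis, so once these verifications are in place the conclusion is immediate.
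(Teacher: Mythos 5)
Your proof is correct and follows essentially the same route as the paper's: both arguments hinge on the identity $M\langle x\rangle\cap N=M\langle x^2\rangle$, apply Lemma~\ref{subgroups} to obtain the equal order pair $(M\langle x\rangle,\,M\langle x^2\rangle)$, and then invoke the minimality of $G$ to force $M\langle x\rangle=G$. The only difference is organizational: the paper splits into cases according to whether $M\langle x\rangle$ is proper (deducing solvability of $M\langle x^2\rangle$, hence of $M$, in that case), whereas you argue the contrapositive by assuming $M$ nonsolvable from the start.
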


\begin{proof} 
We already observed that $|G/N|=2$, and Lemma~\ref{two} yields that $x$ is a $2$-element. Taking into account that $M\langle x^2\rangle= N\cap M\langle x\rangle$, Lemma~\ref{subgroups} yields that $(M\langle x\rangle,M\langle x^2\rangle)$ is an equal order pair (unless $M\langle x^2\rangle$ is trivial, but in this case there is nothing to prove). If $M \langle x\rangle$ is a proper subgroup of $G$, then our minimality assumption implies that $M\langle x^2\rangle$ is solvable and so is $M$; otherwise we get $G=M\langle x\rangle$ and $M\langle x^2\rangle=N\cap M\langle x\rangle=N\cap G=N$, as claimed. 
\end{proof}

%We now show that the solvable radical of a ``minimal" $G$ is a nontrivial subgroup that is not minimal normal in $G$.

%\begin{lemma}\label{nonsoluble} Let $G$ be of minimal order among the groups having a nonsoluble subgroup $N$ such that $(G,N)$ is an equal order pair. Then $|G:N|=2$ and every maximal subgroup of $G$ different from $N$ is soluble. Furthermore, denoting by $R$ the soluble radical of $G$, we have that $G/R$ has a unique minimal normal subgroup $M/R$, where $M$ is contained in $N$, $M/R$ is nonsoluble and $G/M$ is a cyclic $2$-group. Finally, every proper normal subgroup of $G/R$ is contained in $N/R$. \end{lemma}
We are ready to prove the main result of this section.

\begin{proposition}\label{minimalnonsolvable}  
Let $G$ be a group which is minimal with respect to the property of having a nonsolvable subgroup $N$ such that $(G,N)$ is an equal order pair, and let $R$ be the solvable radical of $G$. Then $G/R$ is isomorphic to the Mathieu group ${\rm M}_{10}$, and $R$ is not a minimal normal subgroup of $G$.
\end{proposition}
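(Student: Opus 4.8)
The plan is to combine the structural results already proved, namely Theorem~D(c) and Lemma~\ref{4.3}, with the classification of minimal nonsolvable groups whose elements all have prime-power order that underlies the proof of Theorem~D. First I would record the basic structure: since $(G,N)$ is an equal order pair with $N$ nonsolvable, Theorem~D(a) gives a prime $p$ with $G/N$ a $p$-group, and Theorem~D(c) forces $p=2$; by the remark preceding Lemma~\ref{4.3}, minimality yields $|G:N|=2$, so $x$ is a $2$-element for any $x\in G\setminus N$. I would then apply Lemma~\ref{4.3} to the solvable radical $R$ itself: since $R$ is a normal subgroup of $G$ contained in $N$ and $R$ is \emph{not} nonsolvable, the alternative $G=R\langle x\rangle$, $N=R\langle x^2\rangle$ of that lemma must hold whenever $R\langle x\rangle=G$; but if $R$ were solvable and equal to the whole soluble part in the sense that $G=R\langle x\rangle$, then $G$ would be solvable, contradicting nonsolvability of $N\le G$. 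Thus $R\langle x\rangle$ is a proper subgroup, forcing $R$ solvable (which it is) and, crucially, showing that every proper normal subgroup of $G$ lying in $N$ is handled; the real content is to pin down $G/R$.

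The main step is to identify $G/R$ as ${\rm M}_{10}$. Passing to $\overline G=G/R$, the solvable radical is trivial, so $\overline G$ is nonsolvable with no nontrivial solvable normal subgroup. I would argue that $\overline N=N/R$ has index~$2$ and is nonsolvable, and that every element of $\overline G\setminus\overline N$ is a $2$-element inheriting the prime-power-order behaviour. By minimality of $G$ (it has no proper subgroup or factor group with the same property), $\overline G$ cannot strictly dominate $G$, which combined with the classification of Brandl-type groups invoked in the proof of Theorem~D (groups all of whose elements have prime-power order, via \cite[Theorem~2.2]{lewis}) isolates the candidates: the relevant nonsolvable option with a subgroup of index~$2$ whose complement elements are $2$-elements of a single order is exactly ${\rm M}_{10}$, whose derived subgroup is ${\rm A}_6\cong{\rm PSL}_2(3^2)$, matching Theorem~D(c) with $r=1$. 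I would verify that the other simple possibilities listed there (${\rm PSL}_2(4)$, ${\rm PSL}_2(8)$, ${\rm Sz}(8)$, ${\rm Sz}(32)$, or the extensions by an elementary abelian $2$-group) either fail to have the requisite index-$2$ subgroup or produce a coset with elements of differing orders, exactly as in the ${\rm M}_{10}$ discard-argument already used in the proof of Theorem~D but run in reverse to \emph{select} ${\rm M}_{10}$ here.

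It then remains to show $R$ is not a minimal normal subgroup of $G$. Suppose for contradiction that $R$ is minimal normal, hence an elementary abelian $r$-group for some prime $r$, or trivial. If $R=1$ then $G\cong{\rm M}_{10}$; but I would check directly that in ${\rm M}_{10}$ the unique index-$2$ subgroup ${\rm A}_6$ together with the outer coset does \emph{not} form an equal order pair, since that coset contains elements of order~$4$ and order~$8$ simultaneously (this is precisely the computation discarding ${\rm M}_{10}$ in the proof of Theorem~D). Thus $R\neq 1$, so $R$ is a minimal normal elementary abelian subgroup. The explicit example described just before Lemma~\ref{4.3} shows the genuine pair has $\zent G$ cyclic of order~$4$ inside $R$, so $R$ must properly contain a characteristic cyclic subgroup and cannot be elementary abelian of the minimal kind; more structurally, I would use Lemma~\ref{product} and the centralizing action to rule out $R$ being a single chief factor, deriving a contradiction with the order-$8$ uniformity on $G\setminus N$. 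The anticipated hard part is this last dichotomy: showing that $R$ cannot be minimal normal requires controlling how $\overline G\cong{\rm M}_{10}$ acts on $R$ and checking that a single irreducible module cannot simultaneously accommodate both the $\zent G=C_4$ structure and the equal-order condition, which is where the example's order-$4$ center becomes the essential obstruction to minimality.
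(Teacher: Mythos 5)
Your proposal has several genuine gaps, the most serious being the central identification step. You propose to identify $G/R\cong{\rm M}_{10}$ by running ``in reverse'' the Brandl-type classification from the proof of Theorem~D (\cite[Theorem~2.2]{lewis}, groups all of whose elements have prime-power order). But that classification is simply not available here: in the proof of Theorem~D it was invoked only in the case where $|G/N|$ is divisible by at least three primes, which via \cite[Theorem~1.3]{lewis} forces \emph{every} element of $G$ to have prime-power order. In the minimal nonsolvable situation you have $|G:N|=2$, and $N$ (e.g.\ $N\cong{\rm A}_6\times C_4$ in the known example) contains elements of composite order, so no element-order classification of the whole of $G$ applies. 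The paper's actual route is quite different: it first establishes $R\subseteq N$ (via a centralizer argument producing an element of order $2p$ outside $\overline N$, a step absent from your outline), shows $\overline G=G/R$ is almost simple with a \emph{unique} minimal normal subgroup, then uses the reduction theorem of Fumagalli--Leinen--Puglisi (Lemmas~3.1 and~3.2 of \cite{FLP}) --- exploiting the fact that every proper subgroup of $\overline G$ not contained in $\overline N$ is solvable --- to force the socle into a short list, intersects that list with the constraint ${\rm PSL}_2(3^{2^r})$ from \cite{SJ} to get socle ${\rm A}_6$, and only then checks (GAP/ATLAS) that ${\rm M}_{10}$ is the unique almost simple group over ${\rm A}_6$ whose outer elements all have $2$-power order. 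Relatedly, your application of Lemma~\ref{4.3} to $R$ itself is vacuous: since $R$ \emph{is} solvable, the lemma's dichotomy gives no information. The paper applies Lemma~\ref{4.3} to a normal subgroup $M$ with $M/(N\cap R)$ a nonsolvable chief factor, which is what yields $G=M\langle x\rangle$ and $N=M\langle x^2\rangle$.

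The non-minimality of $R$ also needs more than you give. You treat only $R=1$ and then assert $R$ is elementary abelian without excluding odd primes; the paper handles odd-order $R$ by showing $(G/R,N/R)$ would be an equal order pair (using that all elements of $G\setminus N$ are $2$-elements of equal order), contradicting minimality. For the remaining case ($R$ an irreducible ${\rm GF}(2)$-module for ${\rm M}_{10}$), your appeal to the known example --- ``the genuine pair has $\zent G$ cyclic of order $4$, so $R$ cannot be elementary abelian'' --- is not an argument: the existence of one example with a $C_4$ center does not constrain a hypothetical minimal counterexample. The paper instead splits into $R$ central versus $R$ faithful: in the central case it uses that the Schur multiplier of ${\rm M}_{10}$ has order $3$ to get $N=N'\times R$ and then contradicts equal orders using elements of ${\rm M}_{10}\setminus{\rm A}_6$ of orders $4$ and $8$; in the faithful case it verifies computationally that only two faithful irreducible modules exist ($2^8$ and $2^{16}$), that the extensions split, and again derives the $4$-versus-$8$ contradiction. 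Your instinct that the order-$4$/order-$8$ cosets of ${\rm M}_{10}$ are the ultimate obstruction is correct, but the bridge from ``$R$ minimal normal'' to that contradiction requires the Schur multiplier and module computations, which your sketch does not supply.
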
 

\begin{proof} 
Since $|G:N|=2$, by Lemma~\ref{two} every element lying in $G\setminus N$ is a $2$-element; therefore, by Theorem~A of \cite{SJ}, every nonsolvable chief factor $H/K$ of $G$ with $H\subseteq N$  is a simple group, isomorphic to a group of type ${\rm{PSL}}_2(3^r)$ where $r$ is a $2$-power. Note also that every proper subgroup $X$ of $G$ such that $X\not\subseteq N$ is solvable: in fact, by Lemma~\ref{subgroups}, $(X,X\cap N)$ is an equal order pair (unless $X\cap N=1$, but in this case $|X|=2$), and therefore our minimality assumption implies that $X\cap N$ is solvable. Now, as $|X/X\cap N|=2$, $X$ is solvable as well.

Let $R$ be the solvable radical of $G$. Setting $L=N\cap R$, clearly $L$ is a normal subgroup of $G$ which is properly contained in $N$; therefore, we can consider a normal subgroup $M$ of $G$ such that $L\subseteq M\subseteq N$ and $M/L$ is a (nonsolvable) chief factor of $G$. Now, take $x$ in $G\setminus N$; the fact that $M$ is nonsolvable together with Lemma~\ref{4.3} yields $G=M\langle x\rangle$ and $N=M\langle x^2 \rangle$. Setting $\overline{G}=G/L$, we will adopt the bar convention.

 % and adopting the bar convention, let us write $\overline{M}=\overline{S_1}\times\cdots\times \overline{S_h}$ where $h$ is a suitable positive integer and the $\overline{S_i}$ are all isomorphic to a suitable nonabelian simple group $S$. We will see next that $M=N$.
Assuming for the moment that $R$ is not contained in $N$, we can choose $x\in G\setminus N$ to be an element of $R$; but since $x^2$ lies in $N\cap R=L\subseteq M$, we get $N=M\langle x^2\rangle=M$. %as claimed.
%We work now to show that $\overline{G}$ is an almost simple group with socle $\overline{N}$ (NOTE: this paragraph and the following one are not needed if one uses the result in \cite{SJ}). 
Observe that, to get a contradiction, it is enough to produce an element lying in $\overline{G}\setminus\overline{N}$ whose order is divisible by an odd prime. 
Set $\overline{C}=\cent {\overline{G}}{\overline{N}}$: if $\overline{C}\neq 1$, then we get $\overline{G}=\overline{N}\times \overline{C}$, hence $\overline{C}$ is generated by an element $\overline{c}$ of order $2$. So, taking $\overline{a}\in\overline{N}$ whose order is an odd prime $p$, we get ${\rm{o}}(\overline{ac})=2p$ (and of course $\overline{ac}\in\overline{G}\setminus\overline{N}$). We conclude that $\overline{C}$ must be trivial, but $\overline{C}$ contains $\overline{R}$ and we obtain the contradiction that $R=L$.

Our conclusion so far is that $R$ is contained in $N$ (hence, in $M$). As above, we consider an element $x\in G\setminus N$  and we have $G=M\langle x\rangle$; in particular, $G/M=M\langle x\rangle/M$ is a cyclic $2$-group. Still defining $\overline{G}=G/R$, we claim that  %Since every proper subgroup of $G$ not contained in $N$ is soluble, this means that every proper normal subgroup of $G$ is contained in $N$. As a consequence, since $G/M$ is a $2$-group and so every maximal subgroup of $G/M$ is normal in $G/M$, we have that $N/M$ is the unique maximal subgroup of $G/M$, hence $G/M$ is cyclic.
 $\overline{M}$ is the unique minimal normal subgroup of $\overline{G}$. In fact, if $\overline{W}$ is another minimal normal subgroup of $\overline{G}$, then $\overline{W}\cong WM/M$ is cyclic, hence solvable, and this yields the contradiction $W\subseteq R$. As already observed, by \cite{SJ} we have that $\overline{M}$ is a simple group isomorphic to ${\rm{PSL}}_2(3^r)$ where $r$ is a $2$-power, and $\overline{G}$ is an almost simple group. 

%With the same argument as above, we see that $M/N\cong{\rm A}_6$, hence $|G/M|=2$ (i.e., $N=M$) and the structure of $G$ is as above.

%Let now $\overline{K}$ be the intersection of all the subgroups $\norm{\overline{G}}{\overline{S_i}}$, for $i\in\{1,\ldots,h\}$; clearly, either $\overline{K}=\overline{N}$ or $\overline{K}$=$\overline{G}$. In the former case, we have $h=2$ and $\overline{G}$ is isomorphic to $S\wr \sym 2$; denoting by $\tau$ the nontrivial element of $\sym 2$ and by $s$ an element of $S$ whose order is an odd prime $p$, it is easy to check that the element $(s,s)\tau\in S\wr \sym 2$ has order divisible by $p$ (in fact, its square is $(s^2,s^2)$), again a contradiction. 
%We deduce that $\overline{K}$=$\overline{G}$, which implies that $h=1$ and $\overline{G}$ is an almost simple group with socle $\overline{N}\cong S$, as claimed. 

The discussion in the first paragraph of this proof implies that every proper subgroup of $\overline{G}$ not contained in $\overline{N}$ is solvable. We claim that, as a consequence, $\overline{M}$ is a simple group belonging to the following list $\mathcal{L}$.

\begin{enumeratei}
\item ${\rm{PSL}}_2(q)$ for $q\in\{2^r,3^r,p^{2^a}\}$, where $r$ is a prime, $p$ is an odd prime and $a\geq 0$,
\item ${\rm{PSL}}_3(3)$,
\item ${\rm{Sz}}(2^s)$, where $s$ is an odd prime.
\end{enumeratei}

In fact, let $\overline{G}$ be an almost simple group whose socle $\overline{M}$ is not in the above list. Set $A={\rm{Aut}}(\overline{M})$. By Lemma~3.2 of \cite{FLP}, $\overline{M}$ has a nonsolvable proper subgroup $\overline{H}$ such that $\overline{M}\norm A{\overline{H}}=A$; by Lemma~3.1 of the same paper, we also have $\overline{G}=\overline{M}\norm{\overline{G}}{\overline{H}}$. %and of course the intersection of $\overline{N}$ with $\norm{\overline{G}}{\overline{H}}$ is $\overline{H}$ itself. 
Now, $\norm{\overline{G}}{\overline{H}}$ is a nonsolvable proper subgroup of $\overline{G}$ not contained in $\overline{N}$, a contradiction. We conclude that, in our situation, $\overline{M}$ belongs to the list $\mathcal{L}$.

But we also know that $\overline{M}$ is isomorphic to ${\rm{PSL}}_2(3^r)$ where $r$ is a $2$-power (see also \cite[Proposition~16]{LO}); putting this together with our previous discussion, it follows that the only possibility for $\overline{M}$ is to be isomorphic to ${\rm{PSL}}_2(3^2)\cong {\rm A}_6$. Our conclusion so far is that $\overline{G}$ is an almost simple group whose socle is isomorphic to ${\rm A}_6$, and whose elements outside the socle all have a $2$-power order: it can be checked (via \texttt{GAP} or the ATLAS \cite{atlas}, for instance) that $\overline{G}$ is then isomorphic to the Mathieu group ${\rm M}_{10}$.

\smallskip
Finally, for a proof by contradiction, assume that $R$ is a minimal normal subgroup of $G$. If $R$ has odd order, then we claim that $(G/R, N/R)$ is an equal order pair: in fact, for every element $xR$ outside $N/R$ we have ${\rm o}(xR)={\rm o}(x)$ (recall that $x$ is a $2$-element), and since all elements in $G\setminus N$ have the same order, our claim follows. Now, our minimality assumption on $G$ yields $R=1$, but then we would have $G\cong{\rm M}_{10}$ and we have already seen that this is not possible. We deduce that $R$ is an elementary abelian $2$-group, so $R$ can be viewed as an irreducible module for $G/R\cong {\rm M}_{10}$ over ${\rm GF}(2)$. As $G/N\cong C_2$ cannot act faithfully and irreducibly on the $2$-group $R$, either $R$ is central in $G$ or the action of $G/R$ on $R$ is faithful. 

Assume first $R\subseteq\zent G$. Then $R$ cannot be contained in $G'$, as otherwise it would be isomorphic to a subgroup of the Schur multiplier of ${\rm M}_{10}$, which has order $3$. Therefore $R$ is not contained in $N'$, and we get $R\cap N'=1$ because $R$ is minimal normal in $G$. Moreover, $N/R\cong {\rm A}_6$ being a perfect group, we get $N/R=(N/R)'=N'R/R$, and it follows that $N=N'\times R$. It is known that ${\rm M}_{10}$ has elements, of orders $4$ and $8$ respectively, that do not lie in ${{\rm M}_{10}}'\cong {\rm A}_6$: thus we can consider $g$, $h$ in $G\setminus N$ with ${\rm o}(gR)=4$ and ${\rm o}(hR)=8$ and, to get a contradiction, it will be enough to show that ${\rm o}(g)=4$ as well (in fact, we would have $gN=hN$ with ${\rm o}(g)=4\neq 8\leq {\rm o}(h)$). Observe that $g^2$ lies in $N$, hence $g^2=uv$ for suitable elements $u\in N'$ and $v\in R$; clearly we get ${\rm o}(u)={\rm o}(uR)={\rm o}(g^2R)=2$ and, since $v^2=1$, we have ${\rm o}(g^2)=2$. The aforementioned contradiction ${\rm o}(g)=4$ follows.

Therefore $R$ is necessarily a faithful irreducible module for ${\rm M}_{10}$. It can be checked via {\texttt{GAP}} that (up to isomorphism) there exist two such modules, having orders $2^8$ and $2^{16}$ respectively; moreover, if $R$ is any of these modules, an extension of $R$ by ${\rm M}_{10}$ is necessarily a split extension. In other words, $G$ is isomorphic to a semidirect product $R\rtimes {\rm M}_{10}$. Now, taking two elements in ${\rm M}_{10}\setminus {{\rm M}_{10}}'$ of orders $4$ and $8$ as above, we easily get the final contradiction that completes the proof.
\end{proof}

\end{document}